\newtheorem{theorem}{Theorem}[section]
\newtheorem{lemma}{Lemma}[section]
\providecommand{\customgenericname}{}
\newcommand{\newcustomproblem}[2]{%
	\newenvironment{#1}[1]
	{%
		\renewcommand\customgenericname{#2}%
		\renewcommand\theinnercustomgeneric{##1}%
		\innercustomgeneric
	}
	{\endinnercustomgeneric}
}
\newcommand*{\bqed}{\hfill\ensuremath{\blacksquare}}%
\def\dd{\, \mathrm{d}}
\begin{document}
	
	\today
	
	
	\title[An obstacle problem for generalised membranes]{On the justification of Koiter's model for generalised membrane shells of the first kind confined in a half-space}
	
	
	\author{Paolo Piersanti}
	\address{School of Science and Engineering, The Chinese University of Hong Kong (Shenzhen), 2001 Longxiang Blvd., Longgang District, Shenzhen, China}
	\email{ppiersanti@cuhk.edu.cn}

	\begin{abstract}
		In this paper we justify Koiter's model for linearly elastic generalised membrane shells of the first kind subjected to remaining confined in a prescribed half-space. After showing that the confinement condition considered in this paper is equivalent, under the validity of the Kirchhoff-Love assumptions, to the classical Signorini condition, we formulate the corresponding obstacle problem for a three-dimensional linearly elastic generalised membrane shell of the first kind, and we perform a rigorous asymptotic analysis as the thickness approaches to zero on the unique solution for one such model. We show - and this constitutes the first novelty in this paper - that the solution to the three-dimensional obstacle problem converges to the unique solution of a two-dimensional model consisting of a set of variational inequalities that are posed over the abstract completion of a non-empty, closed and convex set. The two-dimensional limit model with Koiter's model as a point of departure is characterised by the same variational inequalities as the two-dimensional model obtained with the variational formulation of a three-dimensional linearly elastic generalised membrane shell subjected to remaining confined in a prescribed half-space as a point of departure, with the noticeable difference that the sets where solutions for these two-dimensional limit models are sought do not coincide, in general.
		
		In order to complete the justification of Koiter's model for linearly elastic generalised membrane shells of the first kind subjected to remaining confined in a half-space, we give sufficient conditions ensuring that the sets where solutions for these two-dimensional limit models are sought coincide. The latter constitutes the second main novelty presented in this work.
		
		\smallskip
		\noindent \textbf{Keywords.} Variational Inequalities $\cdot$ Generalised Membrane Shells $\cdot$ Asymptotic Analysis $\cdot$ Obstacle Problems
		
		\smallskip
		\noindent \textbf{MSC 2020.} 35J86, 47H07, 74B05.
	\end{abstract}
	
	\maketitle
	
	\section{Introduction}
\label{Sec:0}

The contact of this elastic films with rigid foundations arises in many applicative fields such as Engineering, Material Science, Medicine and Biology. For instance, the description of the motion inside the human heart of the three aorta valves, which can be regarded as linearly elastic shells, is governed by a mathematical model built up in a way such that each valve remains confined in a certain portion of space without penetrating or being penetrated by the other two valves. In this direction we cite the recent references~\cite{Quarteroni2021-3}, \cite{Quarteroni2021-2} and~\cite{Quarteroni2021-1}.

The geometrical deformation of a linearly elastic shell is modelled, in general, via the classical three-dimensional energy for linearised elasticity (cf., e.g., \cite{Ciarlet1988}). However, this model has some drawbacks, as it cannot be used to study models where the shell under consideration is made of a non-homogeneous or anisotropic material (cf., e.g., \cite{CaillerieSanchez1995a} and~\cite{CaillerieSanchez1995b}), or its thickness varies periodically (cf., e.g., \cite{TelegaLewinski1998a} and~\cite{TelegaLewinski1998b}).

To overcome the intrinsic limitations of this model, W.T.~Koiter devised, in the papers~\cite{Koiter,Koiter1970}, an alternative model that was meant to provide information on the deformation of a linearly elastic shell by solely considering the deformation of its middle surface. Another important feature of Koiter's model is that it is valid for all types of linearly elastic shells.

The rigorous mathematical justification of Koiter's model for linearly elastic shells in the case where no obstacles are taken into account was established by P.G.~Ciarlet and his collaborators in the papers~\cite{CiaDes1979, CiaLods1996a,CiaLods1996b,CiaLods1996c,CiaLods1996d,CiaLodsMia1996}. We also mention the papers~\cite{MeiPie2024,PPS2024,PPS2024-2,PS,Shen2018,Shen2019,Shen2020}, which are about the numerical analysis and simulations of problems arising in shell theory. For the justification of the time-dependent Koiter's model when the action of temperature is considered and no obstacles are taken into account, we refer to~\cite{Pie-2021}. 

The study of obstacle problems for thin elastic films was initiated by A.~L\'{e}ger and B.~Miara in the papers~\cite{LegMia2008,LegMia2018}, where the authors focused on linearly elastic shallow shells and linearly elastic flexural shells subjected to a constraint for the transverse component of the deformation, imposing that the deformed reference configuration must remain confined in a prescribed half-space.

The papers~\cite{CiaPie2018b,CiaPie2018bCR,CiaMarPie2018b,CiaMarPie2018,PieJDE2022,Pie2023} consider a more general confinement condition, as they discuss the justification of Koiter's model in the case where the shell, regardless of whether it is an elliptic membrane or a flexural shell, remains confined in a general half-space. The confinement condition considered in the aforementioned papers is different from the usual Signorini condition that is favoured in many textbooks on contact mechanics (cf., e.g.,~\cite{KikuchiOden1988}). Indeed, the Signorini condition solely requires the boundary of the deformed three-dimensional elastic body to obey the constraint whereas the confinement condition we will be considering applies to \emph{all the points} of the deformed reference configuration. The latter approach seems to be more amenable in the context of a dimension-reduction analysis, where we expect the two-dimensional limit model, defined over a surface in the Euclidean space, to obey the constraint at all the points of one such surface.

The papers~\cite{CiaMarPie2018b,CiaMarPie2018,PieJDE2022} discuss the identification of a two-dimensional limit model obtained as a result of a rigorous asymptotic analysis as the thickness approaches zero departing from the three-dimensional obstacle problem for a linearly elastic elliptic membrane shell constrained to remain confined in a prescribed half-space. The latter category of shells is characterised by the fact that the Gaussian curvature of the middle surface is positive at each point of the definition domain, and by the fact that the displacement vanishes on the entire lateral face. The boundary conditions characterising linearly elastic elliptic membrane shells allow to bypass all the analytical difficulties and technicalities that are instead faced when considering linearly elastic generalised membrane shells of the first kind.
The paper~\cite{Pie2023} discusses the identification of a two-dimensional limit model obtained as a result of a rigorous asymptotic analysis as the thickness approaches zero departing from the three-dimensional obstacle problem for a linearly elastic flexural shell constrained to remain confined in a prescribed half-space. In this case, the space of solutions for the model under consideration is characterised by the fact that admissible displacements are \emph{inextensional}, in the sense that the linearised change of metric tensor at any of these admissible displacements vanishes. This feature of admissible displacements renders the analysis challenging, and requires the development of a new strategy based on the penalty method and on the formulation of a uniformity property (Theorem~3.3 in~\cite{Pie2023}) stating that the convergence process as the penalty parameter approaches zero is independent of the thickness under the assumption of higher regularity for the solution of the three-dimensional obstacle problem.
The papers~\cite{CiaPie2018b,CiaPie2018bCR} complement the results obtained in~\cite{CiaMarPie2018b,CiaMarPie2018,PieJDE2022,Pie2023} by verifying that the two-dimensional limit models obtained as a result of a rigorous asymptotic analysis as the thickness approaches zero departing from Koiter's model for shells subjected to remaining confined in a half-space coincide with the two-dimensional limit models obtained departing from the three-dimensional obstacle problem for either linearly elastic elliptic membrane shells or linearly elastic flexural shells constrained to remain confined in the \emph{same} half-space as Koiter's model. The case of generalised membrane shells of the first kind is also analysed, though at the time the papers~\cite{CiaPie2018b,CiaPie2018bCR} were published there was no record in the literature concerning the identification of a two-dimensional model  obtained as a result of a rigorous asymptotic analysis as the thickness approaches zero departing from the three-dimensional obstacle problem for a linearly elastic generalised membrane shell of the first kind. The purpose of this paper is to complete the study initiated in~\cite{CiaPie2018b,CiaPie2018bCR,CiaMarPie2018b,CiaMarPie2018,PieJDE2022,Pie2023}, by showing that the two-dimensional limit model obtained departing from the three-dimensional obstacle problem for a linearly elastic generalised membrane shell of the first kind subjected to remaining confined in a half-space coincides with the two-dimensional limit model obtained departing from Koiter's model~\cite{CiaPie2018b,CiaPie2018bCR}.

To the best of our knowledge, the justification of Koiter's model for linearly elastic generalised membrane shells appears to be an open problem, to-date. In order to achieve this goal, we will first perform a rigorous asymptotic analysis starting with the classical energy of three-dimensional linearised elasticity for which the admissible competitors to the role of minimiser are sought in a non-empty, closed, and convex subspace of a suitable space, that takes into account the constraint according to which the shell must remain confined in a prescribed half-space.

We will recover the same two-dimensional limit-model as the one recovered in~\cite{CiaPie2018b}, although the set of admissible solutions for the two-dimensional limit-model recovered here is in general larger than the set identified in~\cite{CiaPie2018b}. We then identify reasonable sufficient conditions for which the two sets coincide. The latter property is established via a \emph{density argument}.

We will also show in passing that the confinement condition we are considering is actually \emph{equivalent} to the Signorini condition, provided that the Kirchhoff-Love assumptions are hypothesised.

For completeness, we also mention the recent papers~\cite{PWDT2D} and~\cite{PWDT3D}, which provide examples of non-linear variational models arising in biology, as well as the paper~\cite{PieTem2023}, that discusses the modelling and analysis of the melting of a shallow ice sheet by means of a set of doubly non-linear parabolic variational inequalities. We also mention the recent works~\cite{HeaNai24, LK24} that explore new ideas in contact problems in non-linear elasticity.

The paper is divided into six sections (including this one). In section~\ref{Sec:1} we recall some background and notation. In section~\ref{Sec:2} we formulate a three-dimensional obstacle problem for a general linearly elastic shell. In section~\ref{Sec:3} we, more specifically, consider the case where the linearly elastic shell under consideration is a linearly elastic generalised membrane of the first kind. We then scale the three-dimensional obstacle problem in a way such that it is posed over a domain that is independent of the thickness parameter. In section~\ref{Sec:4}, a rigorous asymptotic analysis is carried out and the sought set of \emph{abstract} two-dimensional variational inequalities is recovered. We also observe that the concept of solution for the two-dimensional limit model recovered as a result of the previous asymptotic analysis is different from the concept of solution for the two-dimensional limit model recovered upon completion of a rigorous asymptotic analysis starting with Koiter's model subjected to the same confinement condition as the one considered in this paper, even though the \emph{abstract} variational inequalities are the same.
In section~\ref{Sec:5}, we give sufficient conditions ensuring that the concept of solution for the two-dimensional limit model recovered starting with the three-dimensional obstacle problem for linearly elastic generalised membrane shells of the first kind and the concept of solution for the same two-dimensional limit model recovered starting with Koiter's model subjected to the same confinement condition as the one considered in this paper are actually identical.

The main results presented in this paper are Theorem~\ref{asymptotics} and Theorem~\ref{th:density}. In Theorem~\ref{asymptotics}, we identify the two-dimensional limit model obtained as a result of a rigorous asymptotic analysis as the thickness approaches zero departing from the three-dimensional obstacle problem for a linearly elastic generalised membrane of the first kind introduced in section~\ref{Sec:2} and section~\ref{Sec:3}. In Theorem~\ref{th:density} we establish a density result that unifies the result obtained in Theorem~\ref{asymptotics} with those in~\cite{CiaPie2018b,CiaPie2018bCR}, thus completing the justification of Koiter's model for linearly elastic generalised shells of the first kind subjected to remaining confined in a half-space.

\section{Geometrical preliminaries}
\label{Sec:1}

For details about the classical notions of differential geometry used in this section and the next one, see, e.g.~\cite{Ciarlet2000} or \cite{Ciarlet2005}.

Greek indices and exponents, except $\varepsilon$ and $\nu$ in $\partial_{\nu}$, take their values in the set $\{1,2\}$, while Latin indices or exponents, except when they are used for indexing sequences, take their values in the set $\{1,2,3\}$, and the summation convention with respect to repeated indices or exponents is systematically used in conjunction with these two rules. The notation $\mathbb{E}^3$ designates the three-dimensional Euclidean space. The Euclidean inner product and the vector product of $\bm{u}, \bm{v} \in \mathbb{E}^3$ are denoted $\bm{u} \cdot \bm{v}$ and $\bm{u} \wedge \bm{v}$, respectively. The Euclidean norm of $\bm{u} \in \mathbb{E}^3$ is denoted $|\bm{u}|$. The notation $\delta^j_i$ designates the Kronecker symbol.

Given an open subset $\Omega$ of $\mathbb{R}^n$, the notations $L^2(\Omega)$, $H^1(\Omega)$, or $H^2 (\Omega)$, denote the standard Lebesgue and Sobolev spaces, and the notation $\mathcal{D}(\Omega)$ denotes the space of all functions that are infinitely differentiable over $\Omega$ and have compact supports in $\Omega$. The notation $\|\cdot\|_X$ designates the norm of a normed vector space $X$. Given a vector space $X$, the corresponding product space $X \times X \times X$ is denoted by $\bm{X}$.

The abbreviations \emph{``a.a.''} and \emph{``a.e.''} mean \emph{almost all} and \emph{almost everywhere}, respectively.

The boundary $\Gamma$ of an open subset $\Omega$ in $\mathbb{R}^n$ is said to be Lipschitz-continuous if the following conditions are satisfied: Given an integer $s\ge 1$, there exist constants $\alpha_1>0$ and $L>0$, and a finite number of local coordinate systems, with coordinates $\bm{\phi}'_r=(\phi_1^r, \dots, \phi_{n-1}^r) \in \mathbb{R}^{n-1}$ and $\phi_r=\phi_n^r$, sets $\tilde{\omega}_r:=\{\bm{\phi}_r \in\mathbb{R}^{n-1}; |\bm{\phi}_r|<\alpha_1\}$, $1 \le r \le s$, and corresponding functions
$$
\tilde{\theta}_r:\tilde{\omega}_r\to\mathbb{R},
$$
such that
$$
\Gamma=\bigcup_{r=1}^s \{(\bm{\phi}'_r,\phi_r); \bm{\phi}'_r \in \tilde{\omega}_r \textup{ and }\phi_r=\tilde{\theta}_r(\bm{\phi}'_r)\},
$$
and 
$$
|\tilde{\theta}_r(\bm{\phi}'_r)-\tilde{\theta}_r(\bm{\upsilon}'_r)|\le L |\bm{\phi}'_r-\bm{\upsilon}'_r|, \textup{ for all }\bm{\phi}'_r, \bm{\upsilon}'_r \in \tilde{\omega}_r, \textup{ and all }1\le r\le s.
$$

The second last formula takes into account overlapping local charts, while the last set of inequalities express the Lipschitz continuity of the mappings $\tilde{\theta}_r$.

An open set $\Omega$ is said to be locally on the same side of its boundary if, in addition, there exists a constant $\alpha_2>0$ such that
\begin{align*}
	\{(\bm{\phi}'_r,\phi_r);\bm{\phi}'_r \in\tilde{\omega}_r \textup{ and }\tilde{\theta}_r(\bm{\phi}'_r) < \phi_r < \tilde{\theta}_r(\bm{\phi}'_r)+\alpha_2\}&\subset \Omega,\quad\textup{ for all } 1\le r\le s,\\
	\{(\bm{\phi}'_r,\phi_r);\bm{\phi}'_r \in\tilde{\omega}_r \textup{ and }\tilde{\theta}_r(\bm{\phi}'_r)-\alpha_2 < \phi_r < \tilde{\theta}_r(\bm{\phi}'_r)\}&\subset \mathbb{R}^n\setminus\overline{\Omega},\quad\textup{ for all } 1\le r\le s.
\end{align*}

A \emph{domain} in $\mathbb{R}^n$ is a bounded Lipschitz domain, namely, a bounded and connected open subset $\Omega$ of $\mathbb{R}^n$, whose boundary $\partial \Omega$ is Lipschitz-continuous, the set $\Omega$ being locally on a single side of $\partial \Omega$. We denote by $\partial_{\nu}$ the normal derivative along the boundary $\partial\Omega$. For more details, see, e.g. Section~8.2 in~\cite{Ciarlet2025}.

Let $\omega$ be a domain in $\mathbb{R}^2$, let $y = (y_\alpha)$ denote a generic point in $\overline{\omega}$, and let $\partial_\alpha := \partial / \partial y_\alpha$ and $\partial_{\alpha\beta} := \partial^2/\partial y_\alpha \partial y_\beta$. A mapping $\bm{\theta} \in \mathcal{C}^1(\overline{\omega}; \mathbb{E}^3)$ is an \emph{immersion} if the two vectors
$$
\bm{a}_\alpha(y) := \partial_\alpha \bm{\theta}(y)
$$
are linearly independent at each point $y \in \overline{\omega}$. Then the image $\bm{\theta}(\overline{\omega})$ of the set $\overline{\omega}$ under the mapping $\bm{\theta}$ is a \emph{surface in} $\mathbb{E}^3$, equipped with $y_1, y_2$ as its \emph{curvilinear coordinates}. Given any point $y\in \overline{\omega}$, the vectors $\bm{a}_\alpha(y)$ span the \emph{tangent plane} to the surface $\bm{\theta}(\overline{\omega})$ at the point $\bm{\theta}(y)$, the unit vector
$$
\bm{a}_3(y) := \frac{\bm{a}_1(y) \wedge \bm{a}_2(y)}{|\bm{a}_1(y) \wedge \bm{a}_2(y)|}
$$
is normal to $\bm{\theta}(\overline{\omega})$ at $\bm{\theta} (y)$, the three vectors $\bm{a}_i(y)$ form the \emph{covariant} basis at $\bm{\theta}(y)$, and the three vectors $\bm{a}^j(y)$ defined by the relations
$$
\bm{a}^j(y) \cdot \bm{a}_i(y) = \delta^j_i
$$
form the \emph{contravariant} basis at $\bm{\theta}(y)$; note that the vectors $\bm{a}^\beta (y)$ also span the tangent plane to $\bm{\theta}(\overline{\omega})$ at $\bm{\theta}(y)$ and that $\bm{a}^3(y) = \bm{a}_3(y)$.

The \emph{first fundamental form} of the surface $\bm{\theta} (\overline{\omega})$ is then defined by means of its \emph{covariant components}
$$
a_{\alpha\beta} := \bm{a}_\alpha \cdot \bm{a}_\beta = a_{\beta\alpha} \in \mathcal{C}^0(\overline{\omega}),
$$
or by means of its \emph{contravariant components}
$$
a^{\alpha\beta}:= \bm{a}^\alpha \cdot \bm{a}^\beta = a^{\beta\alpha}\in \mathcal{C}^0(\overline{\omega}).
$$

Note that the symmetric matrix field $(a^{\alpha\beta})$ is then the inverse of the matrix field $(a_{\alpha\beta})$, that $\bm{a}^\beta = a^{\alpha\beta}\bm{a}_\alpha$ and $\bm{a}_\alpha = a_{\alpha\beta} \bm{a}^\beta$, and that the \emph{area element} along $\bm{\theta}(\overline{\omega})$ is given at each point $\bm{\theta}(y)$, for each $y \in \overline{\omega}$, by $\sqrt{a(y)}\dd y$, where
$$
a := \det(a_{\alpha\beta}) \in \mathcal{C}^0(\overline{\omega}).
$$

Given an immersion $\bm{\theta} \in \mathcal{C}^2(\overline{\omega}; \mathbb{E}^3)$, the \emph{second fundamental form} of the surface $\bm{\theta}(\overline{\omega})$ is defined by means of its \emph{covariant components}
$$
b_{\alpha\beta}:= \partial_\alpha \bm{a}_\beta \cdot \bm{a}_3 = -\bm{a}_\beta \cdot \partial_\alpha \bm{a}_3 = b_{\beta\alpha} \in \mathcal{C}^0(\overline{\omega}),
$$
or by means of its \emph{mixed components}
$$
b^\beta_\alpha := a^{\beta\sigma} b_{\alpha \sigma} \in \mathcal{C}^0(\overline{\omega}),
$$
and the \emph{Christoffel symbols} associated with the immersion $\bm{\theta}$ are defined by
$$
\Gamma^\sigma_{\alpha\beta}:= \partial_\alpha \bm{a}_\beta \cdot \bm{a}^\sigma = \Gamma^\sigma_{\beta\alpha} \in \mathcal{C}^0(\overline{\omega}).
$$

Given an injective immersion $\bm{\theta} \in \mathcal{C}^2 (\overline{\omega}; \mathbb{E}^3)$ and a vector field $\bm{\eta} = (\eta_i) \in \mathcal{C}^1(\overline{\omega};\mathbb{R}^3)$, the vector field
$$
\tilde{\bm{\eta}} := \eta_i \bm{a}^i
$$
can be viewed as a \emph{displacement field of the surface} $\bm{\theta}(\overline{\omega})$, thus defined by means of its \emph{covariant components} $\eta_i$ over the vectors $\bm{a}^i$ of the contravariant bases along the surface. If the norms $\|\eta_i\|_{\mathcal{C}^1(\overline{\omega})}$ are small enough, the mapping $(\bm{\theta} + \eta_i \bm{a}^i) \in \mathcal{C}^1(\overline{\omega}; \mathbb{E}^3)$ is also an injective immersion, so that the set $(\bm{\theta} + \eta_i \bm{a}^i) (\overline{\omega})$ is also a surface in $\mathbb{E}^3$, equipped with the same curvilinear coordinates as those of the surface $\bm{\theta} (\overline{\omega})$, called the \emph{deformed surface} corresponding to the displacement field $\tilde{\bm{\eta}} = \eta_i \bm{a}^i$. One can then define the first fundamental form of the deformed surface by means of its covariant components
$$
a_{\alpha\beta}(\bm{\eta}):= (\bm{a}_\alpha + \partial_\alpha \tilde{\bm{\eta}}) \cdot (\bm{a}_\beta + \partial_\beta \tilde{\bm{\eta}})= a_{\alpha\beta} + \bm{a}_\alpha \cdot \partial_\beta \tilde{\bm{\eta}} + \partial_\alpha \tilde{\bm{\eta}} \cdot \bm{a}_\beta + \partial_\alpha \tilde{\bm{\eta}} \cdot \partial_\beta \tilde{\bm{\eta}}.
$$

The \emph{linear part with  respect to} $\tilde{\bm{\eta}}$ in the difference $\frac{1}{2}(a_{\alpha\beta}(\bm{\eta}) - a_{\alpha\beta})$ is called the \emph{linearised change of metric tensor} associated with the displacement field $\eta_i \bm{a}^i$, the covariant components of which are defined by:
$$
\gamma_{\alpha\beta}(\bm{\eta}) := \dfrac{1}{2} \left(\bm{a}_\alpha \cdot \partial_\beta \tilde{\bm{\eta}} + \partial_\alpha \tilde{\bm{\eta}} \cdot \bm{a}_\beta\right) = \dfrac{1}{2}(\partial_\beta \eta_\alpha + \partial_\alpha \eta_\beta ) - \Gamma^\sigma_{\alpha\beta} \eta_\sigma - b_{\alpha\beta} \eta_3 = \gamma_{\beta\alpha} (\bm{\eta}).
$$

The \emph{linear part with  respect to} $\tilde{\bm{\eta}}$ in the difference $(b_{\alpha\beta}(\bm{\eta}) - b_{\alpha\beta})$ is called the \emph{linearised change of curvature tensor} associated with the displacement field $\eta_i \bm{a}^i$, the covariant components of which are formally defined by:
\begin{align*}
\rho_{\alpha\beta}(\bm{\eta}) &:= (\partial_{\alpha\beta}\tilde{\bm{\eta}}-\Gamma_{\alpha\beta}^\sigma \partial_\sigma \tilde{\bm{\eta}})\cdot \bm{a}_3 = \rho_{\beta\alpha}(\bm{\eta})\\
&= \partial_{\alpha\beta}\eta_3 -\Gamma_{\alpha\beta}^\sigma \partial_\sigma\eta_3 -b_\alpha^\sigma b_{\sigma\beta}\eta_3
+b_\alpha^\sigma(\partial_\beta \eta_\sigma-\Gamma_{\beta\sigma}^\tau\eta_\tau)+b_\beta^\tau(\partial_\alpha\eta_\tau-\Gamma_{\alpha\tau}^\sigma\eta_\sigma)
+(\partial_\alpha b_\beta^\tau+\Gamma_{\alpha\sigma}^\tau b_\beta^\sigma-\Gamma_{\alpha\beta}^\sigma b_\sigma^\tau)\eta_\tau.
\end{align*}

\section{A general three-dimensional obstacle problem for a linearly elastic shell} \label{Sec:2}

Let $\omega$ be a domain in $\mathbb{R}^2$, let $\gamma:= \partial \omega$, and let $\gamma_0$ be a non-empty relatively open subset of $\gamma$. For each $\varepsilon > 0$, we define the sets
$$
\Omega^\varepsilon := \omega \times (- \varepsilon , \varepsilon) \quad \textup{ and } \quad \Gamma^\varepsilon_0 := \gamma_0 \times [-\varepsilon,\varepsilon],
$$
we let $x^\varepsilon = (x^\varepsilon_i)$ designate a generic point in the set $\overline{\Omega^\varepsilon}$, and we let $\partial^\varepsilon_i := \partial / \partial x^\varepsilon_i$. Hence we also have $x^\varepsilon_\alpha = y_\alpha$ and $\partial^\varepsilon_\alpha = \partial_\alpha$.

Given an \emph{injective} immersion $\bm{\theta} \in \mathcal{C}^3(\overline{\omega}; \mathbb{E}^3)$ and $\varepsilon > 0$, consider a \emph{shell} with \emph{middle surface} $\bm{\theta} (\overline{\omega})$ and with \emph{constant thickness} $2 \varepsilon$. This means that the \emph{reference configuration} of the shell is the set $\bm{\Theta}^\varepsilon(\overline{\Omega^\varepsilon})$, where the mapping $\bm{\Theta}^\varepsilon:\overline{\Omega^\varepsilon} \to \mathbb{E}^3$ is defined by
$$
\bm{\Theta}^\varepsilon(x^\varepsilon) := \bm{\theta}(y) + x^\varepsilon_3 \bm{a}^3(y), \quad \textup{ for all } x^\varepsilon = (y,x^\varepsilon_3) \in \overline{\Omega^\varepsilon}.
$$

One can then show (cf., e.g., Theorem 3.1-1 of~\cite{Ciarlet2000}) that, if $\varepsilon > 0$ is small enough, such a mapping $\bm{\Theta}^\varepsilon \in \mathcal{C}^2(\overline{\Omega^\varepsilon}; \mathbb{E}^3)$ is also an injective immersion, in the sense that the three vectors
$$
\bm{g}^\varepsilon_i(x^\varepsilon) := \partial^\varepsilon_i \bm{\Theta}^\varepsilon(x^\varepsilon),
$$
are linearly independent at each point $x^\varepsilon \in \overline{\Omega^\varepsilon}$. The vectors $\{\bm{g}^\varepsilon_i\}_{i=1}^3$ constitute the \emph{covariant basis} at the point $\bm{\Theta}^\varepsilon(x^\varepsilon)$, while the three vectors $\bm{g}^{j,\varepsilon}(x^\varepsilon)$ defined by the relations
$$
\bm{g}^{j,\varepsilon}(x^\varepsilon) \cdot \bm{g}^\varepsilon_i(x^\varepsilon) = \delta^j_i,
$$
constitute the \emph{contravariant basis} at the same point. It will be implicitly assumed in the sequel that $\varepsilon > 0$ \emph{is small enough so that $\bm{\Theta}^\varepsilon:\overline{\Omega^\varepsilon} \to \mathbb{E}^3$} is an injective immersion.

One then defines the \emph{metric tensor of the reference configuration} $\bm{\Theta}^\varepsilon(\overline{\Omega^\varepsilon})$ by means of its \emph{covariant components}
$$
g^\varepsilon_{ij} := \bm{g}^\varepsilon_i \cdot \bm{g}^\varepsilon_j \in \mathcal{C}^1(\overline{\Omega^\varepsilon}),
$$
or by means of its \emph{contravariant components}
$$
g^{ij,\varepsilon} := \bm{g}^{i,\varepsilon} \cdot \bm{g}^{j,\varepsilon} \in \mathcal{C}^1(\overline{\Omega^\varepsilon}).
$$

Note that the symmetric matrix field $(g^{ij,\varepsilon})$ is then the inverse of the matrix field $(g^\varepsilon_{ij})$, that $\bm{g}^{j,\varepsilon} = g^{ij,\varepsilon} \bm{g}^\varepsilon_i$ and $\bm{g}^\varepsilon_i = g^\varepsilon_{ij} \bm{g}^{j,\varepsilon}$, and that the \emph{volume element} in $\bm{\Theta}^\varepsilon(\overline{\Omega^\varepsilon})$ is given at each point $\bm{\Theta}^\varepsilon(x^\varepsilon)$, for all $x^\varepsilon \in \overline{\Omega^\varepsilon}$, by $\sqrt{g^\varepsilon (x^\varepsilon)} \dd x^\varepsilon$, where
$$
g^\varepsilon := \det (g^\varepsilon_{ij}) \in \mathcal{C}^1(\overline{\Omega^\varepsilon}).
$$

One also defines the \emph{Christoffel symbols} associated with the immersion $\bm{\Theta}^\varepsilon$ by
$$
\Gamma^{p,\varepsilon}_{ij}:= \partial_i^\varepsilon \bm{g}^\varepsilon_j \cdot \bm{g}^{p,\varepsilon} = \Gamma^{p,\varepsilon}_{ji} \in \mathcal{C}^0(\overline{\Omega^\varepsilon}).
$$
Note that $\Gamma^{3,\varepsilon}_{\alpha 3} = \Gamma^{p, \varepsilon}_{33} = 0$.

Given a vector field $\bm{v}^\varepsilon = (v^\varepsilon_i) \in \mathcal{C}^1(\overline{\Omega^\varepsilon};\mathbb{R}^3)$, the associated vector field
$$
\tilde{\bm{v}}^\varepsilon := v^\varepsilon_i \bm{g}^{i,\varepsilon},
$$
can be viewed as a \emph{displacement field} of the reference configuration $\bm{\Theta}^\varepsilon(\overline{\Omega^\varepsilon})$ of the shell, thus defined by means of its \emph{covariant components} $v^\varepsilon_i$ over the vectors $\bm{g}^{i,\varepsilon}$ of the contravariant bases in the reference configuration.

If the norms $\|v^\varepsilon_i\|_{\mathcal{C}^1(\overline{\Omega^\varepsilon})}$ are small enough, the mapping $(\bm{\Theta}^\varepsilon + v^\varepsilon_i \bm{g}^{i,\varepsilon})$ is also an injective immersion, so that one can also define the  \emph{metric tensor of the deformed configuration} $(\bm{\Theta}^\varepsilon + v^\varepsilon_i \bm{g}^{i,\varepsilon})(\overline{\Omega^\varepsilon})$ by means of its covariant components
\begin{align*}
g^\varepsilon_{ij}(v^\varepsilon) := (\bm{g}^\varepsilon_i + \partial^\varepsilon_i \tilde{\bm{v}}^\varepsilon) \cdot (\bm{g}^\varepsilon_j + \partial^\varepsilon_j \tilde{\bm{v}}^\varepsilon)
= g^\varepsilon_{ij} + \bm{g}^\varepsilon_i \cdot \partial_j^\varepsilon \tilde{\bm{v}}^\varepsilon + \partial^\varepsilon_i \tilde{\bm{v}}^\varepsilon \cdot \bm{g}^\varepsilon_j + \partial_i^\varepsilon \tilde{\bm{v}}^\varepsilon \cdot \partial_j^\varepsilon \tilde{\bm{v}}^\varepsilon.
\end{align*}

The linear part with respect to $\tilde{\bm{v}}^\varepsilon$ in the difference $\dfrac{1}{2}(g^\varepsilon_{ij} (\bm{v}^\varepsilon) - g^\varepsilon_{ij})$ is called the \emph{linearised strain tensor} associated with the displacement field $v^\varepsilon_i \bm{g}^{i, \varepsilon}$, the covariant components of which are defined by:
$$
e^\varepsilon_{i\|j}(\bm{v}^\varepsilon) := \frac{1}{2} \left(\bm{g}^\varepsilon_i \cdot \partial_j^\varepsilon \tilde{\bm{v}}^\varepsilon + \partial^\varepsilon_i \tilde{\bm{v}}^\varepsilon \cdot \bm{g}^\varepsilon_j\right) = \frac{1}{2} (\partial^\varepsilon_j v^\varepsilon_i + \partial^\varepsilon_i v^\varepsilon_j) - \Gamma^{p,\varepsilon}_{ij} v^\varepsilon_p = e_{j\|i}^\varepsilon(\bm{v}^\varepsilon).
$$
The functions $e^\varepsilon_{i\|j}(\bm{v}^\varepsilon)$ are called the \emph{linearised strains in curvilinear coordinates} associated with the displacement field $v^\varepsilon_i \bm{g}^{i,\varepsilon}$.

We assume throughout this paper that, for each $\varepsilon > 0$, the reference configuration $\bm{\Theta}^\varepsilon(\overline{\Omega^\varepsilon})$ of the shell is a \emph{natural state} (i.e., stress-free) and that the material constituting the shell is \emph{homogeneous}, \emph{isotropic}, and \emph{linearly elastic}. The behaviour of such an elastic material is thus entirely governed by its two \emph{Lam\'{e} constants} $\lambda \ge 0$ and $\mu > 0$ (for details, see, e.g., Section~3.8 of~\cite{Ciarlet1988}).

We will also assume that the shell is subjected to \emph{applied body forces} whose density per unit volume is defined by means of its covariant components $f^{i, \varepsilon} \in L^2(\Omega^\varepsilon)$, and to a \emph{homogeneous boundary condition of place} along the portion $\Gamma^\varepsilon_0$ of its lateral face (i.e., the displacement vanishes on $\Gamma^\varepsilon_0$).

The confinement condition considered in this paper, which was originally suggested by Brezis \& Stampacchia~\cite{BrezisStampacchia1968}, does not take any traction forces into account. One reason for neglecting traction forces is that, since this portion of the boundary is not \emph{a priori} specified, i.e., it is an \emph{unknown} in such obstacle problem , there should be no traction forces acting on the portion of the boundary of the three-dimensional shell in contact with the obstacle.

In this paper we consider a specific \emph{obstacle problem} for such a shell, in the sense that the shell is subjected to a \emph{confinement condition}, expressing that any \emph{admissible displacement vector field} $v^\varepsilon_i \bm{g}^{i, \varepsilon}$ must be such that all the points of the corresponding deformed configuration remain in a prescribed \emph{half-space} of the form
$$
\mathbb{H}:=\{x\in\mathbb{E}^3;\boldsymbol{Ox}\cdot\bm{q}\ge 0\},
$$
where $\bm{q}$ is a \emph{unit-vector} which is given once and for all, and which is thus orthogonal to the plane associated with the half-space where the linearly elastic shell is required to remain confined.
In other words, any admissible displacement field must satisfy
\begin{equation}
	\label{cc-original}
	\left(\bm{\Theta}^\varepsilon(x^\varepsilon)+v^\varepsilon_i(x^\varepsilon)\bm{g}^{i,\varepsilon}(x^\varepsilon)\right)\cdot\bm{q}\ge 0,
\end{equation}
for all $x^\varepsilon \in \overline{\Omega^\varepsilon}$, or possibly only for almost all $x^\varepsilon \in \Omega^\varepsilon$ when the covariant components $v^\varepsilon_i$ are required to belong to the Sobolev space $H^1(\Omega^\varepsilon)$ as in Theorem \ref{t:2} below.

It will of course be assumed that the reference configuration satisfies the confinement condition, i.e., that
$$
\bm{\Theta}^\varepsilon(\overline{\Omega^\varepsilon}) \subset \mathbb{H}.
$$

It is to be emphasised that the above confinement condition \emph{definitely departs} from the usual \emph{Signorini condition} favoured by most authors, who usually require that only the points of the undeformed and deformed ``lower face'' $\omega \times \{-\varepsilon\}$ of the reference configuration satisfy the confinement condition (see, e.g., \cite{LegMia2008}, \cite{LegMia2018}, \cite{Rodri2018}). Clearly, the confinement condition considered in the present paper, which is inspired by the formulation proposed by Br\'ezis \& Stampacchia~\cite{BrezisStampacchia1968}, is more physically realistic, since a Signorini condition imposed only on the lower face of the reference configuration does not prevent -- at least mathematically -- other points of the deformed reference configuration to ``cross'' the plane  $\{x \in \mathbb{E}^3; \; \bm{Ox} \cdot \bm{q} = 0\}$ and then to end up on the ``other side'' of this plane.

Such a confinement condition renders the asymptotic analysis substantially more difficult, however, as the constraint now bears on a vector field, namely the displacement vector field of the reference configuration, \emph{instead of on only a single component} of this field.

The mathematical modelling of such an \emph{obstacle problem for a linearly elastic shell} is then straightforward. Indeed, \emph{apart from} the confinement condition, the rest, i.e., the \emph{function space} and the expression of the \emph{quadratic energy} $J^\varepsilon$, are classical (see, e.g.~\cite{Ciarlet2000}). More specifically, let
$$
A^{ijk\ell,\varepsilon} := \lambda g^{ij,\varepsilon} g^{k\ell,\varepsilon} + \mu \left( g^{ik,\varepsilon} g^{j\ell,\varepsilon} + g^{i\ell,\varepsilon} g^{jk,\varepsilon}\right) =A^{jik\ell,\varepsilon} =  A^{k\ell ij,\varepsilon},
$$
denote the contravariant components of the \emph{elasticity tensor} of the elastic material constituting the shell. Then the unknown of the problem, which is the vector field $\bm{u}^\varepsilon = (u^\varepsilon_i)$ where the functions $u^\varepsilon_i : \overline{\Omega^\varepsilon} \to \mathbb{R}$ are the three covariant components of the unknown ``\emph{three-dimensional}'' displacement vector field $u^\varepsilon_i \bm{g}^{i, \varepsilon}$ of the reference configuration of the shell, should minimise the \emph{energy} $J^\varepsilon:\bm{H}^1(\Omega^\varepsilon) \to \mathbb{R}$ defined by
$$
J^\varepsilon(\bm{v}^\varepsilon) := \frac{1}{2} \int_{\Omega^\varepsilon} A^{ijk\ell,\varepsilon} e^\varepsilon_{k\|\ell}(\bm{v}^\varepsilon)e^\varepsilon_{i\|j}(\bm{v}^\varepsilon) \sqrt{g^\varepsilon} \dd x^\varepsilon - \int_{\Omega^\varepsilon} f^{i,\varepsilon} v^\varepsilon_i \sqrt{g^\varepsilon} \dd x^\varepsilon,
$$
for each $\bm{v}^\varepsilon = (v^\varepsilon_i) \in \bm{H}^1(\Omega^\varepsilon)$
over the \emph{set of admissible displacements} defined by:
$$
\bm{U}(\Omega^\varepsilon) := \{\bm{v}^\varepsilon = (v^\varepsilon_i) \in \bm{H}^1 (\Omega^\varepsilon) ; \bm{v}^\varepsilon = \textbf{0} \text{ on } \Gamma^\varepsilon_0 \textup{ and } (\bm{\Theta}^\varepsilon(x^\varepsilon) + v^\varepsilon_i (x^\varepsilon) \bm{g}^{i, \varepsilon} (x^\varepsilon)) \cdot \bm{q} \ge 0 \textup{ for a.a. } x^\varepsilon \in \Omega^\varepsilon\}.
$$

The solution to this \emph{minimisation problem} exists and is unique, and it can be also characterised as the unique solution of the following problem:

\begin{customprob}{$\mathcal{P}(\Omega^\varepsilon)$}\label{problem0}
	Find $\bm{u}^\varepsilon =(u_i^\varepsilon)\in \bm{U}(\Omega^\varepsilon)$ that satisfies the variational inequalities:
	$$
	\int_{\Omega^\varepsilon}
	A^{ijk\ell, \varepsilon} e^\varepsilon_{k\| \ell}  (\bm{u}^\varepsilon)
	\left( e^\varepsilon_{i\| j}  (\bm{v}^\varepsilon) -  e^\varepsilon_{i\| j}  (\bm{u}^\varepsilon)  \right) \sqrt{g^\varepsilon} \dd x^\varepsilon \ge \int_{\Omega^\varepsilon} f^{i , \varepsilon} (v^\varepsilon_i - u^\varepsilon_i)\sqrt{g^\varepsilon} \dd x^\varepsilon,
	$$
	for all $\bm{v}^\varepsilon = (v^\varepsilon_i) \in \bm{U}(\Omega^\varepsilon)$.
	\bqed	
\end{customprob}

The following result then follows from classical arguments (see, e.g., Theorem~4.8-2 in~\cite{Ciarlet2025}).
\begin{theorem} 
\label{t:2}
 The quadratic minimisation problem: Find a vector field $\bm{u}^\varepsilon \in \bm{U}(\Omega^\varepsilon)$ such that
$$
J^\varepsilon(\bm{u}^\varepsilon) = \inf_{\bm{v}^\varepsilon \in \bm{U} (\Omega^\varepsilon)} J^\varepsilon (\bm{v}^\varepsilon),
$$
has one and only one solution. Besides, $\bm{u}^\varepsilon$ is also the unique solution of Problem~\ref{problem0}.
\qed
\end{theorem}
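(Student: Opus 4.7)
The plan is to cast the theorem as a standard application of the Stampacchia theorem for continuous coercive bilinear forms on a non-empty, closed, convex subset of a Hilbert space, the ambient space being $\bm{H}^1(\Omega^\varepsilon)$. The statement is a textbook-type verification: once the hypotheses of Stampacchia's theorem are checked, existence and uniqueness of the minimiser \emph{and} the equivalence with the variational inequality of Problem~\ref{problem0} are simultaneous byproducts of that theorem.

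First, I would verify that $\bm{U}(\Omega^\varepsilon)$ is a non-empty, closed, convex subset of $\bm{H}^1(\Omega^\varepsilon)$. Non-emptiness follows from the standing hypothesis $\bm{\Theta}^\varepsilon(\overline{\Omega^\varepsilon}) \subset \mathbb{H}$, which implies $\bm{0} \in \bm{U}(\Omega^\varepsilon)$. Convexity is immediate because the confinement condition $(\bm{\Theta}^\varepsilon(x^\varepsilon) + v^\varepsilon_i(x^\varepsilon)\bm{g}^{i,\varepsilon}(x^\varepsilon))\cdot\bm{q}\ge 0$ is affine in $\bm{v}^\varepsilon$, and the boundary condition $\bm{v}^\varepsilon=\bm{0}$ on $\Gamma_0^\varepsilon$ is linear. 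Closedness follows by combining continuity of the trace operator $\bm{H}^1(\Omega^\varepsilon)\to\bm{L}^2(\Gamma_0^\varepsilon)$ with the fact that, since $\bm{g}^{i,\varepsilon}\in\mathcal{C}^1(\overline{\Omega^\varepsilon};\mathbb{R}^3)$, the map $\bm{v}^\varepsilon\mapsto v^\varepsilon_i\bm{g}^{i,\varepsilon}\cdot\bm{q}$ is continuous from $\bm{L}^2(\Omega^\varepsilon)$ into $L^2(\Omega^\varepsilon)$, so the a.e.\ linear inequality defines a closed convex subset of $\bm{L}^2(\Omega^\varepsilon)$, a fortiori of $\bm{H}^1(\Omega^\varepsilon)$.

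Next, I would identify the symmetric bilinear form
$$
a^\varepsilon(\bm{u}^\varepsilon,\bm{v}^\varepsilon) := \int_{\Omega^\varepsilon} A^{ijk\ell,\varepsilon} e^\varepsilon_{k\|\ell}(\bm{u}^\varepsilon)\, e^\varepsilon_{i\|j}(\bm{v}^\varepsilon)\sqrt{g^\varepsilon}\dd x^\varepsilon
$$
and the continuous linear form $L^\varepsilon(\bm{v}^\varepsilon):=\int_{\Omega^\varepsilon} f^{i,\varepsilon} v^\varepsilon_i\sqrt{g^\varepsilon}\dd x^\varepsilon$, so that $J^\varepsilon(\bm{v}^\varepsilon)=\tfrac{1}{2}a^\varepsilon(\bm{v}^\varepsilon,\bm{v}^\varepsilon)-L^\varepsilon(\bm{v}^\varepsilon)$. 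Continuity of $a^\varepsilon$ on $\bm{H}^1(\Omega^\varepsilon)$ is clear from the smoothness of $A^{ijk\ell,\varepsilon}$ and $g^\varepsilon$ and from the definition of $e^\varepsilon_{i\|j}(\bm{v}^\varepsilon)$ as a first-order differential expression in $\bm{v}^\varepsilon$ with $\mathcal{C}^0(\overline{\Omega^\varepsilon})$ coefficients. Continuity of $L^\varepsilon$ follows from $f^{i,\varepsilon}\in L^2(\Omega^\varepsilon)$ and Cauchy--Schwarz.

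The genuinely non-trivial ingredient is the coercivity of $a^\varepsilon$ on the closed subspace
$$
\bm{V}(\Omega^\varepsilon) := \{\bm{v}^\varepsilon\in\bm{H}^1(\Omega^\varepsilon); \bm{v}^\varepsilon=\bm{0}\text{ on }\Gamma_0^\varepsilon\},
$$
which contains $\bm{U}(\Omega^\varepsilon)-\bm{U}(\Omega^\varepsilon)$. This is obtained by combining two standard ingredients: the uniform positive-definiteness of the elasticity tensor,
$$
\exists\, C_e>0,\quad A^{ijk\ell,\varepsilon}(x^\varepsilon) t_{k\ell} t_{ij}\ge C_e \sum_{i,j}|t_{ij}|^2\quad\text{for all symmetric }(t_{ij}),\ x^\varepsilon\in\overline{\Omega^\varepsilon},
$$
which follows from $\lambda\ge 0$, $\mu>0$ and the uniform positive-definiteness of $(g^{ij,\varepsilon})$ for $\varepsilon$ small, together with the three-dimensional Korn inequality in curvilinear coordinates on $\bm{V}(\Omega^\varepsilon)$ (cf.\ Theorem~1.7-4 of~\cite{Ciarlet2000}), which yields a constant $C_K>0$ such that $\sum_{i,j}\|e^\varepsilon_{i\|j}(\bm{v}^\varepsilon)\|_{L^2(\Omega^\varepsilon)}^2\ge C_K\|\bm{v}^\varepsilon\|_{\bm{H}^1(\Omega^\varepsilon)}^2$ for all $\bm{v}^\varepsilon\in\bm{V}(\Omega^\varepsilon)$. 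Bounding $\sqrt{g^\varepsilon}$ from below by a positive constant, coercivity of $a^\varepsilon$ on $\bm{V}(\Omega^\varepsilon)$ follows. The Stampacchia theorem then produces a unique $\bm{u}^\varepsilon\in\bm{U}(\Omega^\varepsilon)$ satisfying the variational inequality of Problem~\ref{problem0} and, since $a^\varepsilon$ is symmetric, this $\bm{u}^\varepsilon$ is equivalently characterised as the unique minimiser of $J^\varepsilon$ over $\bm{U}(\Omega^\varepsilon)$. The only subtle point is the invocation of Korn's inequality with the boundary condition of place on $\Gamma_0^\varepsilon$, which is precisely what ensures coercivity on the affine set $\bm{U}(\Omega^\varepsilon)$.
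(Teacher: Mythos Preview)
Your proposal is correct and is precisely the standard argument the paper has in mind: the paper does not give a proof at all, merely declaring the result ``straightforwardly proved'' and marking it with \qed, so your Stampacchia-based verification (non-emptiness, closedness, convexity of $\bm{U}(\Omega^\varepsilon)$, continuity and coercivity of the bilinear form via the three-dimensional Korn inequality in curvilinear coordinates, and the symmetric-case equivalence between the variational inequality and the minimisation problem) is exactly the intended route.
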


Since $\bm{\theta}(\overline{\omega})\subset\bm{\Theta}^\varepsilon(\overline{\Omega^\varepsilon})$, it evidently follows that $\bm{\theta}(y)\cdot\bm{q}\ge 0$ for all $y\in \overline{\omega}$. But in fact, a stronger property holds (cf. Lemma~2.1 of~\cite{CiaMarPie2018}).

\begin{lemma}
\label{lem:1}
Let $\omega$ be a domain in $\mathbb{R}^2$, let $\bm{\theta} \in \mathcal{C}^1(\overline{\omega};\mathbb{E}^3)$ be an injective immersion, let $\bm{q} \in \mathbb{E}^3$ be a given unit-vector, and let $\varepsilon > 0$. Then the inclusion
$$
\bm{\Theta}^\varepsilon(\overline{\Omega^\varepsilon})\subset \mathbb{H} = \{x\in\mathbb{E}^3;\;\boldsymbol{Ox} \cdot \bm{q} \ge 0\}
$$
implies that:
$$
d:=\min_{y \in \overline{\omega}}(\bm{\theta}(y)\cdot\bm{q})>0.
$$
\qed
\end{lemma}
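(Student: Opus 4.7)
The plan is to argue by contradiction. Since $y \mapsto \bm{\theta}(y)\cdot\bm{q}$ is continuous on the compact set $\overline{\omega}$, the minimum $d$ is attained at some $y_0 \in \overline{\omega}$; suppose $d = 0$, so that $\bm{\theta}(y_0)\cdot\bm{q} = 0$. The first key step is to extract a second piece of information from the thickness of the shell by evaluating the inclusion at the two extreme fibre points $(y_0, \pm\varepsilon)$: using $\bm{\Theta}^\varepsilon(y_0, x_3^\varepsilon) = \bm{\theta}(y_0) + x_3^\varepsilon \bm{a}^3(y_0)$ together with the hypothesis, one obtains
\[
\bm{\theta}(y_0)\cdot\bm{q} \ge \varepsilon\,|\bm{a}^3(y_0)\cdot\bm{q}|,
\]
so $\bm{\theta}(y_0)\cdot\bm{q} = 0$ forces $\bm{a}^3(y_0)\cdot\bm{q} = 0$. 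Hence $\bm{q}$ lies in the tangent plane to $\bm{\theta}(\overline{\omega})$ at $\bm{\theta}(y_0)$, and there exist scalars $(c^1,c^2)\ne(0,0)$ with $\bm{q} = c^\beta \bm{a}_\beta(y_0)$.

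Next, I would localise the vanishing point $y_0$ on $\partial\omega$ via the injective-immersion property. Since $\bm{\Theta}^\varepsilon$ is an injective $\mathcal{C}^1$ immersion on $\overline{\Omega^\varepsilon}$, its restriction to the open set $\Omega^\varepsilon$ is a $\mathcal{C}^1$-diffeomorphism onto an open subset of $\mathbb{E}^3$. Being contained in the closed half-space $\mathbb{H}$, this open subset must actually lie in its open interior $\{x\in\mathbb{E}^3 : \boldsymbol{Ox}\cdot\bm{q}>0\}$. In particular, for every $y\in\omega$ the middle-surface point $\bm{\theta}(y) = \bm{\Theta}^\varepsilon(y,0)$ satisfies $\bm{\theta}(y)\cdot\bm{q}>0$, so the zero of $\bm{\theta}\cdot\bm{q}$ at $y_0$ forces $y_0 \in \partial\omega$.

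It remains to contradict $y_0 \in \partial\omega$ by a first-order analysis at the zero minimiser $y_0$ of the nonnegative function $f(y) := \bm{\theta}(y)\cdot\bm{q}$. Writing $f(y_0+\delta) = c_\alpha \delta^\alpha + o(|\delta|)$ with $c_\alpha := \bm{a}_\alpha(y_0)\cdot\bm{q} = a_{\alpha\beta}(y_0) c^\beta$, the nonnegativity $f \ge 0$ on $\overline{\omega}$ yields $c_\alpha v^\alpha \ge 0$ for every $v$ in the tangent cone to $\overline{\omega}$ at $y_0$. Using that $\overline{\omega}$ is locally on a single side of its Lipschitz boundary at $y_0$, one can take $v$ and $-v$ along a line tangent to $\partial\omega$ (forcing the tangential part of $(c_\alpha)$ to vanish) and then along the inward normal (forcing the remaining part to vanish). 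The conclusion $c_\alpha = 0$, and hence $c^\beta = 0$ after inverting $(a_{\alpha\beta}(y_0))$, contradicts $\bm{q}\ne\bm{0}$.

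The main obstacle is precisely this last step: at a Lipschitz boundary point the tangent cone to $\overline{\omega}$ need not be a full half-plane (it can be a strict convex sub-cone at a corner of $\partial\omega$), so the "$\pm v$ along a tangent" trick requires either additional smoothness of $\partial\omega$ near $y_0$ or a second-order refinement in which the sharper pointwise bound $f(y) \ge \varepsilon|\bm{a}^3(y)\cdot\bm{q}|$ is Taylor-expanded around $y_0$ and combined with the Weingarten relation $\partial_\alpha \bm{a}^3 = -b_\alpha^\beta \bm{a}_\beta$ to extract the desired quantitative lower bound.
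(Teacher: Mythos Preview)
The paper does not supply a proof of this lemma; it merely cites Lemma~2.1 of~\cite{CiaMarPie2018} and closes the statement, so there is no in-paper argument to compare against.

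Your steps (1)--(3) are essentially correct (step (3) borrows from the surrounding text the hypothesis that $\bm{\Theta}^\varepsilon$ is an injective immersion, which the lemma itself does not assume, but this is harmless in context). The genuine problem is the one you yourself flag in step (4): at a boundary minimiser $y_0\in\partial\omega$ of the nonnegative function $f=\bm{\theta}\cdot\bm{q}$, the first-order condition $c_\alpha v^\alpha\ge 0$ for every $v$ in the tangent cone to $\overline{\omega}$ does \emph{not} force $(c_\alpha)=0$. You locate the difficulty at Lipschitz corners, but it is already present at a smooth boundary point, where the tangent cone is a closed half-plane: testing with the inward normal gives only $c\cdot n\ge 0$, never $=0$, so the gradient may be a nonzero inward normal. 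Your suggested repairs --- extra boundary smoothness, or a second-order expansion of $f(y)\ge\varepsilon|\bm{a}^3(y)\cdot\bm{q}|$ via the Weingarten relation --- do not close this. In fact nothing can under the stated hypotheses alone, because the statement as written admits the counterexample $\omega=(0,1)^2$, $\bm{\theta}(y)=(y_1,y_2,0)$, $\bm{q}=(1,0,0)$: here $\bm{\Theta}^\varepsilon(y,x_3)=(y_1,y_2,x_3)$ is a smooth injective immersion with $\bm{\Theta}^\varepsilon(\overline{\Omega^\varepsilon})=[0,1]^2\times[-\varepsilon,\varepsilon]\subset\mathbb{H}$, yet $d=0$ along the edge $y_1=0$. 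The referenced lemma presumably carries an extra hypothesis that has been dropped in the restatement here; the obstruction you encountered reflects that omission rather than a missing trick on your part.
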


\section{The scaled three-dimensional problem for a family of generalised membrane shells} 
\label{Sec:3}

In section~\ref{Sec:2}, we considered an obstacle problem for ``general'' linearly elastic shells. From now on, we will restrict ourselves  to a specific class of shells. Prior to giving the definition of linearly elastic generalised membrane shell, we recall the definitions of the remaining two categories of linearly elastic shells, namely, linearly elastic flexural shells, and linearly elastic elliptic membrane shells.

A shell is classified as a linearly elastic flexural shell if it satisfies two additional conditions: First, $\emptyset \neq \gamma_0 \subset \gamma$, meaning that a homogeneous boundary condition is imposed over a non-zero area portion of the entire lateral face $\gamma_0 \times \left[-\varepsilon,\varepsilon\right]$ of the shell; and second, the space
\begin{equation*}
	\bm{V}_F(\omega):=\{\bm{\eta}=(\eta_i) \in H^1(\omega)\times H^1(\omega)\times H^2(\omega); \gamma_{\alpha\beta}(\bm{\eta})=0 \textup{ in }\omega
	\textup{ and }\eta_i=\partial_{\nu}\eta_3=0 \textup{ on }\gamma_0\},
\end{equation*}
contains non-zero functions, i.e., $\bm{V}_F(\omega)\neq\{\bm{0}\}$.

Linearly elastic shells are classified as \emph{membranes}, provided $\bm{V}_F(\omega)=\{\bm{0}\}$. Among linearly elastic membrane shells, we can distinguish two sub-families: elliptic membrane shells and generalised membrane shells.

A linearly elastic membrane shell is classified as \emph{elliptic membrane shell} if the following two additional assumptions are satisfied: \emph{first}, $\gamma_0 = \gamma$, i.e., the homogeneous boundary condition of place is imposed over the \emph{entire lateral face} $\gamma \times \left[-\varepsilon,\varepsilon\right]$ of the shell, and \emph{second}, its middle surface $\bm{\theta}(\overline{\omega})$ is \emph{elliptic}, according to the definition given in section~\ref{Sec:1}.

We are now in position to state the definition of linearly elastic generalised membranes (cf., e.g., Section~5.1 of~\cite{Ciarlet2000}). A linearly elastic shell is said to be a \emph{linearly elastic generalised membrane shell} if the following \emph{two additional assumptions} are simultaneously satisfied: \emph{First}, $\text{length}\,\gamma_0>0$ (an assumption that is satisfied if $\gamma_0$ is a non-empty relatively open subset of $\gamma$, as assumed here). \emph{Second}, the \emph{space of admissible linearised inextensional displacements} $\bm{V}_F(\omega)$ \emph{does not contain any non-zero vectors}, i.e.,
$$
\bm{V}_F(\omega) = \{\bm{0}\},
$$
\emph{but, the shell is not a linearly elastic elliptic membrane shell} (note that, in light of the results in~\cite{Ciarlet2000}, the space $\bm{V}_F(\omega)$ indeed reduces to $\{\bm{0}\}$ if the shell is a linearly elastic elliptic membrane one).
The \emph{second} condition in the definition of a linearly elastic generalised membrane shell, viz.,
$$
\bm{V}_F(\omega)=\{\bm{0}\},
$$
is equivalent to stating that the \emph{semi-norm $\left|\cdot\right|_\omega^M$ defined by
	$$
	|\bm{\eta}|_\omega^M:=\left\{\sum_{\alpha,\beta}|\gamma_{\alpha\beta}(\bm{\eta})|_{L^2(\omega)}^2\right\}^{1/2},
	$$
	for each $\bm{\eta}=(\eta_i) \in H^1(\omega) \times H^1(\omega) \times L^2(\omega)$ becomes a norm over the space} 
$$
\bm{V}_K(\omega)=\{\bm{\eta}=(\eta_i) \in H^1(\omega)\times H^1(\omega) \times H^2(\omega); \eta_i=\partial_{\nu}\eta_3=0 \textup{ on }\gamma_0\}.
$$

Generalised membrane shells are themselves subdivided into \emph{two categories} described in terms of the spaces:
\begin{align*}
	\bm{V}(\omega)&:=\{\bm{\eta}=(\eta_i) \in \bm{H}^1(\omega);\bm{\eta}={\bm{0}} \textup{ on }\gamma_0\},\\
	\bm{V}_{0}(\omega)&:=\{\bm{\eta}=(\eta_i)\in\bm{V}(\omega);\gamma_{\alpha\beta}(\bm{\eta})=0 \textup{ in }\omega\}.
\end{align*}

A linearly elastic generalised membrane shell is \emph{``of the first kind''} if 
$$
\bm{V}_{0}(\omega)=\{\bm{0}\},
$$
or, equivalently, \emph{if the semi-norm} $\left|\cdot\right|_\omega^M$ \emph{is already a norm over the space} $\bm{V}(\omega)$ (hence, \emph{a fortiori}, over the space $\bm{V}_K(\omega) \subset \bm{V}(\omega)$).
Otherwise, i.e., if 
$$
{\bm{V}_F(\omega)=\{\bm{0}\}} \quad\textup{but}\quad {\bm{V}_{0}(\omega)\neq \{\bm{0}\}},
$$
or, equivalently, if the semi-norm $\left|\cdot\right|_\omega^M$ is a norm over $\bm{V}_K(\omega)$ but \emph{not} over $\bm{V}(\omega)$, the linearly elastic shell is a linearly elastic generalised membrane shell \emph{``of the second kind''}.
In this paper we shall only consider linearly elastic generalised membrane shells ``of the first kind'' as they are the most frequently encountered in the practice.

In this paper, we consider the \emph{obstacle problem} as defined in section~\ref{Sec:2} \emph{for a family of linearly elastic generalised membrane shells of the first kind}, all sharing the \emph{same middle surface} and whose thickness $2 \varepsilon > 0$ is considered as a ``small'' parameter approaching zero. Our objective then consists in performing an \emph{asymptotic analysis as} $\varepsilon \to 0$, so as to seek whether we can identify a \emph{two-dimensional limit model}. To this end, we shall resort to a (by now classical) approach due to Ciarlet \& Lods in~\cite{CiaLods1996d} (see also Theorem~5.6-1 of~\cite{Ciarlet2000}). To begin with, we ``scale'' Problem~\ref{problem0}, over a \emph{fixed domain} $\Omega$, i.e., independent of $\varepsilon$, using appropriate \emph{scalings on the unknowns} and \emph{assumptions on the data}.
Define
$$
\Omega := \omega \times (-1,1)\quad\textup{ and }\quad\Gamma_0:=\gamma_0\times [-1,1],
$$
let $x=(x_i)$ denote a generic point in the set $\overline{\Omega}$, and let $\partial_i:=\partial/\partial x_i$. With each point $x=(x_i)\in\overline{\Omega}$, we associate  the point $x^\varepsilon=(x^\varepsilon_i)$ defined by
$$
x^\varepsilon_\alpha:=x_\alpha=y_\alpha \quad\text{ and }\quad x^\varepsilon_3:=\varepsilon x_3,
$$
so that $\partial^\varepsilon_\alpha=\partial_\alpha$ and $\partial^\varepsilon_3=\frac{1}{\varepsilon} \partial_3$. To the unknown $\bm{u}^\varepsilon=(u^\varepsilon_i)$ and to the vector fields $\bm{v}^\varepsilon=(v^\varepsilon_i)$ appearing in the formulation of Problem~\ref{problem0} we then associate the \emph{scaled unknown} $\bm{u}(\varepsilon)=(u_i(\varepsilon))$ and the \emph{scaled vector fields} $\bm{v}=(v_i)$ by letting
$$
u_i(\varepsilon)(x):=u^\varepsilon_i(x^\varepsilon)\text{ and }v_i(x):=v^\varepsilon_i(x^\varepsilon),
$$
at each $x\in \overline{\Omega}$. Finally, we \emph{assume} that there exist functions $f^i \in L^2(\Omega)$ \emph{independent on} $\varepsilon$ such that the following \emph{assumptions on the applied body forces} hold:
\begin{equation*}
f^{i, \varepsilon} (x^\varepsilon) = f^i(x),\quad \text{ for a.a. } x \in \Omega.
\end{equation*}

Note that the independence on $\varepsilon$ of the Lam\'{e} constants assumed in section~\ref{Sec:2} in the formulation of Problem~\ref{problem0} implicitly constitutes another \emph{assumption on the data}.

In view of the proposed scaling, we define the following ``scaled'' counterparts of the various functions and vector fields introduced in section~\ref{Sec:1}:
\begin{align*}
	\bm{g}^i(\varepsilon)(x)&:=\bm{g}^{i,\varepsilon}(x^\varepsilon),\quad \textup{ for all } x\in \overline{\Omega},\\
	g(\varepsilon)(x) &:= g^\varepsilon(x^\varepsilon)\text{ and } A^{ijk\ell}(\varepsilon)(x):=A^{ijk\ell, \varepsilon}(x^\varepsilon),\quad\textup{ for all } x\in \overline{\Omega},\\
	e_{\alpha\|\beta}(\varepsilon;\bm{v})&:=\frac{1}{2}(\partial_\beta v_\alpha + \partial_\alpha v_\beta)-\Gamma^k_{\alpha\beta}(\varepsilon) v_k = e_{\beta\|\alpha}(\varepsilon;\bm{v}),\\
	e_{\alpha\|3}(\varepsilon;\bm{v})=e_{3\|\alpha}(\varepsilon;\bm{v}) &:=\frac{1}{2}\left(\frac{1}{\varepsilon}\partial_3 v_\alpha+\partial_\alpha v_3\right)-\Gamma^\sigma_{\alpha3}(\varepsilon) v_\sigma,\\
	e_{3\| 3}(\varepsilon;\bm{v}) &:= \frac{1}{\varepsilon}\partial_3 v_3,
\end{align*}
where
$$
\Gamma^p_{ij}(\varepsilon)(x):=\Gamma^{p,\varepsilon}_{ij}(x^\varepsilon),\quad \textup{ for all } x\in\overline{\Omega}.
$$

Define the space
$$
\bm{V}(\Omega):=\{\bm{v}=(v_i)\in\bm{H}^1(\Omega);\bm{v}=\bm{0}\textup{ on }\gamma_0\times (-1,1)\},
$$
and, for each $\varepsilon > 0$,  define the set:
\begin{align*}
	\bm{U}(\varepsilon;\Omega)&:=\{\bm{v}=(v_i)\in\bm{V}(\Omega);\big(\bm{\theta}(y)+\varepsilon x_3 \bm{a}_3(y)+v_i(x)\bm{g}^i(\varepsilon)(x)\big)\cdot\bm{q}\ge 0 \textup{ for a.a. } x=(y,x_3)\in\Omega\}.
\end{align*}

We now introduce the ``scaled'' version of Problem~\ref{problem0}, denoted in what follows by $\mathcal{P}(\varepsilon;\Omega)$:

\begin{customprob}{$\mathcal{P}(\varepsilon; \Omega)$}
\label{problem0scaled}
Find $\bm{u}(\varepsilon)=(u_i(\varepsilon)) \in \bm{U}(\varepsilon; \Omega)$ that satisfies the variational inequalities:
\begin{align*}
	\int_{\Omega} A^{ijk\ell}(\varepsilon) e_{k\| \ell} (\varepsilon; \bm{u}(\varepsilon)) \left(e_{i\| j}(\varepsilon; \bm{v}) - e_{i\|j} (\varepsilon; \bm{u}(\varepsilon))\right) \sqrt{g(\varepsilon)} \dd x
	\ge \int_{\Omega} f^i (v_i - u_i(\varepsilon)) \sqrt{g(\varepsilon)} \dd x,
\end{align*}
for all $\bm{v}=(v_i) \in \bm{U}(\varepsilon;\Omega)$.
\bqed
\end{customprob}

Then, the following existence result holds:

\begin{theorem} \label{t:3}
The scaled unknown $\bm{u}(\varepsilon)$ is the unique solution of the variational Problem~\ref{problem0scaled}.
\end{theorem}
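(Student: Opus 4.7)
The plan is to pull back Problem~\ref{problem0} from $\Omega^\varepsilon$ to the fixed domain $\Omega$ via the diffeomorphism $\pi^\varepsilon:\overline{\Omega}\to\overline{\Omega^\varepsilon}$ defined by $\pi^\varepsilon(y,x_3) := (y,\varepsilon x_3)$, and then to check that the image of the variational inequality satisfied by $\bm{u}^\varepsilon$ is exactly the variational inequality characterising Problem~\ref{problem0scaled}. Existence and uniqueness of $\bm{u}(\varepsilon)$ will then follow directly from Theorem~\ref{t:2}, because $\pi^\varepsilon$ induces a linear isomorphism between $\bm{V}(\Omega^\varepsilon)$ and $\bm{V}(\Omega)$ that, as will be shown, maps $\bm{U}(\Omega^\varepsilon)$ bijectively onto $\bm{U}(\varepsilon;\Omega)$.

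The first block of verifications is the compatibility of the strains with the scaling. Since $\partial^\varepsilon_\alpha = \partial_\alpha$ and $\partial^\varepsilon_3 = \varepsilon^{-1}\partial_3$, a direct computation from the definition of $e^\varepsilon_{i\|j}(\bm{v}^\varepsilon)$, coupled with the identity $\Gamma^{p,\varepsilon}_{ij}(\pi^\varepsilon(x)) = \Gamma^p_{ij}(\varepsilon)(x)$ that is built into the very definition of $\Gamma^p_{ij}(\varepsilon)$, yields
\[
e^\varepsilon_{i\|j}(\bm{v}^\varepsilon)(\pi^\varepsilon(x)) = e_{i\|j}(\varepsilon;\bm{v})(x), \qquad \text{for all } (i,j),
\]
where $v_i(x):=v^\varepsilon_i(\pi^\varepsilon(x))$; the case $(i,j)=(\alpha,\beta)$ is immediate, while the cases $(\alpha,3)$ and $(3,3)$ are exactly the identities that motivate the precise form given to $e_{\alpha\|3}(\varepsilon;\bm{v})$ and $e_{3\|3}(\varepsilon;\bm{v})$. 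In the same spirit, $\bm{g}^{i,\varepsilon}(\pi^\varepsilon(x)) = \bm{g}^i(\varepsilon)(x)$, $A^{ijk\ell,\varepsilon}(\pi^\varepsilon(x)) = A^{ijk\ell}(\varepsilon)(x)$, and $g^\varepsilon(\pi^\varepsilon(x)) = g(\varepsilon)(x)$. Substituting these relations into the confinement condition $(\bm{\Theta}^\varepsilon(x^\varepsilon) + v^\varepsilon_i(x^\varepsilon)\bm{g}^{i,\varepsilon}(x^\varepsilon))\cdot\bm{q}\ge 0$ and using $\bm{\Theta}^\varepsilon(\pi^\varepsilon(x)) = \bm{\theta}(y)+\varepsilon x_3\bm{a}_3(y)$ gives exactly the inequality appearing in the definition of $\bm{U}(\varepsilon;\Omega)$, so the map $\bm{v}^\varepsilon\mapsto\bm{v}$ restricts to a bijection $\bm{U}(\Omega^\varepsilon)\to\bm{U}(\varepsilon;\Omega)$.

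With these identifications in place, I would change variables in the two integrals of Problem~\ref{problem0}. The Jacobian of $\pi^\varepsilon$ is $\varepsilon$, so $\dd x^\varepsilon = \varepsilon\dd x$, and the assumption $f^{i,\varepsilon}(x^\varepsilon) = f^i(x)$ on the data combined with the previous identities turns both sides of the variational inequality in Problem~\ref{problem0} into $\varepsilon$ times the corresponding sides of the variational inequality in Problem~\ref{problem0scaled}, evaluated at $\bm{u}(\varepsilon)$ and at the test field $\bm{v}$ associated with an arbitrary $\bm{v}^\varepsilon\in\bm{U}(\Omega^\varepsilon)$. Dividing through by $\varepsilon>0$ and running the test element over all of $\bm{U}(\varepsilon;\Omega)$ (which is legitimate thanks to the bijection established above) shows that $\bm{u}(\varepsilon)$ solves Problem~\ref{problem0scaled}. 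Uniqueness follows by running the argument in reverse: any solution of Problem~\ref{problem0scaled} unscales to a solution of Problem~\ref{problem0}, which by Theorem~\ref{t:2} must coincide with $\bm{u}^\varepsilon$. The only step that requires mild care is the bookkeeping of the factor $\varepsilon^{-1}$ coming from $\partial^\varepsilon_3$ against the Jacobian factor $\varepsilon$; once both are tracked correctly, the proof is essentially automatic.
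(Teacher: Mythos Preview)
Your proposal is correct and is precisely a detailed unpacking of the paper's own one-sentence proof, which simply observes that Problem~\ref{problem0scaled} is a re-writing of Problem~\ref{problem0} over the fixed domain $\Omega$ via the change of variables $x^\varepsilon=\pi^\varepsilon(x)$. The bijection $\bm{U}(\Omega^\varepsilon)\to\bm{U}(\varepsilon;\Omega)$, the identity $e^\varepsilon_{i\|j}(\bm{v}^\varepsilon)\circ\pi^\varepsilon=e_{i\|j}(\varepsilon;\bm{v})$, and the Jacobian factor $\varepsilon$ cancelling from both sides are exactly the ingredients the paper has in mind when it says ``simply constitutes a re-writing''.
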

\begin{proof}
The variational Problem~\ref{problem0scaled} simply constitutes a re-writing of the variational Problem~\ref{problem0}, this time in terms of the scaled unknown $\bm{u}(\varepsilon)$, of the vector fields $\bm{v}$, and of the functions $f^i$, which are now all defined over the domain $\Omega$. Then the assertion follows from this observation.
\end{proof}

The functions $e_{i\|j}(\varepsilon;\bm{v})$ appearing in Problem~\ref{problem0scaled} are called the \emph{scaled linearised strains in curvilinear coordinates} associated with the scaled displacement vector field $v_i \bm{g}^i(\varepsilon)$.

For later purposes (like in Lemma~\ref{lem:2} below), we also let
$$
\bm{g}_i(\varepsilon)(x):=\bm{g}^\varepsilon_i(x^\varepsilon)\textup{ at each }x\in\overline{\Omega}.
$$

It is immediately verified (cf., e.g., \cite{Ciarlet2000}) that \emph{other} assumptions on the data are possible that would give rise to the \emph{same} problem over the fixed domain $\Omega$. For instance, should the Lam\'{e} constants (now denoted) $\lambda^\varepsilon$ and $\mu^\varepsilon$ appearing in Problem~\ref{problem0} be of the form $\lambda^\varepsilon = \varepsilon^t \lambda$ and $\mu^\varepsilon = \varepsilon^t \mu$, where $\lambda \ge 0$ and $\mu$ are constants independent of $\varepsilon$ and $t$ is an arbitrary real number, the \emph{same} Problem~\ref{problem0scaled} arises if we assume that the components of the applied body force density are now of the form
$$
f^{i, \varepsilon} (x^\varepsilon) = \varepsilon^{t} f^i(x), \text{ for a.a. } x\in \Omega,
$$
where the functions $f^i \in L^2(\Omega)$ are independent of $\varepsilon$.

The next lemma assembles various results regarding the asymptotic behaviour as $\varepsilon \to 0$ of the functions and vector fields appearing in the formulation of Problem~\ref{problem0scaled}. These properties will be repeatedly used in the proof of the convergence theorem (Theorem~\ref{asymptotics}).

In the statement of the next lemma (for the proof, see Theorems~3.3-1 and~3.3-2 of~\cite{Ciarlet2000}), the notation ``$\mathcal{O}(\varepsilon)$'', or ``$\mathcal{O}(\varepsilon^2)$'', stands for a remainder that is of order $\varepsilon$, or $\varepsilon^2$, with respect to the sup-norm over the set $\overline{\Omega}$, and any function, or vector-valued function, of the variable $y \in \overline{\omega}$, such as $a^{\alpha\beta}, b_{\alpha\beta}, \bm{a}^i$, etc. (all these are defined in section~\ref{Sec:1}) is identified with the function, or vector-valued function, of $x = (y, x_3) \in \overline{\Omega} = \overline{\omega} \times [-1,1]$ that takes the same value at $x_3 = 0$ and is independent of $x_3 \in \left[-1, 1\right]$; for brevity, this extension from $\overline{\omega} $ to $\overline{\Omega}$ is designated with the same notation. In what follows, the number $\varepsilon_0$ is defined as in Theorem~3.1-1 of~\cite{Ciarlet2000}.

\begin{lemma} \label{lem:2}
Let $\bm{\theta} \in \mathcal{C}^3(\overline{\omega};\mathbb{E}^3)$ be an injective immersion.
Then, the functions $A^{ijk\ell}(\varepsilon) = A^{jik\ell}(\varepsilon) = A^{k\ell ij}(\varepsilon)$ satisfy the following properties:
$$
A^{ijk\ell}(\varepsilon)=A^{ijk\ell}(0)+\mathcal{O}(\varepsilon) , \quad A^{\alpha\beta\sigma3}(\varepsilon)=A^{\alpha333}(\varepsilon)=0,
$$
for all $0<\varepsilon\le\varepsilon_0$, where
\begin{align*}
A^{\alpha\beta\sigma\tau}(0)&=\lambda a^{\alpha\beta}a^{\sigma\tau}+\mu(a^{\alpha\sigma}a^{\beta\tau}+a^{\alpha\tau}a^{\beta\sigma}),\\
A^{\alpha\beta 33} (0) &= \lambda a^{\alpha\beta}, \quad A^{\alpha 3\sigma 3}(0)=\mu a^{\alpha\sigma}, \quad A^{3333}(0)=\lambda+2\mu,
\end{align*}
and there exists a constant $C_e > 0$ such that
$$
\sum_{i,j}|t_{ij}|^2\le C_e A^{ijk\ell}(\varepsilon)(x) t_{k\ell} t_{ij},
$$
for all $0<\varepsilon\le\varepsilon_0$, all $x \in \overline{\Omega}$, and all symmetric matrices $(t_{ij})$.

The functions $\Gamma^p_{ij} (\varepsilon)$ and $g(\varepsilon)$ satisfy the following properties:
\begin{align*}
\Gamma^\sigma_{\alpha\beta}(\varepsilon) &= \Gamma^\sigma_{\alpha\beta} - \varepsilon x_3 (\partial_\alpha b^\sigma_\beta + \Gamma^\sigma_{\alpha\tau} b^\tau_\beta - \Gamma^\tau_{\alpha\beta} b^\sigma_\tau) + \mathcal{O}(\varepsilon^2),\\
\Gamma^3_{\alpha\beta}(\varepsilon) &= b_{\alpha\beta} - \varepsilon x_3 b^\sigma_\alpha b_{\sigma \beta},\\
\partial_3 \Gamma^p_{\alpha\beta}(\varepsilon) &= \mathcal{O}(\varepsilon),\\
\Gamma^\sigma_{\alpha 3}(\varepsilon) &= - b^\sigma_\alpha - \varepsilon x_3 b^\tau_\alpha b^\sigma_\tau + \mathcal{O}(\varepsilon^2),\\
\Gamma^3_{\alpha 3}(\varepsilon) &= \Gamma^p_{33}(\varepsilon) =0,\\
g(\varepsilon) &= a + \mathcal{O}(\varepsilon),
\end{align*}
for all $0<\varepsilon \le \varepsilon_0$ and all $x \in \overline{\Omega}$. In particular then, there exist constants $g_0$ and $g_1$ such that
$$
0 < g_0 \le g(\varepsilon)(x) \le g_1,\quad \textup{ for all }0<\varepsilon \le \varepsilon_0 \textup{ and all }x \in \overline{\Omega}.
$$

The vector fields $\bm{g}_i (\varepsilon)$ and $\bm{g}^j(\varepsilon)$ satisfy the following properties:
\begin{align*}
\bm{g}_\alpha(\varepsilon) &= \bm{a}_\alpha - \varepsilon x_3 b^\sigma_\alpha \bm{a}_\sigma,\\
\bm{g}^\alpha(\varepsilon) &= \bm{a}^\alpha + \varepsilon x_3 b^\alpha_\sigma \bm{a}^\sigma + \mathcal{O}(\varepsilon^2),\\
\bm{g}_3(\varepsilon)&=\bm{a}_3=\bm{a}^3=\bm{g}^3(\varepsilon).
\end{align*}
\qed
\end{lemma}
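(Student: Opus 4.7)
The lemma is an exercise in Taylor expansion in $\varepsilon$ based on the explicit form of the three-dimensional immersion. By definition $\bm{g}_i(\varepsilon)(x)=\bm{g}^\varepsilon_i(x^\varepsilon)=\partial^\varepsilon_i\bm{\Theta}^\varepsilon(x^\varepsilon)$ evaluated at $x^\varepsilon=(y,\varepsilon x_3)$; since $\bm{\Theta}^\varepsilon(x^\varepsilon)=\bm{\theta}(y)+x^\varepsilon_3\bm{a}_3(y)$, differentiation is direct and the Weingarten formula $\partial_\alpha\bm{a}_3=-b^\sigma_\alpha\bm{a}_\sigma$ yields immediately $\bm{g}_\alpha(\varepsilon)=\bm{a}_\alpha-\varepsilon x_3 b^\sigma_\alpha\bm{a}_\sigma$ and $\bm{g}_3(\varepsilon)=\bm{a}_3$.

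The contravariant basis $\bm{g}^j(\varepsilon)$ is recovered from the duality $\bm{g}^j(\varepsilon)\cdot\bm{g}_i(\varepsilon)=\delta^j_i$. Because $\bm{g}_3(\varepsilon)=\bm{a}_3$ is orthogonal to $\bm{a}^\sigma$ and of unit length, we obtain $\bm{g}^3(\varepsilon)=\bm{a}^3=\bm{a}_3$ at once. For the tangential components, the ansatz $\bm{g}^\alpha(\varepsilon)=c^{\alpha\sigma}(\varepsilon)\bm{a}_\sigma$ together with $c^{\alpha\sigma}(\varepsilon)g_{\sigma\beta}(\varepsilon)=\delta^\alpha_\beta$ reduces the problem to inverting the perturbed $2\times 2$ metric $g_{\alpha\beta}(\varepsilon)=a_{\alpha\beta}-2\varepsilon x_3 b_{\alpha\beta}+\mathcal{O}(\varepsilon^2)$, whose expansion delivers $\bm{g}^\alpha(\varepsilon)=\bm{a}^\alpha+\varepsilon x_3 b^\alpha_\sigma\bm{a}^\sigma+\mathcal{O}(\varepsilon^2)$. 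The expansions of $g_{ij}(\varepsilon)$ and $g^{ij}(\varepsilon)$ are then read off these formulae, and the determinant expansion $g(\varepsilon)=a+\mathcal{O}(\varepsilon)$ follows at once. The positivity bounds $0<g_0\le g(\varepsilon)\le g_1$ come from joint continuity of $g(\cdot)$ on the compact set $[0,\varepsilon_0]\times\overline{\Omega}$ together with the strict positivity $g(0)(x)=a(y)>0$ ensured by the immersion hypothesis on $\bm{\theta}$.

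The Christoffel symbol expansions are obtained by differentiating $\bm{g}_j(\varepsilon)$ and taking the inner product with $\bm{g}^p(\varepsilon)$, separating orders in $\varepsilon$. The identities $\Gamma^3_{\alpha 3}(\varepsilon)=\Gamma^p_{33}(\varepsilon)=0$ are immediate: $\partial_3\bm{g}_3(\varepsilon)=0$, while $\partial_\alpha\bm{g}_3(\varepsilon)=\partial_\alpha\bm{a}_3$ lies in the tangent plane and is therefore orthogonal to $\bm{g}^3(\varepsilon)=\bm{a}^3$. The remaining first-order terms, together with the estimate $\partial_3\Gamma^p_{\alpha\beta}(\varepsilon)=\mathcal{O}(\varepsilon)$, emerge from matching orders. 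Finally, the expansion of $A^{ijk\ell}(\varepsilon)$ is obtained by inserting the expansion of $g^{ij}(\varepsilon)$ into the isotropic formula $A^{ijk\ell}(\varepsilon)=\lambda g^{ij}(\varepsilon)g^{k\ell}(\varepsilon)+\mu\bigl(g^{ik}(\varepsilon)g^{j\ell}(\varepsilon)+g^{i\ell}(\varepsilon)g^{jk}(\varepsilon)\bigr)$; the vanishing $A^{\alpha\beta\sigma 3}(\varepsilon)=A^{\alpha 333}(\varepsilon)=0$ comes precisely from $g^{\alpha 3}(\varepsilon)=0$, which is nothing but the orthogonality of $\bm{a}^3$ to the tangent plane. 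The uniform coercivity constant $C_0$ follows because $A^{ijk\ell}(0)$ is strictly positive-definite on symmetric matrices (a standard property of isotropic elasticity tensors with $\mu>0$ and $\lambda\ge 0$), so by continuity of the coefficients in $(\varepsilon,x)$ a uniform bound persists on $[0,\varepsilon_0]\times\overline{\Omega}$.

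There is no serious obstacle: the proof amounts to organised bookkeeping of Taylor expansions in $\varepsilon$ combined with standard compactness on $\overline{\Omega}$. The one place that demands mild care is the propagation of second-order remainders from the perturbed $2\times 2$ metric inversion into the contravariant basis, the Christoffel symbols, and then into the elasticity tensor, since the orders of expansion retained differ across the various formulae listed in the statement. These computations are precisely the content of Theorems~3.3-1 and~3.3-2 of~\cite{Ciarlet2000}, which can be invoked essentially verbatim.
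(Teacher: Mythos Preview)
Your proposal is correct and matches the paper's approach: the paper does not prove this lemma at all but simply cites Theorems~3.3-1 and~3.3-2 of~\cite{Ciarlet2000}, exactly as you do in your final paragraph. Your sketch of the underlying computations (Weingarten formula, inversion of the perturbed $2\times 2$ metric, compactness for the uniform bounds) is accurate and is precisely what those theorems contain.
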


We recall (cf., e.g., \cite{Ciarlet2000}) that the various relations and estimates in Lemma~\ref{lem:2} hold for \emph{any} family of linearly elastic shell. Define the sets $\Gamma^\varepsilon_{+}:=\omega\times\{\varepsilon\}$ and $\Gamma^\varepsilon_{-}:=\omega\times\{-\varepsilon\}$.

The next result relates the confinement condition we are considering in this paper with the classical \emph{Signorini condition} (cf., e.g., Chapter~6 in~\cite{KikuchiOden1988}), according to which:
\begin{equation*}
	(\bm{\Theta}^\varepsilon(y,x_3^\varepsilon) +v_i^\varepsilon(y,x_3^\varepsilon)\bm{g}^{i,\varepsilon}(y,x_3^\varepsilon)) \cdot\bm{q} \ge 0, \quad\textup{ for a.a. }(y,x_3^\varepsilon) \in \Gamma^\varepsilon_+ \cup \Gamma^\varepsilon_{-}.
\end{equation*}

In order to establish this result, we will need to resort to the \emph{Kirchhoff-Love assumptions} (cf., e.g., page~336 in~\cite{Ciarlet2000}), according to which \emph{any point on a normal to the middle surface remains on the normal to the deformed middle surface after the deformation has taken place, and the distance between such a point and the middle surface remains constant before and after the deformation}. 

Let us recall that the widely accepted Kirchhoff-Love assumptions, which are of \emph{geometrical nature} of the shell, constitutes one of the pillars that led Koiter to the derivation of his model~\cite{Koiter,Koiter1970}.

To this end, we restrict ourselves to considering \emph{smooth} displacement fields. This choice is justified, as it is related to the fact that if the norms $\|v^\varepsilon_i\|_{\mathcal{C}^1(\overline{\Omega^\varepsilon})}$ are small enough, the mapping $(\bm{\Theta}^\varepsilon + v^\varepsilon_i \bm{g}^{i,\varepsilon})$ is also an injective immersion (cf. section~\ref{Sec:1}).

\begin{lemma}
	\label{Signorini}
	Let $\bm{\theta}\in\mathcal{C}^3(\overline{\omega};\mathbb{E}^3)$ be an injective immersion and let $\varepsilon>0$ be small enough so that $\bm{\Theta}^\varepsilon\in\mathcal{C}^3(\overline{\Omega^\varepsilon};\mathbb{E}^3)$ is also an injective immersion.
	Let $\bm{v}^\varepsilon \in \bm{\mathcal{C}}^1(\overline{\Omega^\varepsilon})$ be such that $(\bm{\Theta}^\varepsilon+v_i^\varepsilon\bm{g}^{i,\varepsilon})$ is an injective immersion.
	
	Under the validity of the Kirchhoff-Love assumptions, the following conditions are equivalent:
	\begin{itemize}
		\item[(i)] $\bm{v}^\varepsilon \in \bm{U}(\Omega^\varepsilon)$;
		\item[(ii)] $\bm{v}^\varepsilon=\bm{0}$ on $\Gamma_0^\varepsilon$ and $(\bm{\Theta}^\varepsilon(y,x_3^\varepsilon) +v_i^\varepsilon(y,x_3^\varepsilon)\bm{g}^{i,\varepsilon}(y,x_3^\varepsilon)) \cdot\bm{q}\ge 0$, for all $(y,x_3^\varepsilon) \in \Gamma^\varepsilon_+ \cup \Gamma^\varepsilon_{-}$.
	\end{itemize}
\end{lemma}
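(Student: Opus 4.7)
The plan is to reduce the equivalence to the structural observation that, under the Kirchhoff--Love assumptions, the scalar function
$$
F(y,x_3^\varepsilon) := \bigl(\bm{\Theta}^\varepsilon(y,x_3^\varepsilon) + v_i^\varepsilon(y,x_3^\varepsilon)\bm{g}^{i,\varepsilon}(y,x_3^\varepsilon)\bigr)\cdot\bm{q}
$$
is \emph{affine} in $x_3^\varepsilon \in [-\varepsilon,\varepsilon]$ at each fixed $y \in \overline{\omega}$. Once this is established, the lemma follows at once from the elementary fact that an affine function on $[-\varepsilon,\varepsilon]$ is non-negative on the whole interval if and only if it is non-negative at the two endpoints.

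The implication (i) $\Rightarrow$ (ii) is immediate, because $\Gamma^\varepsilon_+ \cup \Gamma^\varepsilon_- \subset \overline{\Omega^\varepsilon}$ and the displacement is continuous by assumption. The content of the statement is the converse (ii) $\Rightarrow$ (i). To establish it, I would first unpack the Kirchhoff--Love hypothesis. The reference normal segment through $\bm{\theta}(y)$, namely $\{\bm{\theta}(y)+x_3^\varepsilon\bm{a}_3(y):|x_3^\varepsilon|\le\varepsilon\}$, is mapped by the deformation $\bm{\Theta}^\varepsilon + v_i^\varepsilon \bm{g}^{i,\varepsilon}$ to the normal segment through the deformed middle-surface point $\bm{\phi}(y) := \bm{\theta}(y)+\eta_i(y)\bm{a}^i(y)$, and the signed distance $x_3^\varepsilon$ is preserved. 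Letting $\bm{\nu}(y)$ denote the unit normal to the deformed middle surface $\bm{\phi}(\overline{\omega})$ at $\bm{\phi}(y)$, this reads
$$
\bm{\Theta}^\varepsilon(y,x_3^\varepsilon) + v_i^\varepsilon(y,x_3^\varepsilon)\bm{g}^{i,\varepsilon}(y,x_3^\varepsilon) = \bm{\phi}(y) + x_3^\varepsilon\,\bm{\nu}(y),
$$
so that $F(y,x_3^\varepsilon) = \bm{\phi}(y)\cdot\bm{q} + x_3^\varepsilon\,\bigl(\bm{\nu}(y)\cdot\bm{q}\bigr)$, which is affine in $x_3^\varepsilon$ for each fixed $y$.

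Now I invoke the elementary lemma: for any $a,b\in\mathbb{R}$, the affine map $t\mapsto at+b$ on $[-\varepsilon,\varepsilon]$ is non-negative throughout the interval precisely when it is non-negative at $t=\pm\varepsilon$ (since its minimum over $[-\varepsilon,\varepsilon]$ is attained at an endpoint). Applying this, for each $y$, to $F(y,\cdot)$ shows that the inequalities $F(y,\pm\varepsilon)\ge 0$ of condition (ii) are equivalent to $F(y,x_3^\varepsilon)\ge 0$ for every $x_3^\varepsilon\in[-\varepsilon,\varepsilon]$, i.e., to condition (i), which is what we wanted.

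The main obstacle is the rigorous formulation of the Kirchhoff--Love identity within the present linearized framework: in strict terms, the statements ``the deformed normal remains normal'' and ``the distance is preserved'' must be read at the linearized level, where they amount to the structural decomposition $\tilde{\bm{v}}^\varepsilon(y,x_3^\varepsilon) = \tilde{\bm{\eta}}(y) + x_3^\varepsilon\,\tilde{\bm{\zeta}}(y)$ with $\tilde{\bm{\eta}}$ the displacement of the middle surface and $\tilde{\bm{\zeta}}$ the associated infinitesimal rotation vector. One has to verify that this decomposition, substituted into $F$ together with the identity $\bm{\Theta}^\varepsilon(y,x_3^\varepsilon) = \bm{\theta}(y) + x_3^\varepsilon\bm{a}_3(y)$ and the contravariant-basis expansion recorded in Lemma~\ref{lem:2}, does produce an expression that is genuinely affine in $x_3^\varepsilon$. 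This is the only calculation in the argument; everything else is the trivial endpoint-monotonicity of affine functions.
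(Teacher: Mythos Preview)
Your argument is correct and follows essentially the same route as the paper's own proof: both exploit that, under the Kirchhoff--Love hypotheses, the deformed normal fibre through $\bm{\theta}(y)$ is a straight segment, so the linear constraint $(\cdot)\cdot\bm{q}\ge 0$ holds along the whole fibre once it holds at the two endpoints (the paper phrases this via the convex combination $s_y^\varepsilon(r)$ of the deformed face points, you via affinity of $F(y,\cdot)$ in $x_3^\varepsilon$). Your closing worry about the linearised framework is unnecessary here: the paper invokes the \emph{geometric} Kirchhoff--Love assumptions verbatim, under which the deformed fibre is exactly a line segment and your affine identity $F(y,x_3^\varepsilon)=\bm{\phi}(y)\cdot\bm{q}+x_3^\varepsilon\,\bm{\nu}(y)\cdot\bm{q}$ is exact, so no expansion of $\bm{g}^{i,\varepsilon}$ via Lemma~\ref{lem:2} is needed.
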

\begin{proof}
	Let us first show that (i) implies (ii).
	If a point $(\hat{y},\hat{x}_3^\varepsilon) \in \Gamma^\varepsilon_+ \cup \Gamma^\varepsilon_{-}$ is such that
	$$
	(\bm{\theta}(\hat{y})+\hat{x}_3^\varepsilon \bm{a}^3(\hat{y}) +v_i^\varepsilon(\hat{y},\hat{x}_3^\varepsilon)\bm{g}^{i,\varepsilon}(\hat{y},\hat{x}_3^\varepsilon)) \cdot\bm{q} <0,
	$$
	then there exists a ball $B_\rho$ centred at the given point $(\hat{y},\hat{x}_3^\varepsilon)$ with small enough radius $\rho>0$ such that:
	$$
	(\bm{\theta}(y)+ x_3^\varepsilon \bm{a}^3(y) +v_i^\varepsilon(y,x_3^\varepsilon)\bm{g}^{i,\varepsilon}(y,x_3^\varepsilon)) \cdot\bm{q} <0,
	$$
	for all $(y,x_3^\varepsilon) \in \overline{\Omega^\varepsilon}\cap B_\rho$. The latter contradicts~(i), namely, the fact that $\bm{v}^\varepsilon \in \bm{U}(\Omega^\varepsilon)$. Therefore~(ii) holds.
	
	Let us next show that the converse implication holds. Thanks to the assumed Kirchhoff-Love assumptions, for each $y \in\overline{\omega}$, if the closed segment $\{\bm{\Theta}^\varepsilon(y,x_3^\varepsilon), -\varepsilon \le x_3^\varepsilon \le \varepsilon\}$ is orthogonal to the plane that is tangent to $\bm{\theta}(\overline{\omega})$ at $\bm{\theta}(y)$, then the segment
	$$
	(1-r)\left(\bm{\theta}(y) -\varepsilon\bm{a}^3(y)+v_i^\varepsilon(y,-\varepsilon) \bm{g}^{i,\varepsilon}(y,-\varepsilon)\right)
	+r\left(\bm{\theta}(y) +\varepsilon\bm{a}^3(y)+v_i^\varepsilon(y,\varepsilon) \bm{g}^{i,\varepsilon}(y,\varepsilon)\right),
	$$
	where $0\le r \le 1$ is also orthogonal to the tangent plane to the deformed middle surface at the point $(\bm{\theta}(y)+v_i^\varepsilon(y,0)\bm{g}^{i,\varepsilon}(y,0))$.
	
	Since we only consider displacements for which the corresponding deformation is an injective immersion, we conclude that for all $y_1, y_2 \in \overline{\omega}$, with $y_1 \neq y_2$,
	$$
	\left(\bigcup_{0\le r\le 1} s_{y_1}^\varepsilon(r)\right) \cap \left(\bigcup_{0\le r\le 1} s_{y_2}^\varepsilon(r)\right) = \emptyset,
	$$
	i.e., the segments corresponding to distinct points in $\overline{\omega}$ do not intersect. Therefore, each point in the deformed reference configuration of the shell belongs to one and only one segment orthogonal to the middle surface of the deformed shell.
	
	Since (ii) is assumed to hold, it follows that:
	\begin{equation*}
		\label{confsegm}
		\left[(1-r)\left(\bm{\theta}(y) -\varepsilon\bm{a}^3(y)+v_i^\varepsilon(y,-\varepsilon) \bm{g}^{i,\varepsilon}(y,-\varepsilon)\right)
		+r\left(\bm{\theta}(y) +\varepsilon\bm{a}^3(y)+v_i^\varepsilon(y,\varepsilon) \bm{g}^{i,\varepsilon}(y,\varepsilon)\right)\right] \cdot \bm{q} \ge 0,
	\end{equation*}
	for all $0\le r \le 1$, for all $y \in \overline{\omega}$. Noting that $y\in\overline{\omega}$, we infer that $\bm{v}^\varepsilon \in \bm{U}(\Omega^\varepsilon)$.
\end{proof}

Note that since $\Gamma^\varepsilon_+ \cup \Gamma^\varepsilon_{-}$ clearly contains the portion of the boundary of the three-dimensional elastic shell that engages contact with the obstacle, the confinement condition considered in this paper implies the validity of the classical Signorini condition.

When one considers a family of linearly elastic \emph{generalised membrane shells} (not necessarily of the first kind) whose thickness $2 \varepsilon$ approaches zero, a specific \emph{Korn's inequality in curvilinear coordinates} (cf., e.g., Theorem~4.1 of~\cite{CiaLodsMia1996} or Theorem~5.3-1 of~\cite{Ciarlet2000}) holds over the \emph{fixed} domain $\Omega = \omega \times (-1,1)$, according to the following theorem. That the constant $C_1$ that appears in this inequality is \emph{independent of} $\varepsilon > 0$ plays a key role in the asymptotic analysis of such a family (see part~(i) of the proof of Theorem~\ref{asymptotics}). A similar inequality of Korn's type was established by Kohn \& Vogelius~\cite{KV1985}.

\begin{theorem}\label{t:4}
Let $\bm{\theta} \in \mathcal{C}^3(\overline{\omega};\mathbb{E}^3)$ be an immersion.
Recall that $\textup{length }\gamma_0>0$.
Define the space
$$
\bm{V} (\Omega) := \{\bm{v} = (v_i) \in \bm{H}^1(\Omega) ; \; \bm{v} = \bm{0} \textup{ on }  \gamma_0 \times [-1,1]\}.
$$
Then there exist constants $\varepsilon_0 > 0$ and $C_1 > 0$ such that
$$
  \left\{ \sum_i \left\| v_i \right\|^2_{H^1(\Omega)}\right\}^{1/2} \le \dfrac{C_1}{\varepsilon} \left\{ \sum_{i,j} \left\| e_{i\|j} (\varepsilon; \bm{v}) \right\|^2_{L^2(\Omega)} \right\}^{1/2}
$$
for all $0 < \varepsilon \leq \varepsilon_0$ and all $\bm{v}=(v_i) \in \bm{V} (\Omega)$.
\qed
\end{theorem}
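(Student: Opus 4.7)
The natural strategy is to pull the estimate on the fixed domain $\Omega$ back to the unscaled domain $\Omega^\varepsilon$ via the inverse of the scaling of Section~\ref{Sec:3}, invoke there a three-dimensional Korn inequality in curvilinear coordinates, and track the $\varepsilon$-dependence of the resulting constant using the uniform bounds of Lemma~\ref{lem:2}. Given $\bm{v}\in \bm{V}(\Omega)$, I would associate with it the unscaled field $\bm{v}^\varepsilon\in \bm{H}^1(\Omega^\varepsilon)$ defined by $v^\varepsilon_i(y, x^\varepsilon_3):=v_i(y,x^\varepsilon_3/\varepsilon)$, which still vanishes on $\Gamma^\varepsilon_0$. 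The chain-rule identities $\partial^\varepsilon_\alpha v^\varepsilon_i=\partial_\alpha v_i$ and $\partial^\varepsilon_3 v^\varepsilon_i=\varepsilon^{-1}\partial_3 v_i$, inserted into the definitions of $e_{i\|j}(\varepsilon;\bm{v})$ and $e^\varepsilon_{i\|j}(\bm{v}^\varepsilon)$, yield the pointwise identity $e^\varepsilon_{i\|j}(\bm{v}^\varepsilon)(x^\varepsilon)=e_{i\|j}(\varepsilon;\bm{v})(x)$, and the change of variables $\dd x^\varepsilon=\varepsilon\,\dd x$ gives
\begin{equation*}
\sum_{i,j}\|e^\varepsilon_{i\|j}(\bm{v}^\varepsilon)\|_{L^2(\Omega^\varepsilon)}^{2}=\varepsilon\sum_{i,j}\|e_{i\|j}(\varepsilon;\bm{v})\|_{L^2(\Omega)}^{2},
\end{equation*}
while the analogous computations on the three pieces of the $H^1$ norm---recalling $\|v_i\|_{L^2(\Omega)}^2=\varepsilon^{-1}\|v^\varepsilon_i\|_{L^2(\Omega^\varepsilon)}^2$, $\|\partial_\alpha v_i\|_{L^2(\Omega)}^2=\varepsilon^{-1}\|\partial^\varepsilon_\alpha v^\varepsilon_i\|_{L^2(\Omega^\varepsilon)}^2$ and $\|\partial_3 v_i\|_{L^2(\Omega)}^2=\varepsilon\|\partial^\varepsilon_3 v^\varepsilon_i\|_{L^2(\Omega^\varepsilon)}^2\le \varepsilon^{-1}\|\partial^\varepsilon_3 v^\varepsilon_i\|_{L^2(\Omega^\varepsilon)}^2$ for $\varepsilon\in(0,1]$---show that
\begin{equation*}
\sum_i\|v_i\|_{H^1(\Omega)}^{2}\le \varepsilon^{-1}\sum_i\|v^\varepsilon_i\|_{H^1(\Omega^\varepsilon)}^{2}.
\end{equation*}

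It therefore suffices to produce a constant $C^*>0$ \emph{independent of} $\varepsilon$ such that the three-dimensional Korn inequality in curvilinear coordinates
\begin{equation*}
\sum_i\|v^\varepsilon_i\|_{H^1(\Omega^\varepsilon)}^{2}\le C^*\sum_{i,j}\|e^\varepsilon_{i\|j}(\bm{v}^\varepsilon)\|_{L^2(\Omega^\varepsilon)}^{2}
\end{equation*}
holds for every $\bm{v}^\varepsilon\in \bm{H}^1(\Omega^\varepsilon)$ vanishing on $\Gamma^\varepsilon_0$. The standard route is to pass to the Cartesian vector field $\tilde{\bm{v}}^\varepsilon:=v^\varepsilon_i\bm{g}^{i,\varepsilon}$, apply to it the classical Cartesian Korn inequality on the thin slab $\Omega^\varepsilon$ with Dirichlet data on $\gamma_0\times(-\varepsilon,\varepsilon)$, and then invert the transformation. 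By Lemma~\ref{lem:2}, the metric determinant $g(\varepsilon)$ is uniformly bounded away from zero and infinity on $\overline{\Omega^\varepsilon}$, and the basis vectors $\bm{g}^\varepsilon_i$, $\bm{g}^{i,\varepsilon}$, together with the Christoffel symbols $\Gamma^{p,\varepsilon}_{ij}$ and their first derivatives, are bounded uniformly in $\varepsilon$; this renders the change of basis uniformly bi-Lipschitz between $\sum_i\|v^\varepsilon_i\|^2_{H^1(\Omega^\varepsilon)}$ and $\|\tilde{\bm{v}}^\varepsilon\|^2_{H^1(\Omega^\varepsilon;\mathbb{E}^3)}$ and controls $|e^{\mathrm{Cart}}_{pq}(\tilde{\bm{v}}^\varepsilon)|$ in terms of $|e^\varepsilon_{i\|j}(\bm{v}^\varepsilon)|$ with constants independent of $\varepsilon$.

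Assembling the two reductions yields
\begin{equation*}
\sum_i\|v_i\|_{H^1(\Omega)}^{2}\le \varepsilon^{-1}\cdot C^*\sum_{i,j}\|e^\varepsilon_{i\|j}(\bm{v}^\varepsilon)\|_{L^2(\Omega^\varepsilon)}^{2}=C^*\sum_{i,j}\|e_{i\|j}(\varepsilon;\bm{v})\|_{L^2(\Omega)}^{2},
\end{equation*}
which is actually stronger than the statement of the theorem (the latter permitting an additional factor $\varepsilon^{-2}$ on the right); the claim then follows \emph{a fortiori} for any $C_1\ge\sqrt{C^*}$ and $\varepsilon_0\le 1$, since $1\le\varepsilon^{-1}$ for $\varepsilon\in(0,1]$. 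The main obstacle in this plan is precisely the uniform-in-$\varepsilon$ Cartesian Korn inequality on the thin slab $\Omega^\varepsilon$ with Dirichlet datum on $\gamma_0\times(-\varepsilon,\varepsilon)$: although morally clear from the presence of a displacement boundary condition on a positive-measure portion of the lateral face, its cleanest proof proceeds by a reflection-and-rescaling reduction to the fixed cylinder $\omega\times(-1,1)$ combined with the classical 3D Cartesian Korn inequality on the latter, together with careful bookkeeping of the way the $\alpha 3$-components of the Cartesian strain transform under the anisotropic rescaling $x^\varepsilon_3=\varepsilon x_3$.
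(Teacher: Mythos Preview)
The paper does not prove this theorem; it is quoted from \cite{CiaLodsMia1996} (equivalently, Theorem~5.3-1 of \cite{Ciarlet2000}), and the paper also points to the companion result of Kohn--Vogelius \cite{KV1985} on the physical domain $\Omega^\varepsilon$. So there is no ``paper's proof'' to compare against, but your proposal contains a genuine error that is worth naming.

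Your scaling computations are correct and in fact show that the Korn constant on $\Omega$ for the scaled strains $e_{i\|j}(\varepsilon;\cdot)$ coincides with the Korn constant on $\Omega^\varepsilon$ for the physical strains $e^\varepsilon_{i\|j}(\cdot)$. The gap is your claim that the latter can be taken \emph{independent of} $\varepsilon$. It cannot: consider on the flat slab $\omega\times(-\varepsilon,\varepsilon)$ the Kirchhoff--Love field $v^\varepsilon=(-x_3^\varepsilon\partial_1\phi,\,-x_3^\varepsilon\partial_2\phi,\,\phi)$ with $\phi\in H^2(\omega)$ vanishing near $\gamma_0$. Then $e_{\alpha 3}=e_{33}=0$ and $e_{\alpha\beta}=-x_3^\varepsilon\partial_{\alpha\beta}\phi$, so $\sum_{i,j}\|e_{ij}\|_{L^2(\Omega^\varepsilon)}^2\sim\varepsilon^3\|D^2\phi\|_{L^2(\omega)}^2$, whereas $\sum_i\|v^\varepsilon_i\|_{H^1(\Omega^\varepsilon)}^2\gtrsim\varepsilon\|\phi\|_{H^1(\omega)}^2$. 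The ratio blows up like $\varepsilon^{-2}$, so the best possible constant on $\Omega^\varepsilon$ is exactly of order $\varepsilon^{-1}$, not $O(1)$; your ``stronger'' inequality is false. The ``reflection-and-rescaling reduction'' you sketch in the last sentence does not rescue this: under the anisotropic rescaling $x_3^\varepsilon=\varepsilon x_3$ the $\alpha 3$-components of the \emph{Cartesian} strain acquire precisely the factor $\varepsilon^{-1}$ relative to the fixed-domain Cartesian strain, so applying ordinary Korn on the fixed cylinder cannot yield anything better than the $C_1/\varepsilon$ already stated.

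In short, your pull-back argument is a correct way to see that the theorem on $\Omega$ is \emph{equivalent} to the Kohn--Vogelius estimate on $\Omega^\varepsilon$ with constant $C/\varepsilon$, but it does not supply an independent proof of the latter. The actual proof in \cite{CiaLodsMia1996} works directly on the scaled domain: one first establishes an $\varepsilon$-independent inequality of Korn's type that controls $\|\bm{v}\|_{H^1(\Omega)}$ by $\|e_{i\|j}(\varepsilon;\bm{v})\|_{L^2(\Omega)}$ \emph{plus} lower-order terms (essentially $\|\bm{v}\|_{L^2(\Omega)}$), and then removes the lower-order term by a compactness/contradiction argument that exploits the structure of the scaled strains and the boundary condition on $\gamma_0\times(-1,1)$; the factor $\varepsilon^{-1}$ appears because the scaled strains contain $\varepsilon^{-1}\partial_3 v_i$, and one inequality along the way costs exactly one power of $\varepsilon$.
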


In order to derive \emph{a priori estimates} for the family $\{\bm{u}(\varepsilon)\}_{\varepsilon>0}$ of solutions for Problem~\ref{problem0}, the applied body forces need to be \emph{admissible} in the sense of~\cite{Ciarlet2000}. By this we mean that, for a given $\varepsilon>0$, there exist functions $F^{ij}(\varepsilon)=F^{ji}(\varepsilon) \in L^2(\Omega)$ such that:
\begin{equation}
	\label{adm-1}
\begin{cases}
&\int_{\Omega} f^i v_i \sqrt{g(\varepsilon)} \dd x = \int_{\Omega} F^{ij}(\varepsilon) e_{i\|j}(\varepsilon;\bm{v}) \sqrt{g(\varepsilon)} \dd x,\quad \textup{ for all }\bm{v} = (v_i) \in \bm{V}(\Omega) \textup{ and all } 0<\varepsilon\le \varepsilon_0,\\
\\
&F^{ij}(\varepsilon) \to F^{ij} \quad\textup{ in }L^2(\Omega) \textup{ as }\varepsilon\to 0.
\end{cases}
\end{equation}

The justification of this definition can be found in, for instance, Section~5.5 of~\cite{Ciarlet2000}. If the applied body forces are admissible, there exists a constant $\kappa_0>0$ independent of $\varepsilon$ such that
\begin{equation}
	\label{adm-2}
\left|\int_{\Omega} F^{ij}(\varepsilon) e_{i\|j}(\varepsilon;\bm{v}) \sqrt{g(\varepsilon)} \dd x\right| \le \kappa_0\sqrt{g_1} \left\{\sum_{i,j}\left\|e_{i\|j}(\varepsilon;\bm{v})\right\|_{L^2(\Omega)}^2\right\}^{1/2},
\end{equation}
for all $0<\varepsilon<\varepsilon_0$ and for all $\bm{v}\in\bm{V}(\Omega)$.

The difficulty in the study of linearly elastic generalised membranes lies in the fact that the asymptotic analysis as $\varepsilon \to 0$ involves, \emph{already in the obstacle free} case, certain \emph{abstract completions}. More precisely, for linearly elastic generalised membrane shells of the first kind, the results of the asymptotic analysis are posed (cf., e.g., Theorem~5.6-1 of~\cite{Ciarlet2000}) over the space
\begin{equation*}
	\bm{V}_M^\sharp(\omega):=\overline{\bm{V}(\omega)}^{|\cdot|_\omega^M},
\end{equation*}
whereas, for linearly elastic generalised membrane shells of the second kind, the results of the asymptotic analysis are posed (cf., e.g., Theorem~5.6-2 of~\cite{Ciarlet2000}) over the space
\begin{equation*}
	\dot{\bm{V}}_M^\sharp(\omega):=\overline{\bm{V}(\omega)/\bm{V}_0(\omega)}^{|\cdot|_\omega^M}.
\end{equation*}

In what follows, given a function $v \in L^2(\Omega)$, we denote by $\bar{v}$ the average of $v$ with respect to the transverse variable, i.e., 
\begin{equation*}
	\bar{v}:=\dfrac{1}{2}\int_{-1}^{1} v(\cdot,x_3) \dd x_3,
\end{equation*}
and we recall that the properties for averages with respect to the transverse variable are established in Section~4.2 of~\cite{Ciarlet2000}. In the same spirit, we denote the average with respect to the transverse variable of a vector field $\bm{v}=(v_i) \in \bm{L}^2(\Omega)$ by $\bar{\bm{v}}$, defined component-wise by $\bar{\bm{v}}:=\left(\bar{v}_i\right)$.

To carry out the asymptotic analysis for a family of linearly elastic generalised membrane shells, we shall introduce the space
\begin{equation*}
	\bm{V}_0(\Omega):=\{\bm{v} \in \bm{H}^1(\Omega);\bm{v}=\bm{0} \textup{ on }\Gamma_0, \partial_3\bm{v}=\bm{0} \textup{ in }\Omega, \textup{ and } \gamma_{\alpha\beta}(\bar{\bm{v}})=0 \textup{ in }\omega\},
\end{equation*}
which is the ``three-dimensional analogue'' of the space $\bm{V}_0(\omega)$ introduced beforehand. We also define the semi-norm $|\cdot|_\Omega^M$ by:
\begin{equation*}
	|\bm{v}|_\Omega^M:=\left\{\|\partial_3\bm{v}\|_{L^2(\Omega)}^2+\left(|\bar{\bm{v}}|_\omega^M\right)^2\right\}^{1/2},\quad\textup{ for all } \bm{v} \in \bm{V}(\Omega).
\end{equation*}

We note in passing that $\bm{V}_0(\omega) =\{\bm{0}\}$ if and only if $\bm{V}_0(\Omega)=\{\bm{0}\}$, or, equivalently, that $|\cdot|_\Omega^M$ is a norm over the space $\bm{V}(\Omega)$ if and only if $|\cdot|_\omega^M$ is a norm over the space $\bm{V}(\omega)$. In other words, a linearly elastic generalised membrane shell is of the first kind if and only if $\bm{V}_0(\Omega)=\{\bm{0}\}$.

\section{Rigorous asymptotic analysis for a family of linearly elastic generalised membranes of the first kind}
\label{Sec:4}

In this section and in the next one, we restrict our attention to \emph{linearly elastic generalised membrane shells of the first kind}.

This section is devoted to showing, in the same spirit as~\cite{CiaLods1996d} (see also Theorem~5.6-1 of~\cite{Ciarlet2000}), that the solutions $\bm{u}(\varepsilon)$ of the (scaled) three-dimensional problems~\ref{problem0scaled} converge, as $\varepsilon$ approaches zero, to the solution of a two-dimensional limit variational problem, denoted by $\mathcal{P}_M^\sharp(\omega)$ in what follows.

We will show that the solutions $\bm{u}(\varepsilon)$ for Problem~\ref{problem0scaled} converge in the \emph{completion of the space} $\bm{V}(\Omega)$ \emph{with respect to the norm} $|\cdot|_\Omega^M$. As a result, the two-dimensional limit model that we shall recover will be an \emph{abstract variational inequality} whose solution lies in the abstract completion of a non-empty, closed, and convex set with respect to a certain norm.

We denote by $B_M^\sharp(\cdot,\cdot):\bm{V}_M^\sharp(\omega) \times \bm{V}_M^\sharp(\omega) \to \mathbb{R}$ the unique continuous extension to the space $\bm{V}_M^\sharp(\omega) \times \bm{V}_M^\sharp(\omega)$ of the bilinear form defining the energy of a linearly elastic elliptic membrane shells, denoted and defined by:
\begin{equation*}
(\bm{\eta},\bm{\xi}) \in \bm{V}(\omega) \times \bm{V}(\omega) \mapsto B_M(\bm{\eta},\bm{\xi}):=
\int_{\omega} a^{\alpha\beta\sigma\tau} \gamma_{\sigma\tau}(\bm{\eta}) \gamma_{\alpha\beta}(\bm{\xi}) \sqrt{a} \dd y.
\end{equation*}

For each $\varepsilon>0$, we define the linear mapping $L(\varepsilon):\bm{V}(\Omega) \to \mathbb{R}$ by:
\begin{equation*}
	L(\varepsilon)(\bm{v}):=\int_{\Omega} f^i v_i \sqrt{g(\varepsilon)} \dd x,\quad\textup{ for all }\bm{v}=(v_i)\in \bm{V}(\Omega).
\end{equation*}

The linear mapping $L_M^\sharp:\bm{V}_M^\sharp(\omega) \to \mathbb{R}$, whose definition will be introduced in the proof of the next theorem, accounts for the asymptotic behaviour of the admissible applied body forces $F^{ij}(\varepsilon)$ introduced beforehand.
Finally, we define the set
\begin{equation*}
\bm{U}(\omega):=\{\bm{\eta}=(\eta_i) \in \bm{H}^1(\omega); \eta_i=0 \textup{ on }\gamma_0 \textup{ and } (\bm{\theta}(y)+\eta_i(y)\bm{a}^i(y))\cdot\bm{q} \ge 0 \textup{ for a.a. }y\in \omega\},
\end{equation*}
and define the corresponding abstract completion with respect to the norm $|\cdot|_\omega^M$ as follows:
\begin{equation*}
	\bm{U}_M^\sharp(\omega):=\overline{\bm{U}(\omega)}^{|\cdot|_\omega^M}.
\end{equation*}

We are thus in a position to state the two-dimensional limit problem, denoted by~$\mathcal{P}_M^\sharp(\omega)$, as follows:

\begin{customprob}{$\mathcal{P}_M^\sharp(\omega)$}
\label{problemLim}
Find $\bm{\zeta} \in \bm{U}_M^\sharp(\omega)$ that satisfies the following variational inequalities:
\begin{equation*}
B_M^\sharp(\bm{\zeta},\bm{\eta}-\bm{\zeta}) \ge L_M^\sharp(\bm{\eta}-\bm{\zeta}),\quad\textup{ for all }\bm{\eta} \in \bm{U}_M^\sharp(\omega).
\end{equation*}
\bqed
\end{customprob}

By contrast with the cases where the linearly elastic shell under consideration is a linearly elastic elliptic membrane shells~\cite{CiaMarPie2018b,CiaMarPie2018} or a linearly elastic flexural shell~\cite{Pie2023}, the existence and uniqueness of solutions for Problem~\ref{problemLim} is not straightforward, as it relies on a preliminary estimate that we will establish in the forthcoming theorem, which constitutes the first new result contained in this paper.

First, we need a preparatory lemma, whose proof hinges on the Kirchhoff-Love assumptions, thanks to which it was possible to relate the confinement condition considered in this paper with the classical Signorini condition (cf. Lemma~\ref{Signorini}).
Since we consider infinitesimal displacements, it is licit to \emph{assume} that if
	\begin{equation}
		\label{dpcmp}
		\min_{y\in \overline{\omega}}(\bm{a}^3(y)\cdot\bm{q})>0,
	\end{equation}
	then there exists a constant $M>0$ independent of the choice of the admissible displacement $\bm{\eta}$ for which the following relation holds as well
	\begin{equation}
		\label{a3eta}
		\bm{a}_3(\bm{\eta})(y)\cdot\bm{q} \ge M,\quad\textup{ for a.a. } y\in\omega,
	\end{equation}
	where $\bm{a}_3(\bm{\eta})$ is the usual unit vector normal to the deformed middle surface corresponding to the displacement occurring at the middle surface incurs. We recall that the relation~\eqref{dpcmp} is one of the sufficient conditions ensuring the ``density property'' formulated in~\cite{CiaMarPie2018b,CiaMarPie2018}. We also observe that condition~\eqref{a3eta} holds when $\overline{\bm{u}(\varepsilon)} \in \bm{H}^3(\omega)$. However, in general, the latter higher regularity does not hold~\cite{Uraltseva1987}, and we thus assume that~\eqref{dpcmp} implies~\eqref{a3eta} since the admissible displacement fields are assumed to be ``small''.
	
	We also observe that assuming that the solution of Problem~\ref{problem0scaled} satisfies the Kirchhoff-Love assumptions is a judicious assumption in light of the error estimates established by Lods \& Mardare~\cite{LodsMar2000}.

\begin{lemma}
	\label{average-constraint}
	Let $\bm{u}(\varepsilon) \in \bm{U}(\varepsilon;\Omega)$ be the unique solution of Problem~\ref{problem0scaled}. 
	Assume that $\bm{u}(\varepsilon)$ satisfies the Kirchhoff-Love assumptions.
	Assume that:
	\begin{equation*}
		\min_{y\in \overline{\omega}}(\bm{a}^3(y)\cdot\bm{q})>0.
	\end{equation*}
	
	Then, it follows that $\overline{\bm{u}(\varepsilon)} \in \bm{U}(\omega)$.
\end{lemma}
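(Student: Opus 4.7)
The plan is to combine the Kirchhoff--Love structural representation of the displacement with a simple averaging in the transverse variable $x_3$. Writing $\bm{\eta}:=\overline{\bm{u}(\varepsilon)}$, the Kirchhoff--Love hypothesis --- already invoked in Lemma~\ref{Signorini} and, in the infinitesimal regime, rendered licit by~\eqref{dpcmp} together with its consequence~\eqref{a3eta} --- asserts that the deformation sends each point $(y,x_3)\in\overline{\Omega}$ onto $\bm{\theta}(y)+\eta_i(y)\bm{a}^i(y)+\varepsilon x_3\bm{a}_3(\bm{\eta})(y)$, where $\bm{a}_3(\bm{\eta})$ is the unit vector normal to the deformed middle surface. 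Taking the inner product with $\bm{q}$ and using that $\bm{u}(\varepsilon)\in\bm{U}(\varepsilon;\Omega)$, I obtain
$$
\bigl(\bm{\theta}(y)+\eta_i(y)\bm{a}^i(y)\bigr)\cdot\bm{q}+\varepsilon x_3\,\bm{a}_3(\bm{\eta})(y)\cdot\bm{q}\ge 0\qquad\text{for a.a.\ }(y,x_3)\in\Omega.
$$

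The next step is to integrate this inequality with respect to $x_3\in[-1,1]$. By Fubini the integrand is a.e.\ non-negative in $x_3$ for a.a.\ $y\in\omega$, the linear-in-$x_3$ term integrates to zero, and dividing by $2$ yields
$$
(\bm{\theta}(y)+\eta_i(y)\bm{a}^i(y))\cdot\bm{q}\ge 0\qquad\text{for a.a.\ }y\in\omega.
$$
The trace condition $\bm{\eta}=\bm{0}$ on $\gamma_0$ is an immediate consequence of $\bm{u}(\varepsilon)=\bm{0}$ on $\gamma_0\times(-1,1)$ and the linearity of the averaging operator. Taken together, these two properties give $\bm{\eta}\in\bm{U}(\omega)$, which is the desired conclusion. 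As a side remark, evaluating the same inequality at the extremity $x_3=-1$ and invoking~\eqref{a3eta} would in fact yield the sharper bound $(\bm{\theta}(y)+\eta_i(y)\bm{a}^i(y))\cdot\bm{q}\ge\varepsilon M>0$; this clarifies why the hypothesis on $\bm{a}^3\cdot\bm{q}$ is stated, as it ensures, in the infinitesimal regime, the non-degeneracy of $\bm{a}_3(\bm{\eta})\cdot\bm{q}$ needed to keep the Kirchhoff--Love ansatz geometrically meaningful.

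The main obstacle I anticipate is the rigorous justification of the Kirchhoff--Love representation for an $\bm{H}^1$ solution. The geometric picture is naturally formulated for smooth displacements, as in the proof of Lemma~\ref{Signorini}, whereas $\bm{u}(\varepsilon)$ only enjoys Sobolev regularity. The ideal substitute would be $\overline{\bm{u}(\varepsilon)}\in\bm{H}^3(\omega)$, which by Rellich--Kondra\v{s}ov would give continuity of $\bm{a}_3(\bm{\eta})$ on $\overline{\omega}$ and automatically propagate the sign condition from $\bm{a}^3\cdot\bm{q}$ to $\bm{a}_3(\bm{\eta})\cdot\bm{q}$. Since such higher regularity is not available in general (cf.~\cite{Uraltseva1987}), I would follow the paper and simply postulate~\eqref{a3eta}; with this in hand, the averaging argument sketched above then delivers the claim without further technical difficulty.
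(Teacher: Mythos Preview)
There is a genuine gap in your identification of the midsurface field. The Kirchhoff--Love representation (cf.\ page~372 of~\cite{Ciarlet2000}, quoted in the paper's proof) reads
\[
u_i(\varepsilon)\,\bm{g}^i(\varepsilon)=u_i(\varepsilon)(\cdot,0)\,\bm{a}^i+\varepsilon x_3\bigl\{\bm{a}_3(\bm{u}(\varepsilon)(\cdot,0))-\bm{a}_3\bigr\},
\]
so the field playing the role of your $\bm{\eta}$ is the \emph{trace} $\bm{u}(\varepsilon)(\cdot,0)$, not the \emph{average} $\overline{\bm{u}(\varepsilon)}$. Because the covariant components $u_i(\varepsilon)$ are taken with respect to the $x_3$-dependent basis $\bm{g}^i(\varepsilon)$ (recall $\bm{g}_\beta(\varepsilon)=\bm{a}_\beta-\varepsilon x_3 b^\sigma_\beta\bm{a}_\sigma$), the tangential components acquire a term quadratic in $x_3$, and one finds
\[
\overline{u_\beta(\varepsilon)}=u_\beta(\varepsilon)(\cdot,0)-\tfrac{2}{3}\varepsilon^2\, b_{\beta\tau}\,\bm{a}_3(\bm{u}(\varepsilon)(\cdot,0))\cdot\bm{a}^\tau,\qquad
\overline{u_3(\varepsilon)}=u_3(\varepsilon)(\cdot,0).
\]
Your averaging step therefore only delivers $(\bm{\theta}+u_i(\varepsilon)(\cdot,0)\,\bm{a}^i)\cdot\bm{q}\ge 0$, which is \emph{not} the statement $(\bm{\theta}+\overline{u_i(\varepsilon)}\,\bm{a}^i)\cdot\bm{q}\ge 0$ that the lemma requires; the two differ by an $O(\varepsilon^2)$ term with no definite sign.

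The paper closes precisely this gap by writing
\[
(\bm{\theta}+\overline{u_i(\varepsilon)}\,\bm{a}^i)\cdot\bm{q}
=\underbrace{(\bm{\theta}+u_i(\varepsilon)(\cdot,0)\,\bm{a}^i-\varepsilon\,\bm{a}_3(\bm{u}(\varepsilon)(\cdot,0)))\cdot\bm{q}}_{\text{lower face }\ge 0\ \text{by Lemma~\ref{Signorini}}}
+\underbrace{\varepsilon\,\bm{a}_3(\bm{u}(\varepsilon)(\cdot,0))\cdot\bm{q}}_{\ge\,\varepsilon M\ \text{by~\eqref{a3eta}}}
-\tfrac{2}{3}\varepsilon^2\,(\cdots),
\]
so that the positive margin $\varepsilon M$ absorbs the $O(\varepsilon^2)$ correction for $\varepsilon$ small enough. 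Your ``side remark'' about the sharper bound $\varepsilon M$ is in fact the essential mechanism, not an afterthought: without it you cannot pass from the trace to the average. If you rewrite your argument with $\bm{\eta}:=\bm{u}(\varepsilon)(\cdot,0)$, use the lower-face evaluation (rather than the $x_3$-average) to generate the $\varepsilon M$ cushion, and then control the $O(\varepsilon^2)$ discrepancy between trace and average, you recover exactly the paper's proof.
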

\begin{proof}
	Thanks to the Kirchhoff-Love assumptions, the following formula holds (cf., e.g., page~372 in~\cite{Ciarlet2000}):
	\begin{equation*}
		u_i(\varepsilon)\bm{g}^i(\varepsilon)=u_i(\varepsilon)(\cdot,0)\bm{a}^i+\varepsilon x_3 \{\bm{a}_3(\bm{u}(\varepsilon)(\cdot,0))-\bm{a}_3\},\quad\textup{ a.e. in } \Omega.
	\end{equation*}
	
	The properties of the covariant and contravariant bases (cf., e.g., Lemma~\ref{lem:2} and~\cite{Ciarlet2005}) give at once that:
	\begin{equation*}
		\begin{aligned}
			u_\beta(\varepsilon)&=u_\beta(\varepsilon)(\cdot,0) -\varepsilon x_3 u_i(\varepsilon)(\cdot,0) b_{\beta\tau} a^{i\tau}+\varepsilon x_3 \bm{a}_3(\bm{u}(\varepsilon)(\cdot,0))\cdot\bm{a}_\beta\\
			&\quad-\varepsilon^2 x_3^2 b_{\beta\tau}\bm{a}_3(\bm{u}(\varepsilon)(\cdot,0))\cdot\bm{a}^\tau,\\
			u_3(\varepsilon)&=u_3(\varepsilon)(\cdot,0)+\varepsilon x_3 \bm{a}_3(\bm{u}(\varepsilon)(\cdot,0)) \cdot \bm{a}_3 -\varepsilon x_3.
		\end{aligned}
	\end{equation*}
	
	Therefore, the average of $\bm{u}(\varepsilon)$ satisfies the following relations:
	\begin{equation}
		\label{avg}
		\begin{aligned}
			\overline{u_\beta(\varepsilon)}&=u_\beta(\varepsilon)(\cdot,0)-\dfrac{1}{3}\varepsilon^2 b_{\beta\tau} \bm{a}_3(\bm{u}(\varepsilon)(\cdot,0)) \cdot\bm{a}^\tau,\\
			\overline{u_3(\varepsilon)}&=u_3(\varepsilon)(\cdot,0).
		\end{aligned}
	\end{equation}
	
	Note also that $\overline{\bm{u}(\varepsilon)}=\bm{0}$ on $\gamma_0$ since $\bm{u}(\varepsilon)\in\bm{U}(\varepsilon;\Omega)$ vanishes on $\Gamma_0$.
	In view of~\eqref{avg}, the following relations hold for a.a. $y\in\omega$:
	\begin{equation*}
		\begin{aligned}
			&\left(\bm{\theta}(y)+\overline{u_i(\varepsilon)}(y)\bm{a}^i(y)\right)\cdot\bm{q}=\left(\bm{\theta}(y)+u_i(\varepsilon)(y,0)\bm{a}^i\right)\cdot\bm{q}-\dfrac{1}{3}\varepsilon^2 b_{\beta\tau}(\bm{a}_3(\bm{u}(\varepsilon)(y,0)) \cdot\bm{a}^\tau) \bm{a}^\beta\cdot\bm{q}\\
			&=\left(\bm{\theta}(y)+u_i(\varepsilon)(y,0)\bm{a}^i-\varepsilon\bm{a}_3(\bm{u}(\varepsilon)(y,0))\right)\cdot\bm{q}+\varepsilon\bm{a}_3(\bm{u}(\varepsilon)(y,0))\cdot\bm{q}-\dfrac{1}{3}\varepsilon^2 b_{\beta\tau}(\bm{a}_3(\bm{u}(\varepsilon)(y,0)) \cdot\bm{a}^\tau) (\bm{a}^\beta\cdot\bm{q}).
		\end{aligned}
	\end{equation*}
	
	In light of the Kirchhoff-Love assumptions, we observe that the first term on the right-hand side of the last expression represents the deformation of the lower face of the shell at the given point $y \in \omega$, since all the points along a normal to the middle surface line remain on a normal to the deformed middle surface. Therefore, by Lemma~\ref{Signorini}:
	\begin{equation*}
		\left(\bm{\theta}(y)+u_i(\varepsilon)(y,0)\bm{a}^i-\varepsilon\bm{a}_3(\bm{u}(\varepsilon)(y,0))\right)\cdot\bm{q}\ge 0,\quad\textup{ for a.a. }y\in\omega.
	\end{equation*}
	
	In light of~\eqref{a3eta}, we infer that:
	\begin{equation*}
		\bm{a}_3(\bm{u}(\varepsilon)(y,0))\cdot\bm{q} \ge M, \quad\textup{ for a.a. }y\in\omega,
	\end{equation*}
	where the constant $M$ is independent of the displacement and is therefore independent of $\varepsilon$.
	
	Consequently, if we choose $\varepsilon>0$ sufficiently small, the smoothness of $b_{\beta\tau}$, the smoothness of $\bm{a}^\tau$, and the fact that $\bm{a}_3(\bm{u}(\varepsilon)(y,0))$ has unit norm (cf., e.g., formula~(1.3.4) in~\cite{BerBoi1982}) combined imply that
	\begin{equation*}
		\left(\bm{\theta}(y)+\overline{u_i(\varepsilon)}(y)\bm{a}^i(y)\right)\cdot\bm{q}\ge 0, \quad\textup{ for a.a. }y\in\omega,
	\end{equation*}
	as it was to be proved.
\end{proof}

We now establish the first main result of this paper.

\begin{theorem}
\label{asymptotics}
Assume that $\bm{\theta} \in \mathcal{C}^3(\overline{\omega};\mathbb{E}^3)$ is an injective immersion. Consider a family of linearly elastic generalised membrane shells of the first kind, each of which having the same middle surface $\bm{\theta}(\overline{\omega})$, and thickness $2\varepsilon$ approaching zero. Assume that each of the linearly elastic generalised membrane shells of the first kind under consideration is subjected to a boundary condition of place along a portion of its lateral face having $\bm{\theta}(\gamma_0)$ as its middle curve, and that each one of these shells is subjected to applied body forces that are admissible (cf. section~\ref{Sec:3}).

Assume that~\eqref{dpcmp} holds, namely:
\begin{equation*}
	\min_{y \in \overline{\omega}} (\bm{a}^3(y)\cdot\bm{q}) >0.
\end{equation*}

For each $0<\varepsilon\le \varepsilon_0$, let $\bm{u}(\varepsilon) \in \bm{U}(\Omega)$ denote the unique solution of Problem~\ref{problem0scaled}.
Assume that $\bm{u}(\varepsilon)$ satisfies the Kirchhoff-Love assumptions.
Define the space
\begin{equation*}
\bm{V}_M^\sharp(\Omega):=\overline{\bm{V}(\Omega)}^{|\cdot|_\Omega^M},
\end{equation*}
and define the set
\begin{equation*}
	\bm{U}_M^\sharp(\omega):=\overline{\bm{U}(\omega)}^{|\cdot|_\omega^M} \subset \bm{V}_M^\sharp(\omega).
\end{equation*}

Then there exist $\bm{u} \in \bm{V}_M^\sharp(\Omega)$ and $\bm{\zeta} \in \bm{U}_M^\sharp(\omega)$ such that:
\begin{equation*}
	\begin{aligned}
		\bm{u}(\varepsilon) &\to \bm{u},\quad\textup{ in }\bm{V}_M^\sharp(\Omega) \textup{ as } \varepsilon\to 0,\\
		\overline{\bm{u}(\varepsilon)} &\to \bm{\zeta},\quad\textup{ in }\bm{V}_M^\sharp(\omega) \textup{ as }\varepsilon\to0.
	\end{aligned}
\end{equation*}

Define
\begin{equation*}
	\begin{aligned}
		a^{\alpha\beta\sigma\tau}&:=\dfrac{4\lambda\mu}{\lambda+2\mu}a^{\alpha\beta}a^{\sigma\tau}+2\mu(a^{\alpha\sigma}a^{\beta\tau}+a^{\alpha\tau}a^{\beta\sigma}),\\
		\varphi^{\alpha\beta}&:=\int_{-1}^{1} \left\{F^{\alpha\beta}-\dfrac{\lambda}{\lambda+2\mu}a^{\alpha\beta}F^{33}\right\} \dd x_3 \in L^2(\omega),\\
		L_M(\bm{\eta})&:=\int_{\omega} \varphi^{\alpha\beta} \gamma_{\alpha\beta}(\bm{\eta})\sqrt{a} \dd y,\quad\textup{ for all }\bm{\eta} \in\bm{V}(\omega),
	\end{aligned}
\end{equation*}
where the functions $F^{ij} \in L^2(\Omega)$ are those entering the definition of admissible forces, and $B_M^\sharp$ and $L_M^\sharp$ denote the unique continuous extensions from $\bm{V}(\omega)$ to $\bm{V}_M^\sharp(\omega)$ of the bilinear form $B_M(\cdot,\cdot)$ and of the linear form $L_M$, respectively.
Then, the limit $\bm{\zeta}$ belongs to the set $\bm{U}_M^\sharp(\omega)$, and is the unique solution of Problem~\ref{problemLim}.
\end{theorem}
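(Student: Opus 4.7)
The plan is to adapt the Ciarlet--Lods asymptotic analysis for generalised membrane shells of the first kind (cf. Theorem~5.6-1 of~\cite{Ciarlet2000}) to the present obstacle setting. The first step is to derive uniform \emph{a priori} estimates: testing Problem~\ref{problem0scaled} with $\bm{v} = \bm{0}$ is legitimate since the undeformed reference configuration lies in $\mathbb{H}$, and the resulting inequality, combined with the uniform ellipticity of $A^{ijk\ell}(\varepsilon)$ from Lemma~\ref{lem:2} and the admissibility bound~\eqref{adm-2}, yields
\begin{equation*}
\sum_{i,j}\left\|e_{i\|j}(\varepsilon;\bm{u}(\varepsilon))\right\|_{L^2(\Omega)}^2 \le C,
\end{equation*}
with $C$ independent of $\varepsilon$. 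I emphasise that Theorem~\ref{t:4} cannot be invoked here to produce a uniform $\bm{H}^1(\Omega)$ bound because the factor $1/\varepsilon$ is not controlled; this is precisely the reason one must pass to the abstract completions $\bm{V}_M^\sharp(\Omega)$ and $\bm{V}_M^\sharp(\omega)$.

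Next I would establish that $(\bm{u}(\varepsilon))$ is Cauchy in $(\bm{V}(\Omega),|\cdot|_\Omega^M)$ by reproducing the identification-of-weak-limits argument of Sections~5.5--5.6 of~\cite{Ciarlet2000}, which uses the uniform strain bound above together with the fact that, for generalised membrane shells of the first kind, $|\cdot|_\omega^M$ is already a norm on $\bm{V}(\omega)$. This gives $\bm{u}(\varepsilon) \to \bm{u}$ in $\bm{V}_M^\sharp(\Omega)$, and hence $\overline{\bm{u}(\varepsilon)} \to \bm{\zeta}$ in $\bm{V}_M^\sharp(\omega)$ through the transverse averaging operator. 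The new ingredient beyond the obstacle-free analysis is Lemma~\ref{average-constraint}: assumption~\eqref{dpcmp} guarantees $\overline{\bm{u}(\varepsilon)} \in \bm{U}(\omega)$ for every sufficiently small $\varepsilon$, so that the limit $\bm{\zeta}$ lies in $\bm{U}_M^\sharp(\omega)$ directly from the definition of that closure.

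To identify the limit problem, I would test Problem~\ref{problem0scaled} against Kirchhoff--Love competitors built from arbitrary $\bm{\eta} \in \bm{U}(\omega)$, designed so as to lie in $\bm{U}(\varepsilon;\Omega)$ for all small $\varepsilon$ and to converge (together with their scaled strains) to the expected two-dimensional limits. Expanding $A^{ijk\ell}(\varepsilon)$, $\Gamma^p_{ij}(\varepsilon)$, and $\sqrt{g(\varepsilon)}$ via Lemma~\ref{lem:2}, performing the standard static condensation that eliminates the transverse strain components and produces the reduced tensor $a^{\alpha\beta\sigma\tau}$, and passing to the limit in the scaled variational inequality yields $B_M(\bm{\zeta},\bm{\eta}-\bm{\zeta}) \ge L_M(\bm{\eta}-\bm{\zeta})$ first for $\bm{\eta} \in \bm{U}(\omega)$. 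Density of $\bm{U}(\omega)$ in $\bm{U}_M^\sharp(\omega)$ with respect to $|\cdot|_\omega^M$, together with the continuity of the extensions $B_M^\sharp$ and $L_M^\sharp$, extends the inequality to all $\bm{\eta} \in \bm{U}_M^\sharp(\omega)$. Strict convexity of $B_M^\sharp$ on the Hilbert space $\bm{V}_M^\sharp(\omega)$, combined with the fact that $\bm{U}_M^\sharp(\omega)$ is a non-empty closed convex subset, yields uniqueness, which upgrades subsequential convergence to convergence of the full families.

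The main obstacle will be twofold. Analytically, passing from the uniform strain bound to Cauchyness in the completion semi-norm $|\cdot|_\Omega^M$ requires the delicate limit-strain identification of Ciarlet and Lods, which in the present context must be executed compatibly with the pointwise inequality constraint rather than in the unconstrained linear setting. Geometrically, one must construct admissible Kirchhoff--Love lifts of elements of $\bm{U}(\omega)$ that actually lie in $\bm{U}(\varepsilon;\Omega)$ for all small $\varepsilon$: since the constraint in $\bm{U}(\varepsilon;\Omega)$ involves every point $x \in \Omega$ and not only the midsurface $x_3 = 0$, the transverse term $\varepsilon x_3 \bm{a}_3(y)$ combined with the displacement lift must be engineered so as to preserve the half-space condition, which is exactly where assumption~\eqref{dpcmp} is used a second time to supply the requisite margin.
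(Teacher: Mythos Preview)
Your proposal is essentially the paper's own strategy: uniform strain bounds via $\bm{v}=\bm{0}$, weak subsequential limits in the abstract completions, Lemma~\ref{average-constraint} to place $\overline{\bm{u}(\varepsilon)}$ in $\bm{U}(\omega)$, construction of admissible three-dimensional lifts of elements of $\bm{U}(\omega)$ to derive the limit variational inequality, and a strong-convergence upgrade that, combined with uniqueness of the limit problem, gives convergence of the full family. The paper organises this into ten steps closely following Theorem~5.6-1 of~\cite{Ciarlet2000}, with the extra work concentrated precisely where you anticipate (handling the constraint in steps (ii), (iii), (vi), and (viii)).

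One point deserves correction. You predict that assumption~\eqref{dpcmp} is used a \emph{second} time to produce the margin needed for the admissible Kirchhoff--Love lifts. In the paper it is not: the lift of $\bm{\eta}\in\bm{U}(\omega)$ is taken as
\[
v_i(\varepsilon;\bm{\eta}):=\bigl[(1-\sqrt{\varepsilon})\,\eta_j\bm{a}^j\bigr]\cdot\bm{g}_i(\varepsilon),
\]
and admissibility in $\bm{U}(\varepsilon;\Omega)$ follows from the decomposition
\[
(1-\sqrt{\varepsilon})\underbrace{(\bm{\theta}+\eta_j\bm{a}^j)\cdot\bm{q}}_{\ge 0}
+\sqrt{\varepsilon}\bigl(\underbrace{\bm{\theta}\cdot\bm{q}}_{\ge d>0}+\sqrt{\varepsilon}\,x_3\,\bm{a}^3\cdot\bm{q}\bigr)\ge 0
\]
for $\varepsilon$ small. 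The margin therefore comes from Lemma~\ref{lem:1} ($d=\min_{\overline{\omega}}\bm{\theta}\cdot\bm{q}>0$) together with the $\sqrt{\varepsilon}$ damping, not from~\eqref{dpcmp}. Assumption~\eqref{dpcmp} is used only once, through Lemma~\ref{average-constraint}, to guarantee $\overline{\bm{u}(\varepsilon)}\in\bm{U}(\omega)$ so that the weak limit $\bm{\zeta}$ indeed lies in $\bm{U}_M^\sharp(\omega)$ (the paper also checks in step~(ii) that this set is convex, hence weakly closed). Your phrasing ``Cauchy in $|\cdot|_\Omega^M$'' is slightly off as a description of the mechanism: the paper extracts weakly convergent subsequences first and only afterwards upgrades to strong convergence by sandwiching $\sum_{i,j}\|e_{i\|j}(\varepsilon)-e_{i\|j}\|_{L^2(\Omega)}^2$ between $0$ and the limit inequality evaluated at an approximating sequence $\bm{\zeta}_\ell\to\bm{\zeta}$ in $\bm{U}(\omega)$; but this is a cosmetic difference, not a gap.
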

\begin{proof}
In what follows, we let for brevity:
$$
e_{i\|j}(\varepsilon):=e_{i\|j}(\varepsilon;\bm{u}(\varepsilon)).
$$

The outline of the proof, which is broken into ten parts numbered (i)--(x), is inspired by the proof of Theorem~5.6-1 of~\cite{Ciarlet2000} (itself adapted from the original result by Ciarlet \& Lods contained in~\cite{CiaLods1996d}), where no confinement conditions are imposed. We will see that taking into account the confinement condition requires extra care in some parts of the asymptotic analysis.

(i) \emph{There exists $0<\varepsilon_1 \le \varepsilon_0$, and there exists a constant $c_0>0$ independent of $\varepsilon$ such that, for all $0<\varepsilon\le\varepsilon_1$, we have that:
\begin{equation*}
	|\bm{v}|_\Omega^M \le c_0 \left\{\sum_{i,j}\|e_{i\|j}(\varepsilon;\bm{v})\|_{L^2(\Omega)}^2\right\}^{1/2},\quad\textup{ for all }\bm{v} \in \bm{V}(\Omega).
\end{equation*}
}

The proof is the same as that of part~(i) in Theorem~5.6-1 of~\cite{Ciarlet2000} and, for this reason, is omitted.

(ii) \emph{The semi-norms $|\bm{u}(\varepsilon)|_\Omega^M$, $\left|\overline{\bm{u}(\varepsilon)}\right|_\omega^M$, and the norms $\|\varepsilon\bm{u}(\varepsilon)\|_{\bm{H}^1(\Omega)}$ and $\|e_{i\|j}(\varepsilon)\|_{L^2(\Omega)}$ are bounded independently of $0<\varepsilon\le\varepsilon_1$. Therefore, up to a subsequence, there exist $\bm{u}\in\bm{V}_M^\sharp(\Omega)$, $\bm{u}^{-1}=(u_i^{(-1)}) \in \bm{V}(\Omega)$, $e_{i\|j} \in L^2(\Omega)$, and $\bm{\zeta} \in \bm{U}_M^\sharp(\omega)$ for which the following convergences take place as $\varepsilon\to0$:
\begin{equation*}
	\begin{aligned}
		\bm{u}(\varepsilon) &\rightharpoonup \bm{u},\quad\textup{ in } \bm{V}_M^\sharp(\Omega),\\
		\varepsilon\bm{u}(\varepsilon) &\rightharpoonup \bm{u}^{(-1)},\quad\textup{ in } \bm{H}^1(\Omega),\\
		e_{i\|j}(\varepsilon) &\rightharpoonup e_{i\|j},\quad\textup{ in } \bm{L}^2(\Omega),\\
		\partial_3 u_3(\varepsilon) &=\varepsilon e_{3\|3}(\varepsilon) \to 0,\quad\textup{ in } L^2(\Omega),\\
		\overline{\bm{u}(\varepsilon)} &\rightharpoonup \bm{\zeta},\quad\textup{ in }\bm{U}_M^\sharp(\omega).
	\end{aligned}
\end{equation*}
}

Letting $\bm{v}=\bm{0}$ in the variational inequalities of Problem~\ref{problem0scaled}, and recalling that the applied body forces are admissible (see~\eqref{adm-1} and~\eqref{adm-2}), we obtain, as a consequence of part~(i), that
\begin{equation*}
\begin{aligned}
	&\dfrac{\sqrt{g_0}}{C_e c_0^2}\left(|\bm{u}(\varepsilon)|_\Omega^M\right)^2 \le \dfrac{\sqrt{g_0}}{C_e} \sum_{i,j}\|e_{i\|j}(\varepsilon)\|_{L^2(\Omega)}^2 \le \int_{\Omega} A^{ijk\ell}(\varepsilon) e_{k\|\ell}(\varepsilon) e_{i\|j}(\varepsilon) \sqrt{g(\varepsilon)} \dd x\\
	&\le \int_{\Omega} F^{ij}(\varepsilon) e_{i\|j}(\varepsilon) \sqrt{g(\varepsilon)} \dd x \le \kappa_0 \sqrt{g_1} \left\{\sum_{i,j} \|e_{i\|j}(\varepsilon)\|_{L^2(\Omega)}^2\right\}^{1/2},
\end{aligned}
\end{equation*}
for some $\kappa_0>0$ independent of $\varepsilon$. As a result,
\begin{equation*}
	\begin{aligned}
		\left\{\|e_{i\|j}(\varepsilon)\|_{L^2(\Omega)}\right\}_{\varepsilon>0} &\textup{ is bounded independently of } 0<\varepsilon\le \varepsilon_1,\\
		\left\{|\bm{u}(\varepsilon)|_\Omega^M\right\}_{\varepsilon>0} &\textup{ is bounded independently of } 0<\varepsilon\le \varepsilon_1.
	\end{aligned}
\end{equation*}

An application of Theorem~4.2-1 in~\cite{Ciarlet2000} gives that
\begin{equation*}
	\left|\overline{\bm{u}(\varepsilon)}\right|_\omega^M \le |\bm{u}(\varepsilon)|_\Omega^M,
\end{equation*}
and an application of Theorem~\ref{t:4} implies the boundedness of the sequence $\{\varepsilon\|\bm{u}(\varepsilon)\|_{\bm{H}^1(\Omega)}\}_{\varepsilon>0}$ independently of $0<\varepsilon\le\varepsilon_1$.

In order to show that $\bm{\zeta} \in \bm{U}_M^\sharp(\omega)$, it suffices to verify that $\bm{U}_M^\sharp(\omega)$ is convex (note that $\bm{U}_M^\sharp(\omega)$ is - by definition - (strongly) closed with respect to the norm $|\cdot|_\omega^M$), so that a classical result in Functional Analysis ensures that $\bm{U}_M^\sharp(\omega)$ is also \emph{weakly closed} (cf., e.g., \cite{Brez11}).

Let $\bm{\zeta}_1, \bm{\zeta}_2 \in \bm{U}_M^\sharp(\omega)$, and let $0\le r\le 1$.
Consider sequences $\{\bm{\zeta}_1^{(k)}\}_{k\ge 0}$ and $\{\bm{\zeta}_2^{(k)}\}_{k\ge0}$ in $\bm{U}(\omega)$ that satisfy:
\begin{equation*}
	\begin{aligned}
		|\bm{\zeta}_1^{(k)}-\bm{\zeta}_1|_\omega^M & \to 0, \quad\textup{ as } k\to\infty,\\
		|\bm{\zeta}_2^{(k)}-\bm{\zeta}_2|_\omega^M & \to 0, \quad\textup{ as } k\to\infty.
	\end{aligned}
\end{equation*}

An application of the triangle inequality gives:
\begin{equation*}
	\left|\left(r \bm{\zeta}_1^{(k)}+(1-r)\bm{\zeta}_2^{(k)}\right) - \left(r \bm{\zeta}_1+(1-r)\bm{\zeta}_2\right)\right|_\omega^M \le r |\bm{\zeta}_1^{(k)}-\bm{\zeta}_1|_\omega^M+(1-r) |\bm{\zeta}_2^{(k)}-\bm{\zeta}_2|_\omega^M \to 0, \quad\textup{ as }k\to\infty.
\end{equation*}

Combining the previous convergence and the convexity of the set $\bm{U}(\omega)$ shows that
\begin{equation*}
	\left(r \bm{\zeta}_1+(1-r)\bm{\zeta}_2\right) \in \bm{U}_M^\sharp(\omega),
\end{equation*}
and the sought convexity is thus obtained, showing that $\bm{U}_M^\sharp(\omega)$ is weakly closed.

As a result, combining the fact that the elements $\overline{\bm{u}(\varepsilon)} \in \bm{U}(\omega)$ (cf. Lemma~\ref{average-constraint}) are bounded independently of $0<\varepsilon\le \varepsilon_1$ with respect to the norm $|\cdot|_\omega^M$ with the fact that $\bm{U}_M^\sharp(\omega)$ is weakly closed in turn gives that
\begin{equation*}
	\overline{\bm{u}(\varepsilon)} \rightharpoonup \bm{\zeta},\quad \textup{ in }\bm{U}_M^\sharp(\omega)\textup{ as }\varepsilon\to 0.
\end{equation*}

(iii) \emph{There exists a constant $0<\varepsilon_2\le \varepsilon_1$ such that, given any $0 < \varepsilon \le \varepsilon_2$ and any vector field $\bm{\varphi}=(\varphi_i)\in\boldsymbol{\mathcal{D}}(\Omega)$, one can find a vector field $\bm{v}(\varepsilon;\bm{\varphi})= (v_i(\varepsilon;\bm{\varphi})):\Omega\to\mathbb{R}^3$ with the following properties:}
$$
\bm{v}(\varepsilon;\bm{\varphi}) \in \bm{U}(\varepsilon;\Omega) \quad \textup{ and } \quad \partial_3 v_i (\varepsilon;\bm{\varphi}) = \varphi_i \textup{ in } \Omega, \textup{ for all } 0 <\varepsilon\le\varepsilon_2.
$$

\emph{Moreover, there exists a constant $C(\bm{\varphi})$ such that:}
$$
\|\bm{v} (\varepsilon; \bm{\varphi})\|_{\bm{H}^1(\Omega)} \le C(\bm{\varphi}),\quad \textup{ for all } 0 < \varepsilon \le \varepsilon_2.
$$

The proof is identical to that of part~(iv) in~\cite{CiaMarPie2018} and, for this reason, is omitted.

(iv) \emph{The weak limits $e_{i\|j} \in L^2(\Omega)$ found in part~(ii) take the following forms:}
\begin{equation*}
	\begin{aligned}
		e_{\alpha\|3}&=\dfrac{1}{2\mu} a_{\alpha\beta} F^{\beta3},\\
		e_{3\|3}&=-\dfrac{\lambda}{\lambda+2\mu}a^{\alpha\beta}e_{\alpha\|\beta}+\dfrac{F^{33}}{\lambda+2\mu}.
	\end{aligned}
\end{equation*}

Let the vector field $\bm{\varphi}=(\varphi_i) \in \bm{\mathcal{D}}(\Omega)$ be given, for a given $0<\varepsilon\le \varepsilon_2$, let $\bm{v}(\varepsilon;\bm{\varphi})$ be the corresponding vector field defined in part~(iii). Letting $\bm{v}=\bm{v}(\varepsilon;\bm{\varphi})$ in the variational inequalities of Problem~\ref{problem0scaled}, multiplying the aforementioned variational inequalities by $\varepsilon>0$, and recalling that $A^{\alpha\beta\sigma3}(\varepsilon)=A^{\alpha333}(\varepsilon)=0$, we obtain:
\begin{equation}
	\label{varineq1}
	\begin{aligned}
		&\int_{\Omega}A^{\alpha\beta\sigma\tau}(\varepsilon) e_{\sigma\|\tau}(\varepsilon) \left\{\varepsilon e_{\alpha\|\beta}(\varepsilon;\bm{v}(\varepsilon;\bm{\varphi}))-\varepsilon e_{\alpha\|\beta}(\varepsilon)\right\} \sqrt{g(\varepsilon)} \dd x\\
		&\quad+\int_{\Omega} A^{\sigma\tau33}(\varepsilon) e_{3\|3}(\varepsilon)\left\{\varepsilon e_{\sigma\|\tau}(\varepsilon;\bm{v}(\varepsilon;\bm{\varphi})) -\varepsilon e_{\sigma\|\tau}(\varepsilon)\right\} \sqrt{g(\varepsilon)} \dd x\\
		&\quad+4\int_{\Omega}A^{\alpha3\sigma3}(\varepsilon) e_{\sigma\|3}(\varepsilon)\left\{\varepsilon e_{\alpha\|3}(\varepsilon;\bm{v}(\varepsilon;\bm{\varphi}))-\varepsilon e_{\alpha\|3}(\varepsilon)\right\} \sqrt{g(\varepsilon)} \dd x\\
		&\quad+\int_{\Omega}A^{33\sigma\tau}(\varepsilon) e_{\sigma\|\tau}(\varepsilon) \left\{\varepsilon e_{3\|3}(\varepsilon;\bm{v}(\varepsilon;\bm{\varphi})) -\varepsilon e_{3\|3}(\varepsilon)\right\} \sqrt{g(\varepsilon)} \dd x\\
		&\quad+\int_{\Omega}A^{3333}(\varepsilon) e_{3\|3}(\varepsilon) \left\{\varepsilon e_{3\|3}(\varepsilon;\bm{v}(\varepsilon;\bm{\varphi}))-\varepsilon e_{3\|3}(\varepsilon)\right\} \sqrt{g(\varepsilon)} \dd x\\
		&\ge \int_{\Omega} F^{\alpha\beta}(\varepsilon) \left\{\varepsilon e_{\alpha\|\beta}(\varepsilon;\bm{v}(\varepsilon;\bm{\varphi}))-\varepsilon e_{\alpha\|\beta}(\varepsilon)\right\} \sqrt{g(\varepsilon)} \dd x\\
		&\quad+2\int_{\Omega} F^{\alpha3}(\varepsilon)\left\{\varepsilon e_{\alpha\|3}(\varepsilon;\bm{v}(\varepsilon;\bm{\varphi}))-\varepsilon e_{\alpha\|3}(\varepsilon)\right\} \sqrt{g(\varepsilon)} \dd x\\
		&\quad+\int_{\Omega} F^{33}(\varepsilon) \left\{\varepsilon e_{3\|3}(\varepsilon;\bm{v}(\varepsilon;\bm{\varphi}))-\varepsilon e_{3\|3}(\varepsilon)\right\} \sqrt{g(\varepsilon)} \dd x.
	\end{aligned}
\end{equation}

Together, the relations $\partial_3 v_i(\varepsilon;\bm{\varphi})=\varphi_i$ and the boundedness of the norms $\|\bm{v}(\varepsilon;\bm{\varphi})\|_{\bm{H}^1(\Omega)}$ independently of $0<\varepsilon\le\varepsilon_2$ imply that:
\begin{equation}
	\label{conv}
	\begin{aligned}
		\varepsilon e_{\alpha\|\beta}(\varepsilon;\bm{v}(\varepsilon;\bm{\varphi})) & \to 0,\quad\textup{ in }L^2(\Omega),\\
		\varepsilon e_{\alpha\|3}(\varepsilon;\bm{v}(\varepsilon;\bm{\varphi})) & \to \dfrac{\varphi_\alpha}{2},\quad\textup{ in }L^2(\Omega),\\
		\varepsilon e_{3\|3}(\varepsilon;\bm{v}(\varepsilon;\bm{\varphi})) &= \varphi_3,\quad\textup{ a.e. in }\Omega \textup{ for all }0<\varepsilon\le\varepsilon_2.
	\end{aligned}
\end{equation}

Letting $\varepsilon\to0$ in~\eqref{varineq1}, recalling the definition of admissible applied body forces~\eqref{adm-1}, and using the convergences established in part~(ii) and in~\eqref{conv}, we obtain that:
\begin{equation}
\label{varineq2}
\begin{aligned}
	&\int_{\Omega}2\mu a^{\alpha\sigma} e_{\sigma\|3} \varphi_\alpha 
	\sqrt{a} \dd x +\int_{\Omega}\left(\lambda a^{\sigma\tau}e_{\sigma\|\tau}+(\lambda+2\mu)e_{3\|3}\right) \varphi_3 \sqrt{a} \dd x\\
	&\ge \int_{\Omega} F^{\alpha3} \varphi_\alpha \sqrt{a} \dd x + \int_{\Omega} F^{33} \varphi_3 \sqrt{a} \dd x.
\end{aligned}
\end{equation}

As inequality~\eqref{varineq2} holds for any vector field $\bm{\varphi}=(\varphi_i) \in\bm{\mathcal{D}}(\Omega)$, we infer that, if we specialise such vector field in such a way that $\varphi_3=0$ and $\varphi_\alpha$ varies in $\mathcal{D}(\Omega)$, then:
\begin{equation*}
2\mu
\begin{pmatrix}
	a^{11}&a^{12}\\
	a^{12}&a^{22}
\end{pmatrix}
\begin{pmatrix}
	e_{1\|3}\\
	e_{2\|3}
\end{pmatrix}
=
\begin{pmatrix}
F^{13}\\
F^{23}
\end{pmatrix},\quad\textup{ a.e. in }\Omega.
\end{equation*}

The invertibility of the symmetric and positive-definite matrix $(a^{\alpha\beta})$ in turn gives that:
\begin{equation*}
		e_{\alpha\|3}=\dfrac{1}{2\mu} a_{\alpha\beta} F^{\beta3}.
\end{equation*}

The relations
\begin{equation*}
	e_{3\|3}=-\dfrac{\lambda}{\lambda+2\mu}a^{\alpha\beta}e_{\alpha\|\beta}+\dfrac{F^{33}}{\lambda+2\mu},
\end{equation*}
can be likewise established, by letting $\varphi_\alpha=0$ and letting $\varphi_3 \in \mathcal{D}(\Omega)$ vary arbitrarily.

(v) \emph{The whole family $\{\bm{u}(\varepsilon)\}_{\varepsilon>0}$ satisfies:}
\begin{equation*}
	\left\{\overline{e_{\alpha\|\beta}(\varepsilon)} - \gamma_{\alpha\beta}\left(\overline{\bm{u}(\varepsilon)}\right)\right\} \to 0,\quad\textup{ in }L^2(\omega) \textup{ as }\varepsilon\to 0.
\end{equation*}

\emph{Consequently, the subsequence considered in part~(ii) satisfies:}
\begin{equation*}
	\gamma_{\alpha\beta}\left(\overline{\bm{u}(\varepsilon)}\right) \rightharpoonup \overline{e_{\alpha\|\beta}},\quad\textup{ in }L^2(\omega) \textup{ as }\varepsilon\to 0.
\end{equation*}

The proof is identical to that of part~(iv) in Theorem~5.6-1 of~\cite{Ciarlet2000} and is, for this reason, omitted.

(vi) \emph{The subsequence $\{\bm{u}(\varepsilon)\}_{\varepsilon>0}$ found in part~(ii) is such that:}
\begin{equation*}
	\begin{aligned}
		\varepsilon\bm{u}(\varepsilon) &\rightharpoonup \bm{0},\quad\textup{ in }\bm{H}^1(\Omega),\\
		\partial_3u_\alpha(\varepsilon) &\rightharpoonup 0,\quad\textup{ in } L^2(\Omega),
	\end{aligned}
\end{equation*}
\emph{as $\varepsilon\to 0$. Furthermore, the weak limits $e_{\alpha\|\beta}$ are independent of the transverse variable $x_3$.}

The proof is identical to that of part~(vi) of Theorem~5.6-1 of~\cite{Ciarlet2000} and, for this reason, is omitted.

(vii) \emph{The limits $e_{\alpha\|\beta}$ obtained in part~(ii) satisfy the following variational inequalities:}
\begin{equation*}
	\int_{\omega} a^{\alpha\beta\sigma\tau} \overline{e_{\sigma\|\tau}} (\gamma_{\alpha\beta}(\bm{\eta})-\overline{e_{\alpha\|\beta}}) \sqrt{a} \dd y \ge \int_{\omega} \varphi^{\alpha\beta} (\gamma_{\alpha\beta}(\bm{\eta})-\overline{e_{\alpha\|\beta}}) \sqrt{a} \dd y,\quad\textup{ for all }\bm{\eta} \in\bm{U}(\omega),
\end{equation*}
\emph{where}
\begin{equation*}
	\varphi^{\alpha\beta}:=\int_{-1}^{1} \left\{F^{\alpha\beta}-\dfrac{\lambda}{\lambda+2\mu}a^{\alpha\beta} F^{33}\right\} \dd x_3 \in L^2(\omega),
\end{equation*}
\emph{the functions $F^{ij} \in L^2(\Omega)$ being those used in the definition of admissible applied body forces (cf.~\eqref{adm-1} and~\eqref{adm-2}).}

For a given $\bm{\eta}=(\eta_i) \in \bm{U}(\omega)$, and for each $0<\varepsilon\le \varepsilon_2$, define the vector field $\bm{v}(\varepsilon;\bm{\eta})=(v_i(\varepsilon;\bm{\eta}))$ as follows:
\begin{equation*}
	v_i(\varepsilon;\bm{\eta}):=\left[(1-\sqrt{\varepsilon}) \eta_j\bm{a}^j\right]\cdot\bm{g}_i(\varepsilon).
\end{equation*}

Since $\bm{\eta} \in \bm{U}(\omega)$, since the vector fields $\bm{a}^j$ are of class $\bm{\mathcal{C}}^2(\overline{\omega})$, and since the vector fields $\bm{g}_i(\varepsilon)$ are of class $\bm{\mathcal{C}}^1(\overline{\Omega})$, we obtain  (cf., e.g., Proposition~9.4 of~\cite{Brez11}) that $
\bm{v}(\varepsilon;\bm{\eta}) \in \bm{H}^1(\Omega)$. Moreover, the trace of each component $v_i(\varepsilon;\bm{\eta})$ vanishes along $\gamma_0 \times (-1,1)$. That $\bm{\theta} \cdot\bm{q}>0$ in $\overline{\omega}$ implies that, for a.a. $x\in\Omega$,
\begin{equation*}
	\begin{aligned}
		&\left[\bm{\theta}+\varepsilon x_3 \bm{a}^3+v_i(\varepsilon;\bm{\eta})\bm{g}^i(\varepsilon)\right] \cdot\bm{q}\\
		&=\left[\bm{\theta}+\varepsilon x_3 \bm{a}^3+(\eta_j \bm{a}^j \cdot \bm{g}_i(\varepsilon))\bm{g}^i(\varepsilon)\right]\cdot\bm{q}\\
		&\quad-\sqrt{\varepsilon}(\eta_j\bm{a}^j \cdot\bm{g}_i(\varepsilon)) (\bm{g}^i(\varepsilon) \cdot\bm{q})\\
		&=(1-\sqrt{\varepsilon})\left(\bm{\theta}+\eta_j\bm{a}^j\right) \cdot\bm{q} +\sqrt{\varepsilon}(\bm{\theta}\cdot\bm{q}+\sqrt{\varepsilon} x_3 \bm{a}^3\cdot\bm{q}),
	\end{aligned}
\end{equation*}
where $(\bm{\theta}+\eta_j\bm{a}^j)\cdot\bm{q}\ge 0$ a.e. in $\omega$, $\bm{\theta}\cdot\bm{q}>0$ in $\overline{\omega}$, and $\sqrt{\varepsilon}x_3 \bm{a}^3\cdot\bm{q}=\mathcal{O}(\sqrt{\varepsilon})$.
Besides, the latter quantity is greater or equal than zero provided that $\varepsilon$ is sufficiently small. This implies that $\bm{v}(\varepsilon;\bm{\eta}) \in \bm{U}(\varepsilon;\Omega)$.
Since
\begin{equation*}
v_j(\varepsilon;\bm{\eta}) \bm{g}^j(\varepsilon) = \left(\left( (1-\sqrt{\varepsilon}) \eta_i \bm{a}^i\right) \cdot \bm{g}_j (\varepsilon)\right)\bm{g}^j(\varepsilon) = (1-\sqrt{\varepsilon})\eta_j\bm{a}^j, 
\end{equation*}
it follows that:
\begin{equation*}
	\|v_j(\varepsilon;\bm{\eta})\bm{g}^j (\varepsilon)-\eta_j \bm{a}^j\|_{\bm{H}^1(\Omega)} = \sqrt{\varepsilon}\|\eta_i \bm{a}^i\|_{\bm{H}^1(\Omega)} \to 0,\quad \textup{ as } \varepsilon \to 0.
\end{equation*}

This convergence in turn implies that:
\begin{equation*}
	v_i(\varepsilon;\bm{\eta})=\left(v_j (\varepsilon;\bm{\eta}) \bm{g}^j(\varepsilon)\right) \cdot \bm{g}_i(\varepsilon) \to (\eta_j\bm{a}^j) \cdot \bm{a}_i = \eta_i,\quad \textup{ in } H^1(\Omega) \textup{ as } \varepsilon \to 0.
\end{equation*}

Finally, using Lemma~\ref{lem:2} and noting that
\begin{align*}
	v_\alpha (\varepsilon; \bm{\eta}) &= \left( (1-\sqrt{\varepsilon} ) \eta_j \bm{a}^j \right) \cdot \bm{g}_\alpha (\varepsilon) = (1- \sqrt{\varepsilon}) \left( \eta_\alpha - \varepsilon x_3 b^\sigma_\alpha \eta_\sigma \right), \\
	v_3 (\varepsilon; \bm{\eta}) &= \left( (1-\sqrt{\varepsilon} ) \eta_j \bm{a}^j \right) \cdot \bm{g}_3 (\varepsilon) = (1- \sqrt{\varepsilon}) \eta_3,
\end{align*}
we deduce from the definition of the scaled strains $e_{i\|j}(\varepsilon;\bm{v})$, that
\begin{equation}
	\label{conv2}
	\begin{aligned}
	e_{\alpha\|\beta}(\varepsilon; \bm{v}(\varepsilon;\bm{\eta})) &= (1-\sqrt{\varepsilon})\left(\gamma_{\alpha\beta}(\bm{\eta}) + \varepsilon x_3 \mathcal{R}_{\alpha\beta}(\varepsilon;\bm{\eta})\right) \to \gamma_{\alpha\beta}(\bm{\eta}),\quad\textup{ in }L^2(\Omega)\textup{ as }\varepsilon\to 0\\
	e_{\alpha\|3}(\varepsilon;\bm{v}(\varepsilon;\bm{\eta})) &= (1-\sqrt{\varepsilon}) \left(\dfrac{1}{2}\left(b^\sigma_\alpha \eta_\sigma + \partial_\alpha \eta_3\right)+ \varepsilon x_3 \mathcal{R}_{\alpha3}(\varepsilon;\bm{\eta})\right),\\
	e_{3\|3}(\varepsilon;\bm{v}(\varepsilon; \bm{\eta})) &= 0,
\end{aligned}
\end{equation}
for some functions $\mathcal{R}_{\alpha j} (\varepsilon;\bm{\eta}) \in L^2(\Omega)$ that are bounded in $L^2(\Omega)$ independently of $\varepsilon$.

Given $\bm{\eta} \in \bm{U}(\omega)$, letting $\bm{v}=\bm{v}(\varepsilon;\bm{\eta})$ in the variational inequalities gives:
\begin{equation}
	\label{varineq3}
	\begin{aligned}
		0\le&\int_{\Omega}A^{ijk\ell}(\varepsilon) e_{k\|\ell}(\varepsilon)e_{i\|j}(\varepsilon) \sqrt{g(\varepsilon)} \dd x\\
		&\quad-2\int_{\Omega}A^{ijk\ell}(\varepsilon) e_{k\|\ell}(\varepsilon) e_{i\|j} \sqrt{g(\varepsilon)} \dd x\\
		&\quad+\int_{\Omega} A^{ijk\ell}(\varepsilon) e_{k\|\ell} e_{i\|j} \sqrt{g(\varepsilon)} \dd x\\
		&\le \int_{\Omega}A^{ijk\ell}(\varepsilon) e_{k\|\ell}(\varepsilon)e_{i\|j}(\varepsilon;\bm{v}(\varepsilon;\bm{\eta})) \sqrt{g(\varepsilon)} \dd x\\
	&\quad-2\int_{\Omega}A^{ijk\ell}(\varepsilon) e_{k\|\ell}(\varepsilon) e_{i\|j} \sqrt{g(\varepsilon)} \dd x\\
	&\quad+\int_{\Omega} A^{ijk\ell}(\varepsilon) e_{k\|\ell} e_{i\|j} \sqrt{g(\varepsilon)} \dd x\\
	&\quad-\int_{\Omega} F^{ij}(\varepsilon) \left(e_{i\|j}(\varepsilon;\bm{v}(\varepsilon;\bm{\eta}))-e_{i\|j}(\varepsilon)\right) \sqrt{g(\varepsilon)} \dd x\\
	&=\int_{\Omega}A^{\alpha\beta\sigma\tau}(\varepsilon) e_{\sigma\|\tau}(\varepsilon)e_{\alpha\|\beta}(\varepsilon;\bm{v}(\varepsilon;\bm{\eta})) \sqrt{g(\varepsilon)} \dd x\\
	&\quad+\int_{\Omega}A^{\sigma\tau33}(\varepsilon) e_{\sigma\|\tau}(\varepsilon)e_{3\|3}(\varepsilon;\bm{v}(\varepsilon;\bm{\eta})) \sqrt{g(\varepsilon)}\dd x\\
	&\quad+4\int_{\Omega}A^{\alpha3\sigma3}(\varepsilon) e_{\sigma\|3}(\varepsilon)e_{\alpha\|3}(\varepsilon;\bm{v}(\varepsilon;\bm{\eta})) \sqrt{g(\varepsilon)} \dd x\\
	&\quad+\int_{\Omega}A^{33\sigma\tau}(\varepsilon) e_{3\|3}(\varepsilon)e_{\sigma\|\tau}(\varepsilon;\bm{v}(\varepsilon;\bm{\eta})) \sqrt{g(\varepsilon)} \dd x\\
	&\quad+\int_{\Omega}A^{3333}(\varepsilon) e_{3\|3}(\varepsilon)e_{3\|3}(\varepsilon;\bm{v}(\varepsilon;\bm{\eta})) \sqrt{g(\varepsilon)} \dd x\\
	&\quad-2\int_{\Omega}A^{ijk\ell}(\varepsilon) e_{k\|\ell}(\varepsilon) e_{i\|j} \sqrt{g(\varepsilon)} \dd x\\
	&\quad+\int_{\Omega} A^{ijk\ell}(\varepsilon) e_{k\|\ell} e_{i\|j} \sqrt{g(\varepsilon)} \dd x\\
	&\quad-\int_{\Omega} F^{\alpha\beta}(\varepsilon) \left(e_{\alpha\|\beta}(\varepsilon;\bm{v}(\varepsilon;\bm{\eta}))-e_{\alpha\|\beta}(\varepsilon)\right) \sqrt{g(\varepsilon)} \dd x\\
	&\quad-2\int_{\Omega} F^{\alpha3}(\varepsilon) \left(e_{\alpha\|3}(\varepsilon;\bm{v}(\varepsilon;\bm{\eta}))-e_{\alpha\|3}(\varepsilon)\right) \sqrt{g(\varepsilon)} \dd x\\
	&\quad-\int_{\Omega} F^{33}(\varepsilon) \left(e_{3\|3}(\varepsilon;\bm{v}(\varepsilon;\bm{\eta}))-e_{3\|3}(\varepsilon)\right) \sqrt{g(\varepsilon)} \dd x.
\end{aligned}
\end{equation}

Together, the assumed admissibility of the applied body forces (cf.~\eqref{adm-1} and~\eqref{adm-2}), the weak convergences established in part~(ii), the relations satisfied by the weak limits $e_{i\|j}$, and the asymptotic behaviour of the vector fields $\bm{v}(\varepsilon;\bm{\eta})$ as $\varepsilon\to 0$ (viz. \eqref{conv2}) give:
\begin{equation*}
	\begin{aligned}
		\int_{\Omega} F^{\alpha\beta}(\varepsilon) \left(e_{\alpha\|\beta}(\varepsilon;\bm{v}(\varepsilon;\bm{\eta}))-e_{\alpha\|\beta}(\varepsilon)\right) \sqrt{g(\varepsilon)} \dd x &\to \int_{\Omega} F^{\alpha\beta} (\gamma_{\alpha\beta}(\bm{\eta}) - e_{\alpha\|\beta}) \sqrt{a} \dd x,\\
		-2\int_{\Omega} F^{\alpha3}(\varepsilon) \left(e_{\alpha\|3}(\varepsilon;\bm{v}(\varepsilon;\bm{\eta}))-e_{\alpha\|3}(\varepsilon)\right) \sqrt{g(\varepsilon)} \dd x &\to \int_{\Omega} F^{\alpha3} \left(-(b_\alpha^\sigma \eta_\sigma+\partial_\alpha\eta_3)+\dfrac{a_{\alpha\beta}}{\mu}F^{\beta3}\right) \sqrt{a} \dd x,\\
		-\int_{\Omega}F^{33}(\varepsilon)\left(e_{3\|3}(\varepsilon;\bm{v}(\varepsilon;\bm{\eta}))-e_{3\|3}(\varepsilon)\right) \sqrt{g(\varepsilon)} \dd x&\to\int_{\Omega}F^{33}\left(-\dfrac{\lambda}{\lambda+2\mu}a^{\alpha\beta} e_{\alpha\|\beta}+\dfrac{F^{33}}{\lambda+2\mu}\right)\sqrt{a} \dd x,\\
		-\int_{\Omega} A^{ijk\ell}(\varepsilon) e_{k\|\ell} e_{i\|j} \sqrt{g(\varepsilon)} \dd x &\to -\int_{\omega} a^{\alpha\beta\sigma\tau} \overline{e_{\sigma\|\tau}}\, \overline{e_{\alpha\|\beta}} \sqrt{a} \dd y\\
		&\qquad-\int_{\Omega} \left\{\dfrac{a_{\alpha\beta}}{\mu}F^{\alpha3} F^{\beta3}+\dfrac{|F^{33}|^2}{\lambda+2\mu}\right\} \sqrt{a} \dd y, \quad\textup{ thanks to part (vi),}\\
		\int_{\Omega}A^{\alpha\beta\sigma\tau}(\varepsilon) e_{\sigma\|\tau}(\varepsilon)e_{\alpha\|\beta}(\varepsilon;\bm{v}(\varepsilon;\bm{\eta})) \sqrt{g(\varepsilon)} \dd x &\to 
		\int_{\Omega}\{\lambda a^{\alpha\beta}a^{\sigma\tau}+\mu(a^{\alpha\sigma}a^{\beta\tau}+a^{\alpha\tau}a^{\beta\sigma})\} e_{\sigma\|\tau} \gamma_{\alpha\beta}(\bm{\eta}) \sqrt{a} \dd x,\\
		\int_{\Omega}A^{\sigma\tau33}(\varepsilon) e_{\sigma\|\tau}(\varepsilon)e_{3\|3}(\varepsilon;\bm{v}(\varepsilon;\bm{\eta})) \sqrt{g(\varepsilon)}\dd x &=0,\\
		4\int_{\Omega}A^{\alpha3\sigma3}(\varepsilon) e_{\sigma\|3}(\varepsilon)e_{\alpha\|3}(\varepsilon;\bm{v}(\varepsilon;\bm{\eta})) \sqrt{g(\varepsilon)} \dd x &\to \int_{\Omega} F^{\alpha3}(b_\alpha^\sigma \eta_\sigma+\partial_\alpha\eta_3) \sqrt{a} \dd x,\\
		\int_{\Omega}A^{33\sigma\tau}(\varepsilon) e_{3\|3}(\varepsilon)e_{\sigma\|\tau}(\varepsilon;\bm{v}(\varepsilon;\bm{\eta})) \sqrt{g(\varepsilon)} \dd x &\to \int_{\Omega} \lambda a^{\sigma\tau}\left(-\dfrac{\lambda}{\lambda+2\mu}a^{\alpha\beta} e_{\alpha\|\beta}+\dfrac{F^{33}}{\lambda+2\mu}\right) \gamma_{\sigma\tau}(\bm{\eta}) \sqrt{a} \dd x,\\
		\int_{\Omega}A^{3333}(\varepsilon) e_{3\|3}(\varepsilon)e_{3\|3}(\varepsilon;\bm{v}(\varepsilon;\bm{\eta})) \sqrt{g(\varepsilon)} \dd x &=0.
	\end{aligned}
\end{equation*}

Therefore, combining~\eqref{varineq3} with the above relations and passing to the averages gives:
\begin{equation*}
\int_{\omega} a^{\alpha\beta\sigma\tau} \overline{e_{\sigma\|\tau}} \left(\gamma_{\alpha\beta}(\bm{\eta})-\overline{e_{\alpha\|\beta}}\right) \sqrt{a} \dd y \ge \int_{\omega} \left(\int_{-1}^{1} F^{\alpha\beta}-\dfrac{\lambda}{\lambda+2\mu} a^{\alpha\beta}F^{33} \dd x_3\right) \left(\gamma_{\alpha\beta}(\bm{\eta})-\overline{e_{\alpha\|\beta}}\right) \sqrt{a} \dd y,
\end{equation*}
which is the sought inequality.

(viii) \emph{The following convergences, so far only known to be weak, are actually strong:}
\begin{equation*}
	\begin{aligned}
		e_{i\|j}(\varepsilon) &\to e_{i\|j},\quad\textup{ in }L^2(\Omega),\\
		\varepsilon\bm{u}(\varepsilon) &\to \bm{0},\quad\textup{ in }\bm{H}^1(\Omega),\\
		\gamma_{\alpha\beta}\left(\overline{\bm{u}(\varepsilon)}\right) &\to \overline{e_{\alpha\|\beta}},\quad\textup{ in }L^2(\omega),\\
		\overline{\bm{u}(\varepsilon)} &\to \bm{\zeta},\quad\textup{ in }\bm{U}_M^\sharp(\omega).
	\end{aligned}
\end{equation*}

Consider the restriction of the linearised strain tensor to the space $\bm{V}(\omega)$, still denoted and defined by:
\begin{equation*}
	\gamma_{\alpha\beta}(\bm{\eta})=\dfrac{1}{2}(\partial_\beta \eta_\alpha + \partial_\alpha \eta_\beta ) - \Gamma^\sigma_{\alpha\beta} \eta_\sigma - b_{\alpha\beta} \eta_3, \quad\textup{ for all }\bm{\eta}=(\eta_i)\in\bm{V}(\omega).
\end{equation*}

Combining the linearity and continuity of such an operator with the assumption that the semi-norm $|\cdot|_\omega^M$ is actually a norm over $\bm{V}(\omega)$ (since the linearly elastic generalised membrane shell under consideration is of the first kind), allows us to apply Theorem~3.1-1 in~\cite{Ciarlet2025}, so as to infer that there exists a unique linear and continuous extension that we denote by:
\begin{equation*}
	\gamma_{\alpha\beta}^{\sharp}:\bm{V}_M^\sharp(\omega) \to L^2(\omega).
\end{equation*}

Recall that the following convergences were established in part~(ii) and~(v), respectively:
\begin{equation*}
	\begin{aligned}
		\overline{\bm{u}(\varepsilon)} \rightharpoonup \bm{\zeta}&,\quad\textup{ in } \bm{U}_M^\sharp(\omega) \textup{ as }\varepsilon\to 0,\\
		\gamma_{\alpha\beta}\left(\overline{\bm{u}(\varepsilon)}\right) \rightharpoonup \overline{e_{\alpha\|\beta}}&,\quad\textup{ in }L^2(\omega) \textup{ as }\varepsilon\to 0.
	\end{aligned}
\end{equation*}

The linearity and continuity of $\gamma_{\alpha\beta}^{\sharp}$, together with the uniqueness of the weak limit (cf., e.g., \cite{Brez11}), allow us to infer that:
\begin{equation*}
	\gamma_{\alpha\beta}^{\sharp}(\bm{\zeta}) = \overline{e_{\alpha\|\beta}},\quad \textup{ in } L^2(\omega).
\end{equation*}

In light of the definition of $\bm{U}_M^\sharp(\omega)$, given a vector field $\bm{\zeta} \in \bm{U}_M^\sharp(\omega)$ satisfying the variational inequalities in part~(vii), there exists a sequence $\{\bm{\zeta}_\ell\}_{\ell\ge 1} \subset \bm{U}(\omega)$ such that:
\begin{equation*}
	\bm{\zeta}_\ell \to \bm{\zeta},\quad\textup{ in }\bm{U}_M^\sharp(\omega) \textup{ as }\ell\to\infty,
\end{equation*}
so that:
\begin{equation}
	\label{eq2}
	\gamma_{\alpha\beta}^{\sharp}(\bm{\zeta}_\ell) \to \overline{e_{\alpha\|\beta}},\quad\textup{ in }L^2(\omega) \textup{ as }\ell\to\infty.
\end{equation}

The same computations as in part~(vii) and the above properties of the extension $\gamma_{\alpha\beta}^{\sharp}$ lead to the following estimates
\begin{equation}
	\label{varineq4}
	\begin{aligned}
		0&\le \dfrac{\sqrt{g_0}}{C_e}\limsup_{\varepsilon\to 0}\left(\sum_{i,j}\|e_{i\|j}(\varepsilon)-e_{i\|j}\|_{L^2(\Omega)}^2\right) \le 
		\int_{\omega} a^{\alpha\beta\sigma\tau} \overline{e_{\sigma\|\tau}} \left(\gamma_{\alpha\beta}^{\sharp}(\bm{\zeta}_\ell)-\overline{e_{\alpha\|\beta}}\right) \sqrt{a} \dd y\\
		&\quad - \int_{\omega} \left(\int_{-1}^{1} F^{\alpha\beta}-\dfrac{\lambda}{\lambda+2\mu} a^{\alpha\beta}F^{33} \dd x_3\right) \left(\gamma_{\alpha\beta}^{\sharp}(\bm{\zeta}_\ell)-\overline{e_{\alpha\|\beta}}\right) \sqrt{a} \dd y,
	\end{aligned}
\end{equation}
which hold for each $\ell\ge 1$. Thanks to~\eqref{eq2}, the right-hand side in~\eqref{varineq4} tends to zero as $\ell\to\infty$.
We have thus established that:
\begin{equation*}
	\limsup_{\varepsilon\to 0} \|e_{i\|j}(\varepsilon)-e_{i\|j}\|_{L^2(\Omega)} =0, \quad\textup{ for all } i,j.
\end{equation*}

The remaining strong convergences can be established in the same fashion as in part~(vii) in Theorem~5.6-1 of~\cite{Ciarlet2000}. In particular, the elements $\overline{\bm{u}(\varepsilon)} \in \bm{U}(\omega)$ strongly converge to $\bm{\zeta} \in \bm{U}_M^\sharp(\omega)$ as $\varepsilon\to0$.

(ix) \emph{The limit $\bm{\zeta} \in \bm{U}_M^\sharp(\omega)$ found in part~(viii) is the unique solution of Problem~\ref{problemLim}. Consequently, the entire sequence $\{\overline{\bm{u}(\varepsilon)}\}_{\varepsilon>0}$ converges to $\bm{\zeta}$ in $\bm{V}_M^\sharp(\omega)$ as $\varepsilon\to0$.}

Fix $\bm{\eta} \in\bm{U}(\omega)$, and recall that $\overline{\bm{u}(\varepsilon)} \in\bm{U}(\omega)$. The strong convergences obtained in part~(viii) imply that:
\begin{equation}
	\label{conv3}
	\begin{aligned}
		\int_{\omega}a^{\alpha\beta\sigma\tau} \gamma_{\sigma\tau}\left(\overline{\bm{u}(\varepsilon)}\right) \gamma_{\alpha\beta}(\bm{\eta}) \sqrt{a} \dd y &\to \int_{\omega}a^{\alpha\beta\sigma\tau} \overline{e_{\sigma\|\tau}} \gamma_{\alpha\beta}(\bm{\eta}) \sqrt{a} \dd y,\\
		\int_{\omega}a^{\alpha\beta\sigma\tau} \gamma_{\sigma\tau}\left(\overline{\bm{u}(\varepsilon)}\right) \gamma_{\alpha\beta}\left(\overline{\bm{u}(\varepsilon)}\right) \sqrt{a} \dd y &\to \int_{\omega}a^{\alpha\beta\sigma\tau} \overline{e_{\sigma\|\tau}}\,\overline{e_{\alpha\|\beta}} \sqrt{a} \dd y.
	\end{aligned}
\end{equation}

The convergences in~\eqref{conv3} and the conclusion of part~(vii) imply that, on the one hand:
\begin{equation}
	\label{varineq5}
	\begin{aligned}
		\lim_{\varepsilon\to 0} &B_M\left(\overline{\bm{u}(\varepsilon)},\bm{\eta}-\overline{\bm{u}(\varepsilon)}\right) = \int_{\omega}a^{\alpha\beta\sigma\tau} \overline{e_{\sigma\|\tau}} \left(\gamma_{\alpha\beta}(\bm{\eta}) - \overline{e_{\alpha\|\beta}}\right) \sqrt{a} \dd y\\
		&\ge \int_{\omega} \varphi^{\alpha\beta} \left(\gamma_{\alpha\beta}(\bm{\eta}) - \overline{e_{\alpha\|\beta}}\right) \sqrt{a} \dd y=\lim_{\varepsilon\to 0}L_M\left(\bm{\eta}-\overline{\bm{u}(\varepsilon)}\right)=L_M^\sharp(\bm{\eta}-\bm{\zeta}).
	\end{aligned}
\end{equation}

On the other hand:
\begin{equation}
	\label{varineq6}
	\lim_{\varepsilon\to 0} B_M\left(\overline{\bm{u}(\varepsilon)},\bm{\eta}-\overline{\bm{u}(\varepsilon)}\right) = B_M^\sharp(\bm{\zeta},\bm{\eta}-\bm{\zeta}).
\end{equation}

Therefore, combining~\eqref{varineq5} and~\eqref{varineq6} and the definition of continuous extensions $B_M^\sharp$ and $L_M^\sharp$ gives:
\begin{equation*}
	B_M^\sharp(\bm{\zeta},\bm{\eta}-\bm{\zeta}) \ge L_M^\sharp(\bm{\eta}-\bm{\zeta}),\quad\textup{ for all }\bm{\eta}\in\bm{U}_M^\sharp(\omega).
\end{equation*}

Noting that the set $\bm{U}_M^\sharp(\omega)$ is non-empty, closed and convex (viz. part~(ii)) and that a straightforward density argument shows that the bilinear form $B_M^\sharp$ and $L_M^\sharp$ satisfy the assumptions of Theorem~4.8-2 in~\cite{Ciarlet2025}, we infer that there exists a unique solution for Problem~\ref{problemLim}, and that the entire sequence $\{\overline{\bm{u}(\varepsilon)}\}_{\varepsilon>0}$ converges to $\bm{\zeta}$ in $\bm{V}_M^\sharp(\omega)$ as $\varepsilon\to 0$.

(x) \emph{Conclusion of the proof.}
The proof of parts~(ix), (x) and~(xi) of Theorem~5.6-1 in~\cite{Ciarlet2000} can now be re-used verbatim and their conclusions thus follow, completing the proof.
\end{proof}

We complete this section with some remarks on the admissibility of the assumption made in Theorem~\ref{asymptotics} according to which the solution of Problem~\ref{problem0} must satisfy the Kirchhoff-Love assumptions in order to carry out the asymptotic analysis. 
		
The primary reason for resorting to the Kirchhoff-Love assumptions is an analytical necessity, as the limit passage for the displacement field takes place in the abstract completion $\bm{V}_M^\sharp(\Omega)$. This functional setting prevents us from applying the Rellich-Kondrachov theorem, which was the key for establishing, in the papers~\cite{CiaPie2018b,CiaMarPie2018,PieJDE2022}, that the limit vector field respects the confinement condition.
The Kirchhoff-Love assumptions allow us to bypass this analytical issue and we are able to establish in Lemma~\ref{average-constraint} that $\overline{\bm{u}(\varepsilon)}\in\bm{U}(\omega)$.

Beyond this analytical necessity, employing the Kirchhoff-Love assumptions is a recognised technique in shell theory, particularly for thin structures. It is consistent with adopting the linearised Kirchhoff-Love assumptions (cf., e.g., equation~(1.3.19) in~\cite{BerBoi1982}) within the framework of linearised elasticity. This approach is not without precedent; for instance, in the study of non-linear elastic models for liquid crystal polymer networks, Nochetto and collaborators~\cite{BNY24} used a similar assumption to resolve analogous mathematical complexities for a problem different from the one we are considering here.

A way to avoid hypothesising the validity of the Kirchhoff-Love assumptions amounts to replacing the original confinement condition with the following physically sound condition:
\begin{equation}
	\label{cc-new}
	\left(\bm{\theta}(y)+x_3^\varepsilon\bm{a}^3(y)+v_i^\varepsilon(x^\varepsilon)\bm{a}^i(y)\right)\cdot\bm{q}\ge 0,\quad\textup{ for a.a. }x^\varepsilon=(y,x_3^\varepsilon)\in\Omega^\varepsilon.
\end{equation}

Note that the vectors $\bm{g}^{i,\varepsilon}(x^\varepsilon)$ appearing in the original constraint formulation~\eqref{cc-original} have been replaced by $\bm{a}^i(y)$ in~\eqref{cc-new}. The confinement condition~\eqref{cc-new}, although \emph{slightly} less accurate than the original one, is physically sound as it suggests, in line with the linearly elastic shell theory, that the shell under consideration is thin and its thickness remains constant under deformation. If a vector field $\bm{v}^\varepsilon=(v_i^\varepsilon)$ satisfies the confinement condition~\eqref{cc-new}, it is straightforward to see that the average across the thickness $\overline{\bm{v}}^\varepsilon$ satisfies the constraint characterising the set $\bm{U}(\omega)$ \emph{without} having to hypothesise the validity of the Kirchhoff-Love assumptions for one such displacement vector field. Indeed, taking the average across the thickness in the left-hand side of~\eqref{cc-new} gives:
\begin{equation*}
	\left(\bm{\theta}(y)+\overline{v}_i^\varepsilon(y)\bm{a}^i(y)\right)\cdot\bm{q}\ge 0,\quad\textup{ for a.a. }y\in\omega.
\end{equation*}

\section{Justification of Koiter's model for linearly elastic generalised membrane shells of the first kind}
\label{Sec:5}

It remains to show that the solution $\bm{u}^\varepsilon$ of Problem~\ref{problem0} \emph{asymptotically behaves like the solution of Koiter's model}. We recall that Koiter's model, which is due to Koiter~\cite{Koiter,Koiter1970}, is based on the Kirchhoff-Love assumptions, which are assumptions of \emph{geometrical nature}, and on the assumption that the estimates derived by F. John~\cite{John1965,John1971} are valid. The most remarkable feature of Koiter's model is that it is a two-dimensional model that is meant to approximate the \emph{original} three-dimensional linearly elastic energy regardless the type of shell under consideration.
For a more detailed description of Koiter's model we refer the reader to Chapter~7 of~\cite{Ciarlet2000}.

The natural functional space where Koiter's model is defined is given by:
\begin{equation*}
	\bm{V}_K(\omega):=\{\bm{\eta}=(\eta_i) \in H^1(\omega) \times H^1(\omega) \times H^2(\omega); \eta_i=\partial_{\nu}\eta_3=0 \textup{ on }\gamma_0\}.
\end{equation*}

For each $\varepsilon>0$, the energy associated with Koiter's model for a linearly elastic generalised membrane shell of the first kind takes the following form:
\begin{equation*}
	J_K^\varepsilon(\bm{\eta}):=\dfrac{\varepsilon}{2}\int_{\omega} a^{\alpha\beta\sigma\tau} \gamma_{\sigma\tau}(\bm{\eta}) \gamma_{\alpha\beta}(\bm{\eta}) \sqrt{a} \dd y +\dfrac{\varepsilon^3}{6}\int_{\omega}a^{\alpha\beta\sigma\tau} \rho_{\sigma\tau}(\bm{\eta})\rho_{\alpha\beta}(\bm{\eta}) \sqrt{a} \dd y -\varepsilon\int_{\omega} \varphi^{\alpha\beta} \gamma_{\alpha\beta}(\bm{\eta}) \sqrt{a} \dd y,
\end{equation*}
for all $\bm{\eta}=(\eta_i) \in \bm{V}_K(\omega)$.

If the shell is required to remain confined in a prescribed half-space, the minimisers for the energy functional $J_K^\varepsilon$ are to be sought in the following non-empty, closed and convex subset of the space $\bm{V}_K(\omega)$ introduced beforehand:
\begin{equation*}
	\bm{U}_K(\omega):=\{\bm{\eta}=(\eta_i)\in \bm{V}_K(\omega); (\bm{\theta}+\eta_i \bm{a}^i)\cdot\bm{q}\ge 0 \textup{ a.e. in }\omega\}.
\end{equation*}

Let us now recall the statement of Theorem~4.1 in~\cite{CiaPie2018b} concerning the identification of a two-dimensional limit model for linearly elastic generalised membrane shells of the first kind subjected to remaining confined in a half-space orthogonal to a unit-vector $\bm{q}$.

\begin{theorem}
	\label{Koitergeneralised}
	Let $\omega$ be a domain in $\mathbb{R}^2$ and let $\bm{\theta} \in \mathcal{C}^3(\overline{\omega};\mathbb{E}^3)$ be an injective immersion such that $\min_{y \in \overline{\omega}}(\bm{\theta} \cdot\bm{q})=d>0$. Consider a family of linearly elastic generalised membrane shells of the first kind with thickness $2\varepsilon$ approaching zero and with each having the same middle surface $\bm{\theta}(\overline{\omega})$, and assume that each shell is subject to a boundary condition of place along a portion of its lateral face, whose middle curve is the set $\bm{\theta}(\gamma_0)$.
	
	Define the spaces
	\begin{align*}
		\bm{V}(\omega)&:=\{\bm{\eta}=(\eta_i) \in \bm{H}^1(\omega);\bm{\eta}=\bm{0} \textup{ on }\gamma_0\},\\
		\bm{V}_M^\sharp(\omega)&:=\textup{completion of $\bm{V}(\omega)$ with respect to $\left|\cdot\right|_\omega^M$},
	\end{align*}
	and, for each $\varepsilon>0$, let $\bm{\zeta}_K^\varepsilon$ denote the unique minimiser of the energy functional $J_K^\varepsilon$ over the set $\bm{U}_K(\omega)$.
	Then the following convergence holds:
	$$
	\bm{\zeta}_K^\varepsilon \to \bm{\zeta},\quad\textup{ in } \bm{V}_M^\sharp(\omega) \textup{ as }\varepsilon \to 0,
	$$
	where $\bm{\zeta}$ denote the unique solution of the following variational problem:
	Find 
	$$
	\bm{\zeta} \in \bm{U}^\sharp(\omega):=\textup{closure of $\bm{U}_K(\omega)$ with respect to $\left|\cdot\right|_\omega^M$}
	$$
	that satisfies the variational inequalities:
	$$
	B_M^\sharp(\bm{\zeta}, \bm{\eta}-\bm{\zeta}) \ge L_M^\sharp(\bm{\eta}-\bm{\zeta}) \quad\textup{ for all }\bm{\eta}=(\eta_i) \in \bm{U}^\sharp(\omega),
	$$
	where $B_M^\sharp(\cdot,\cdot)$ and $L_M^\sharp$ designate the unique continuous linear extensions from $\bm{V}(\omega)$ to $\bm{V}_M^\sharp(\omega)$ of the bilinear form $B_M(\cdot,\cdot)$, and of the linear form $L_M$ defined by
	$$
	L_M(\bm{\eta}):=\int_\omega \varphi^{\alpha\beta}\gamma_{\alpha\beta}(\bm{\eta}) \sqrt{a} \dd y \quad\textup{ for all }\bm{\eta}=(\eta_i) \in \bm{V}(\omega).
	$$
	\qed
\end{theorem}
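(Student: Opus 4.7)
The plan is to follow the by-now standard scheme for justifying Koiter's model in the presence of an obstacle (as in \cite{CiaPie2018b,CiaMarPie2018}), adapting it to the abstract-completion framework specific to generalised membrane shells of the first kind. To streamline notation, let $B_M(\bm{\eta},\bm{\xi})$ denote the membrane bilinear form appearing in $J_K^\varepsilon$ and let $B_F(\bm{\eta},\bm{\xi}):=\int_\omega a^{\alpha\beta\sigma\tau}\rho_{\sigma\tau}(\bm{\eta})\rho_{\alpha\beta}(\bm{\xi})\sqrt{a}\dd y$ denote the flexural bilinear form. Dividing $J_K^\varepsilon$ by $\varepsilon$, the minimiser $\bm{\zeta}_K^\varepsilon\in\bm{U}_K(\omega)$ is characterised by the variational inequality
\begin{equation*}
B_M(\bm{\zeta}_K^\varepsilon,\bm{\eta}-\bm{\zeta}_K^\varepsilon)+\frac{\varepsilon^2}{3}B_F(\bm{\zeta}_K^\varepsilon,\bm{\eta}-\bm{\zeta}_K^\varepsilon)\ge L_M(\bm{\eta}-\bm{\zeta}_K^\varepsilon),\quad\textup{for all }\bm{\eta}\in\bm{U}_K(\omega),
\end{equation*}
whose unique solvability on the non-empty closed convex set $\bm{U}_K(\omega)$ follows from Lions--Stampacchia applied to the $\bm{V}_K(\omega)$-coercive bilinear form on the left-hand side (the coercivity is precisely the statement that $|\cdot|_\omega^M$ is a norm on $\bm{V}_K(\omega)$ for a generalised membrane shell, strengthened by the flexural contribution).

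I would then derive the \emph{a priori} estimates by testing with $\bm{\eta}=\bm{0}$, which is admissible since the reference configuration satisfies the confinement condition (Lemma~\ref{lem:1}). Using the admissibility of the applied body forces and Cauchy--Schwarz, this produces a bound of the form
\begin{equation*}
(|\bm{\zeta}_K^\varepsilon|_\omega^M)^2+\frac{\varepsilon^2}{3}B_F(\bm{\zeta}_K^\varepsilon,\bm{\zeta}_K^\varepsilon)\le \kappa_0\,|\bm{\zeta}_K^\varepsilon|_\omega^M,
\end{equation*}
yielding uniform control of $|\bm{\zeta}_K^\varepsilon|_\omega^M$ and of $\varepsilon\{B_F(\bm{\zeta}_K^\varepsilon,\bm{\zeta}_K^\varepsilon)\}^{1/2}$. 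Since $\bm{V}_M^\sharp(\omega)$ is a Hilbert space, a subsequence converges weakly in $\bm{V}_M^\sharp(\omega)$ to some $\bm{\zeta}$. The set $\bm{U}^\sharp(\omega)$ is, by definition, the closure of the convex set $\bm{U}_K(\omega)$ in $\bm{V}_M^\sharp(\omega)$, so it is convex and (strongly, hence weakly) closed; since $\bm{\zeta}_K^\varepsilon\in\bm{U}_K(\omega)\subset\bm{U}^\sharp(\omega)$, the weak limit $\bm{\zeta}$ belongs to $\bm{U}^\sharp(\omega)$.

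Next, for any fixed $\bm{\eta}\in\bm{U}_K(\omega)$, I would rewrite the variational inequality as
\begin{equation*}
B_M(\bm{\zeta}_K^\varepsilon,\bm{\eta})-B_M(\bm{\zeta}_K^\varepsilon,\bm{\zeta}_K^\varepsilon)+\frac{\varepsilon^2}{3}B_F(\bm{\zeta}_K^\varepsilon,\bm{\eta})-\frac{\varepsilon^2}{3}B_F(\bm{\zeta}_K^\varepsilon,\bm{\zeta}_K^\varepsilon)\ge L_M(\bm{\eta}-\bm{\zeta}_K^\varepsilon),
\end{equation*}
dropping the nonpositive term $-\frac{\varepsilon^2}{3}B_F(\bm{\zeta}_K^\varepsilon,\bm{\zeta}_K^\varepsilon)$ on the left-hand side is the wrong direction, so instead I would absorb it on the right by writing
\begin{equation*}
B_M(\bm{\zeta}_K^\varepsilon,\bm{\eta}-\bm{\zeta}_K^\varepsilon)\ge L_M(\bm{\eta}-\bm{\zeta}_K^\varepsilon)-\frac{\varepsilon^2}{3}B_F(\bm{\zeta}_K^\varepsilon,\bm{\eta})+\frac{\varepsilon^2}{3}B_F(\bm{\zeta}_K^\varepsilon,\bm{\zeta}_K^\varepsilon)\ge L_M(\bm{\eta}-\bm{\zeta}_K^\varepsilon)-\frac{\varepsilon^2}{3}B_F(\bm{\zeta}_K^\varepsilon,\bm{\eta}).
\end{equation*}
The residual flexural term satisfies $\bigl|\frac{\varepsilon^2}{3}B_F(\bm{\zeta}_K^\varepsilon,\bm{\eta})\bigr|\le \frac{\varepsilon}{3}\{\varepsilon^2 B_F(\bm{\zeta}_K^\varepsilon,\bm{\zeta}_K^\varepsilon)\}^{1/2}\{B_F(\bm{\eta},\bm{\eta})\}^{1/2}=\mathcal{O}(\varepsilon)$, so it vanishes as $\varepsilon\to 0$. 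Combining weak convergence of $\bm{\zeta}_K^\varepsilon$ with the weak lower semicontinuity of the quadratic form $\bm{\eta}\mapsto B_M^\sharp(\bm{\eta},\bm{\eta})$ on $\bm{V}_M^\sharp(\omega)$, I obtain, after passing to the $\limsup$ on the left and the $\liminf$ on the right,
\begin{equation*}
B_M^\sharp(\bm{\zeta},\bm{\eta})-B_M^\sharp(\bm{\zeta},\bm{\zeta})\ge L_M^\sharp(\bm{\eta}-\bm{\zeta}),\quad\textup{for all }\bm{\eta}\in\bm{U}_K(\omega).
\end{equation*}
A standard density argument, based on the continuity of $B_M^\sharp$ and $L_M^\sharp$ with respect to $|\cdot|_\omega^M$ and on the very definition of $\bm{U}^\sharp(\omega)$ as the closure of $\bm{U}_K(\omega)$, then extends this inequality to every $\bm{\eta}\in\bm{U}^\sharp(\omega)$. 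Since the extended bilinear form $B_M^\sharp$ is $\bm{V}_M^\sharp(\omega)$-elliptic (as $|\cdot|_\omega^M$ is the norm of $\bm{V}_M^\sharp(\omega)$), the resulting variational inequality admits a unique solution by Lions--Stampacchia (Theorem~6.1-2 of~\cite{PGCLNFAA}), forcing the whole family $\{\bm{\zeta}_K^\varepsilon\}$ to converge to $\bm{\zeta}$. Strong convergence in $\bm{V}_M^\sharp(\omega)$ follows by specialising $\bm{\eta}$ to a sequence $\bm{\eta}_\ell\in\bm{U}_K(\omega)$ approximating $\bm{\zeta}$ with respect to $|\cdot|_\omega^M$ (which exists by definition of $\bm{U}^\sharp(\omega)$) and exploiting the coercivity estimate $(|\bm{\zeta}_K^\varepsilon-\bm{\eta}_\ell|_\omega^M)^2\lesssim B_M(\bm{\zeta}_K^\varepsilon-\bm{\eta}_\ell,\bm{\zeta}_K^\varepsilon-\bm{\eta}_\ell)$ combined with the variational inequality.

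The main obstacle I expect is the interplay between the two scales in the energy: controlling the bending contribution $\frac{\varepsilon^2}{3}B_F(\bm{\zeta}_K^\varepsilon,\bm{\zeta}_K^\varepsilon)$ in the variational inequality without losing the inequality's sign requires the absorption trick above, and the fact that the approximating sequence $\bm{\eta}_\ell\in\bm{U}_K(\omega)$ only converges in the weak semi-norm $|\cdot|_\omega^M$ (not in the $H^2$ norm of the third component) means that one cannot control $B_F(\bm{\eta}_\ell,\bm{\eta}_\ell)$ uniformly in $\ell$. This is handled by choosing $\ell=\ell(\varepsilon)$ diverging sufficiently slowly so that $\varepsilon^2 B_F(\bm{\eta}_{\ell(\varepsilon)},\bm{\eta}_{\ell(\varepsilon)})\to 0$, a diagonal extraction that is routine once the a priori estimates are in place.
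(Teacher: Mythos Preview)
The paper does not supply its own proof of this theorem: it is explicitly introduced as a recall of Theorem~4.1 in~\cite{CiaPie2018b} and is closed with a \qed\ immediately after the statement. There is therefore no in-paper argument to compare against; your sketch is essentially the proof strategy of~\cite{CiaPie2018b} that the paper is citing, and it is correct in outline.

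One minor simplification: the diagonal extraction $\ell=\ell(\varepsilon)$ you flag in the final paragraph is unnecessary. For the strong-convergence step you can keep $\ell$ fixed, let $\varepsilon\to 0$ first (the flexural remainder $\tfrac{\varepsilon^2}{3}B_F(\bm{\zeta}_K^\varepsilon,\bm{\eta}_\ell)$ is $\mathcal{O}(\varepsilon)$ for fixed $\ell$ by your own Cauchy--Schwarz bound), obtain $\limsup_\varepsilon B_M^\sharp(\bm{\zeta}_K^\varepsilon,\bm{\zeta}_K^\varepsilon)\le B_M^\sharp(\bm{\zeta},\bm{\eta}_\ell)-L_M^\sharp(\bm{\eta}_\ell-\bm{\zeta})$, and only then send $\ell\to\infty$. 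Combined with weak lower semicontinuity this gives norm convergence, hence strong convergence in $\bm{V}_M^\sharp(\omega)$, without any need to couple the two parameters. This is exactly the two-step limit used in part~(viii) of the proof of Theorem~\ref{asymptotics} in the paper.
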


In order to fully justify Koiter's model for linearly elastic generalised membrane shells of the first kind subjected to remaining confined in a prescribed half-space, we are thus left to show that:
\begin{equation}
	\label{density}
	\bm{U}_M^\sharp(\omega) := \overline{\bm{U}(\omega)}^{|\cdot|_\omega^M}=\bm{U}^\sharp(\omega):=\overline{\bm{U}_K(\omega)}^{|\cdot|_\omega^M}.
\end{equation}

To this end, we will resort to the following density result, established by P. Doktor~\cite{Dok73} in a more general form (see also~\cite{DZ06}).

\begin{theorem}
	\label{DoktorDensity}
	Let $\omega \subset \mathbb{R}^N$, with $N=2$ or $N=3$ be a Lipschitz domain.
	Let $V:=\{\eta \in H^1(\omega);\eta=0 \textup{ on }\gamma_0\}$.
	Then the space $\{\eta \in \mathcal{C}^\infty(\overline{\omega});\eta=0 \textup{ on }\gamma_0\}$ is dense in $V$ with respect to the norm $\|\cdot\|_{H^1(\omega)}$.
	\qed
\end{theorem}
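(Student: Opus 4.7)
The goal is to show that any $\eta\in V$ can be approximated in the $H^1(\omega)$-norm by elements of $\mathcal{C}^\infty(\overline{\omega})\cap V$. My plan splits this into two independent steps: (i) approximate $\eta\in V$ by elements of $V$ that vanish in an $\omega$-neighbourhood of $\gamma_0$; (ii) smooth such ``away-from-$\gamma_0$'' functions by a standard mollification combined with a Stein-type extension across the remaining boundary piece $\gamma\setminus\overline{\gamma_0}$, choosing the mollification parameter small enough that the boundary condition on $\gamma_0$ is preserved. Step~(ii) is routine, so I focus on~(i).

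For step~(i), set $d(y):=\mathrm{dist}(y,\gamma_0)$, and pick a sequence of cutoffs $\chi_n\in\mathcal{C}^\infty([0,\infty))$ with $\chi_n\equiv 0$ on $[0,1/n]$, $\chi_n\equiv 1$ on $[2/n,\infty)$, and $|\chi_n'|\le Cn$. Define $\zeta_n:=\eta\,(\chi_n\circ d)$. Then $\zeta_n\in V$, and $\zeta_n$ vanishes in the open set $\omega\cap\{d<1/n\}$. Convergence $\zeta_n\to\eta$ in $L^2(\omega)$ is immediate from dominated convergence. For the gradient, the only nontrivial term is
\begin{equation*}
\|\eta\,\chi_n'(d)\,\nabla d\|_{L^2(\omega)}^2 \le Cn^2\int_{\{1/n\le d\le 2/n\}}\eta^2 \dd y \le 4C\int_{\{1/n\le d\le 2/n\}}\frac{\eta^2}{d^2} \dd y,
\end{equation*}
which tends to zero as $n\to\infty$ by dominated convergence, provided the Hardy-type inequality
\begin{equation*}
\int_\omega \frac{\eta^2}{d^2} \dd y \le C\,\|\nabla\eta\|_{L^2(\omega)}^2 \quad\textup{for all }\eta\in V
\end{equation*}
is valid. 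Once $\zeta_n\to\eta$ in $H^1(\omega)$, fix $n$: since $\zeta_n$ vanishes in an $\omega$-neighbourhood of $\gamma_0$, a Stein extension of $\zeta_n$ across $\gamma\setminus\overline{\gamma_0}$ followed by mollification with parameter $\ll 1/n$ yields a sequence in $\mathcal{C}^\infty(\overline{\omega})\cap V$ converging to $\zeta_n$ in $H^1(\omega)$. A standard diagonal extraction then completes the argument.

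The main obstacle is the validity of the Hardy inequality for the mixed-boundary space $V$. For $\gamma_0=\gamma$ this is the classical Hardy inequality on Lipschitz domains due to Ne\v{c}as; for $\gamma_0$ a relatively open Lipschitz subset of the Lipschitz curve $\gamma$ with Lipschitz relative boundary $\partial_\gamma\gamma_0$ (a finite set of points in the planar setting at hand), it can be obtained by straightening the boundary in local charts and invoking the one-dimensional Hardy inequality fibrewise over $\gamma_0$, then patching via a partition of unity. Should one prefer to avoid any Hardy-type inequality, Doktor's original proof proceeds instead by a partition-of-unity argument on $\overline{\omega}$, handling separately the interior charts, the pure $\gamma_0$-charts (extension by zero plus mollification), the pure $(\gamma\setminus\overline{\gamma_0})$-charts (Stein extension plus mollification), and finally the transition charts around points of $\partial_\gamma\gamma_0$, where a translation-and-mollification argument is used with the translation direction chosen to push $\gamma_0$ out of the model half-plane while drawing $\gamma\setminus\gamma_0$ into it.
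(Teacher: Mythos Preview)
The paper does not prove this statement: it is quoted as a known result of Doktor~\cite{Dok73} (see also~\cite{DZ06}) and closed immediately with a \qed. There is therefore no proof in the paper to compare against.

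Your outline is a correct and standard route to the result. The only substantive point is the one you already flag: the Hardy-type inequality $\int_\omega \eta^2/d^2\,\dd y\le C\|\nabla\eta\|_{L^2(\omega)}^2$ with $d=\mathrm{dist}(\cdot,\gamma_0)$ is not automatic in the mixed-boundary setting and needs the kind of chart-by-chart argument you sketch (in the planar Lipschitz setting, $\partial_\gamma\gamma_0$ being a finite set of points makes this manageable). It is worth noting that the paper itself later runs an argument very close to your step~(i) in part~(ii) of the proof of Theorem~\ref{th:density}: the same cutoff $\chi_n\circ d$ appears (there denoted $f^{(k)}$), and the dangerous term $k\|\eta_i\|_{L^2(\omega_{1/k}\setminus\overline{\omega_{2/k}})}$ is controlled not by Hardy but by a thin-strip Poincar\'e inequality~\eqref{Poincare} obtained via absolute continuity on lines. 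That inequality is precisely what forces the paper's restrictive hypothesis that $\omega$ be a rectangle and $\gamma_0$ a union of full edges. Your Hardy route, if carried through, would in principle remove that geometric restriction from the later argument as well. Your closing summary of Doktor's original partition-of-unity proof with translations at the transition charts is accurate.
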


The next theorem, which constitutes the \emph{second main result of this paper}, is devoted to establishing that equality~\eqref{density} holds under the assumption that the domain $\omega$ is a \emph{rectangle}. Let $\hat{\omega}$ denote an extension of $\omega$ obtained by first reflecting (cf., e.g., Section~9.2 in~\cite{Brez11}) a strip of given width with respect to a pair of parallel edges and by then reflecting the strip of the same width associated with the obtained extension with respect to the remaining pair of parallel edges.
In this case, we can extend the injective immersion $\bm{\theta}$ to a new injective mapping $\hat{\bm{\theta}}:\overline{\hat{\omega}}\to\mathbb{E}^3$ as follows: First, we reflect $\bm{\theta}$ along the edges of the rectangle that belong to $\gamma\setminus\gamma_0$, and then we smoothly extend $\bm{\theta}$ via the Whitney extension theorem (cf., e.g., Theorem 2.3.6 of~\cite{Hormander1990} and Lemma~6.3 in~\cite{MeiPie2024} for a similar argument) along the edges that belong to $\gamma_0$. In light of the latter considerations, we have that $\hat{\bm{\theta}} \in \mathcal{C}^3(\overline{\hat{\omega}}\setminus\gamma;\mathbb{E}^3) \cap \mathcal{C}^0(\overline{\hat{\omega}};\mathbb{E}^3)$.

Likewise, we extend the contravariant basis $\{\bm{a}^i\}_{i=1}^3$ associated with the injective immersion $\bm{\theta}$ by reflection along the edges of the boundary $\gamma\setminus\gamma_0$ and by smooth extension along the edges of the rectangle that belong to $\gamma_0$. We denote the extension of the vector field $\bm{a}^i$ by $\hat{\bm{a}}^i$ and we further require that the unit-vector $\bm{q}$ we fixed satisfies
\begin{equation}
	\label{continuity}
	\hat{\bm{a}}^i(\hat{y}) \cdot \bm{q} = \bm{a}^i(y) \cdot\bm{q},
\end{equation}
for all $y \in \omega$ and $\hat{y}\in\hat{\omega}\setminus\omega$ that is antipodal to $y$ with respect to some point $y_0 \in \gamma\setminus\gamma_0$, and for each $1\le i \le 3$. A geometrical interpretation for the condition~\eqref{continuity} is illustrated in Figure~\ref{fig:0} below.
\begin{figure}[H]
	\centering
	\captionsetup[subfigure]{justification=centering}
	\includegraphics*[width=0.5\textwidth]{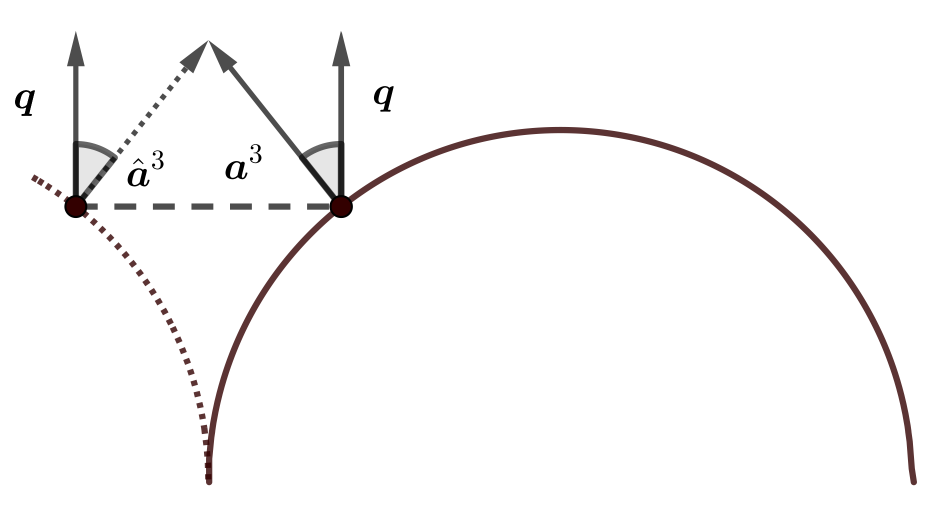}
	\caption{An example where the condition~\eqref{continuity} holds. The figure here represented is a cross section of a portion of a cylinder, whose extension by reflection along one of the straight edges of its boundary is denoted by the dotted pattern. The unit-vector $\bm{q}$ identifying the orthogonal complement to the half-space where the shell has to remain confined is given and it is equal to $(0,0,1)$. The vector $\bm{a}^3(y)$, denoted by a solid line, is reflected (in the sense of Section~9.2 of~\cite{Brez11}) onto the corresponding vector $\hat{\bm{a}}^3(\hat{y})$, denoted by a dotted line, where $\hat{y}\in\overline{\hat{\omega}}\setminus\omega$ is the unique antipodal point with respect to the straight edge of the boundary $\gamma$ that corresponds to an appropriate point $y \in\omega$. We observe that $\bm{a}^3(y)\cdot\bm{q}=\hat{\bm{a}}^3(\hat{y}) \cdot\bm{q}$ even though in general $\bm{a}^3(y) \neq \hat{\bm{a}}^3(\hat{y})$. The remaining vectors of the contravariant basis under consideration can be extended likewise. Other examples of parametrisation of surfaces $\bm{\theta}$ and unit-vectors $\bm{q}$ for which this extension machinery is possible are described in Figure~\ref{fig:3}.}
	\label{fig:0}
\end{figure}

Note that, in general, we have that $\bm{a}^i(y) \neq \hat{\bm{a}}^i(\hat{y})$, although the mappings
$$
\hat{y}\in\overline{\hat{\omega}} \mapsto \hat{\bm{a}}^i(\hat{y}) \cdot\bm{q},\quad 1\le i \le 3,
$$
are continuous.

We say that two points $y_1$, $y_2\in\mathbb{R}^N$ are \emph{antipodal with respect to} $y_0 \in \mathbb{R}^N$ if $y_0$ is the mid-point of the segment joining $y_1$ and $y_2$.

These observations and definitions lead to the formulation of the following \emph{density theorem}, which constitutes the second new result in this paper and that is critical for establishing the justification of Koiter's model for linearly elastic generalised membrane shells subjected to remaining confined in a half-space.

\begin{theorem}
	\label{th:density}
	Let $\bm{\theta} \in \mathcal{C}^3(\overline{\omega};\mathbb{E}^3)$ be an injective immersion such that $\min_{y \in \overline{\omega}}(\bm{\theta} \cdot \bm{q})=d>0$.
	
	Assume that $\omega$ satisfies one of the following, and note that $(a)$ is a special case of $(b)$ when the rectangle is fully clamped:
	\begin{itemize}
		\item[$(a)$] $\gamma_0=\gamma$;
		\item[$(b)$] $\omega$ is a rectangle and $\gamma_0$ consists of whole edges of $\gamma$.
	\end{itemize}
	
	Assume that the unit-vector $\bm{q} \in \mathbb{E}^3$ is such that~\eqref{continuity} holds, i.e.,
	\begin{equation*}
		\hat{\bm{a}}^i(\hat{y}) \cdot \bm{q} = \bm{a}^i(y) \cdot\bm{q},\quad\textup{ for all }1\le i \le 3,
	\end{equation*}
	for all $y \in \omega$ and $\hat{y}\in\hat{\omega}\setminus\omega$ that is antipodal to $y$ with respect to some point $y_0 \in \gamma\setminus\gamma_0$.
	Then
	\begin{equation*}
		\bm{U}_M^\sharp(\omega)=\bm{U}^\sharp(\omega).
	\end{equation*}
\end{theorem}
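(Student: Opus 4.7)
The inclusion $\bm{U}^\sharp(\omega)\subset\bm{U}_M^\sharp(\omega)$ is essentially tautological: any element of $\bm{U}_K(\omega)$ also belongs to $\bm{U}(\omega)$ (since $H^2(\omega)\subset H^1(\omega)$ and the condition $\partial_\nu\eta_3=0$ on $\gamma_0$ is only an extra restriction), so the closures with respect to $|\cdot|_\omega^M$ inherit the inclusion. For the reverse inclusion, my plan is to show that every $\bm{\eta}\in\bm{U}(\omega)$ can be approximated in the stronger $\bm{H}^1(\omega)$-norm by a sequence in $\bm{U}_K(\omega)$; this suffices because $|\cdot|_\omega^M$ is dominated by $\|\cdot\|_{\bm{H}^1(\omega)}$, the operator $\gamma_{\alpha\beta}$ being continuous from $\bm{H}^1(\omega)$ into $L^2(\omega)$. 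Theorem~\ref{DoktorDensity} will justify that smooth-up-to-the-boundary approximations are available in the underlying space, but the argument will have to be constructive in order to preserve the confinement.

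Given $\bm{\eta}\in\bm{U}(\omega)$, I would first apply the scaling trick $\bm{\eta}^\delta:=(1-\delta)\bm{\eta}$, $\delta>0$ small; the identity $(\bm{\theta}+\eta_i^\delta\bm{a}^i)\cdot\bm{q}=(1-\delta)(\bm{\theta}+\eta_i\bm{a}^i)\cdot\bm{q}+\delta\,\bm{\theta}\cdot\bm{q}$ together with Lemma~\ref{lem:1} delivers the \emph{strict} confinement $(\bm{\theta}+\eta_i^\delta\bm{a}^i)\cdot\bm{q}\ge\delta d$ a.e.\ on $\omega$, and $\bm{\eta}^\delta\to\bm{\eta}$ in $\bm{H}^1(\omega)$ as $\delta\to 0$. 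Next I would extend $\bm{\eta}^\delta$ to a field $\hat{\bm{\eta}}^\delta$ on $\hat\omega$ as follows: across each edge in $\gamma\setminus\gamma_0$ (present only in case~(b)) use the very reflection that produces $\hat{\bm{\theta}}$ and the extended contravariant basis $\hat{\bm{a}}^i$; across each edge in $\gamma_0$ extend by zero, which is licit since $\bm{\eta}^\delta$ has vanishing trace there. The hypothesis~\eqref{continuity} is precisely what is needed to propagate the confinement to $\hat\omega$: on every reflected copy of $\omega$, $(\hat{\bm{\theta}}+\hat\eta_i^\delta\hat{\bm{a}}^i)\cdot\bm{q}$ takes the same value as $(\bm{\theta}+\eta_i^\delta\bm{a}^i)\cdot\bm{q}$ at the antipodal point, while across $\gamma_0$ the extended $\bm{\eta}^\delta$ is zero and $\hat{\bm{\theta}}\cdot\bm{q}$ remains strictly positive on a narrow strip by continuity of the Whitney extension. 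In sum, $(\hat{\bm{\theta}}+\hat\eta_i^\delta\hat{\bm{a}}^i)\cdot\bm{q}\ge\delta d/2$ a.e.\ on $\hat\omega$.

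Because the double-reflection construction yields $\omega\Subset\hat\omega$, for any $r<\mathrm{dist}(\omega,\partial\hat\omega)$ the mollified field $\bm{\eta}^{\delta,r}:=\rho_r\ast\hat{\bm{\eta}}^\delta$ is of class $\mathcal{C}^\infty(\overline\omega)$, in particular $\eta_3^{\delta,r}\in H^2(\omega)$. The key check is that the confinement survives mollification. Writing
\begin{equation*}
(\bm{\theta}+\eta_i^{\delta,r}\bm{a}^i)\cdot\bm{q}(y) = \rho_r\ast\bigl[(\hat{\bm{\theta}}+\hat\eta_i^\delta\hat{\bm{a}}^i)\cdot\bm{q}\bigr](y) + R_\theta(y) + R_a(y),
\end{equation*}
the main term is $\ge\delta d/2$ by the previous paragraph; $R_\theta(y):=\bm{\theta}(y)\cdot\bm{q}-\rho_r\ast(\hat{\bm{\theta}}\cdot\bm{q})(y)$ is $O(r)$ by smoothness of $\bm{\theta}$; and
\begin{equation*}
R_a(y) = \int\rho_r(y-z)\,\hat\eta_i^\delta(z)\,\bigl[\bm{a}^i(y)\cdot\bm{q}-\hat{\bm{a}}^i(z)\cdot\bm{q}\bigr] \dd z.
\end{equation*}
The function $\hat{\bm{a}}^i\cdot\bm{q}$ is Lipschitz on $\hat\omega$ (smooth inside each reflected copy of $\omega$ and continuous across reflection lines by~\eqref{continuity}, hence Lipschitz globally), so $|R_a(y)|\le Cr\,(\rho_r\ast|\hat\eta_i^\delta|)(y)$. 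Here I would use the two-dimensional Sobolev embedding $H^1(\hat\omega)\hookrightarrow L^p(\hat\omega)$ (valid for every $p<\infty$) combined with Young's inequality: $\|\rho_r\ast|\hat\eta_i^\delta|\|_{L^\infty(\omega)}\le\|\rho_r\|_{L^{p'}}\|\hat\eta_i^\delta\|_{L^p}=O(r^{-2/p})$, so $|R_a|=O(r^{1-2/p})\to 0$ uniformly for any $p>2$. Choosing $r=r(\delta)$ small enough thus yields $(\bm{\theta}+\eta_i^{\delta,r}\bm{a}^i)\cdot\bm{q}\ge\delta d/4$ on $\omega$.

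Finally, to enforce the boundary conditions $\eta_i^*=0$ and $\partial_\nu\eta_3^*=0$ on $\gamma_0$, I would multiply $\bm{\eta}^{\delta,r}$ by a smooth cutoff $\chi_r$ vanishing on $\{y\in\overline\omega:\mathrm{dist}(y,\gamma_0)\le r\}$ and equal to $1$ on $\{y:\mathrm{dist}(y,\gamma_0)\ge 2r\}$; since $\chi_r$ vanishes together with all its derivatives on $\gamma_0$, so does $\chi_r\bm{\eta}^{\delta,r}$. The cutoff preserves the confinement thanks to the convex-combination identity
\begin{equation*}
(\bm{\theta}+\chi_r\eta_i^{\delta,r}\bm{a}^i)\cdot\bm{q} = (1-\chi_r)(\bm{\theta}\cdot\bm{q}) + \chi_r\bigl[(\bm{\theta}+\eta_i^{\delta,r}\bm{a}^i)\cdot\bm{q}\bigr],
\end{equation*}
both summands being non-negative. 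A Hardy-type inequality (applicable since $\bm{\eta}$ has zero trace on $\gamma_0$) bounds $\|\bm{\eta}\|_{\bm{L}^2(\{\mathrm{dist}(y,\gamma_0)<2r\})}$ by $o(r)$, which controls the $\bm{H}^1(\omega)$-error introduced by $\chi_r$. A diagonal extraction $\delta_n\to 0$ with $r_n=r_n(\delta_n)\to 0$ then produces a sequence $\chi_{r_n}\bm{\eta}^{\delta_n,r_n}\in\bm{U}_K(\omega)$ converging to $\bm{\eta}$ in $\bm{H}^1(\omega)$, hence in $|\cdot|_\omega^M$, proving $\bm{U}_M^\sharp(\omega)\subset\bm{U}^\sharp(\omega)$. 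The main technical obstacle is the commutator estimate for $R_a$: the naive $L^2$--$L^2$ duality produces a factor $r^{-1}$ that exactly cancels the Lipschitz gain $r$, yielding no decay, and one is forced to exploit the strictly subcritical two-dimensional embedding $H^1\hookrightarrow L^p$ for $p>2$ in order to extract a genuine $r^{1-2/p}$ vanishing rate. Case~(a) is a simplification in which only the Whitney extension of $\bm{\theta}$ across $\gamma_0$ and a direct zero-extension of $\bm{\eta}$ are required.
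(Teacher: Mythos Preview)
Your proposal is essentially correct and follows the same broad strategy as the paper: obtain strict confinement via the convex scaling $(1-\delta)\bm{\eta}$, extend by reflection across $\gamma\setminus\gamma_0$ and by zero across $\gamma_0$, mollify, and finally enforce the boundary conditions on $\gamma_0$ by a cutoff whose $H^1$-cost is controlled by a Hardy/Poincar\'e estimate on thin strips. Two differences of execution are worth noting. First, the paper precedes everything by an $L^\infty$-truncation of $\bm{\eta}$, which makes the commutator you call $R_a$ controllable by $\|\hat\eta_i\|_{L^\infty}$ times a modulus of continuity; you replace that step by the subcritical embedding $H^1(\hat\omega)\hookrightarrow L^p$ combined with Young's inequality to extract the rate $r^{1-2/p}$---a cleaner alternative that saves one approximation layer. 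Second, the paper performs the cutoff near $\gamma_0$ \emph{before} mollification: once the field vanishes on a fixed $\delta(\bm{\eta})$-neighbourhood of $\gamma_0$, mollification with radius $<\delta(\bm{\eta})/2$ preserves that vanishing, so the Hardy-type estimate is applied to a function that genuinely has zero trace. You reverse the order, which means the estimate must be transferred to $\bm{\eta}^{\delta,r}$; this works (via $\|\bm{\eta}^{\delta,r}\|_{L^2(\{\mathrm{dist}<2r\})}\le\|\hat{\bm{\eta}}^\delta\|_{L^2(\{\mathrm{dist}<3r\})}$ and Hardy for the zero-extended $\hat{\bm{\eta}}^\delta$), but you should say so explicitly rather than invoking Hardy for $\bm{\eta}$. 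Finally, be explicit that the ``Hardy-type inequality'' with constant independent of the strip width is exactly where hypothesis~(a) or~(b) on $\omega$ and $\gamma_0$ enters: the paper proves this inequality directly via absolute continuity along segments orthogonal to the straight edges of $\gamma_0$ and explains why a general Lipschitz $\gamma_0$, or $\gamma_0$ a proper subset of an edge, would make the argument fail.
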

\begin{proof}
	The sought set equality will be shown by resorting to a density argument, where smaller and smaller dense subsets of $\bm{U}(\omega)$ with respect to the norm $\|\cdot\|_{\bm{H}^1(\omega)}$ are constructed. That the linearly elastic generalised membrane shell under consideration is of the first kind renders the semi-norm $|\cdot|_\omega^M$ a norm over the space $\bm{V}(\omega)$. Moreover, there exists a constant $C=C(\bm{\theta},\omega,\lambda,\mu)>0$ such that:
	\begin{equation}
		\label{norms}
		|\bm{\eta}|_\omega^M \le C \|\bm{\eta}\|_{\bm{H}^1(\omega)},\quad\textup{ for all }\bm{\eta} \in \bm{V}(\omega).
	\end{equation}
	
	The proof is broken into four parts numbered (i)--(iv).
	
	(i) \emph{The set $\bm{U}_M^{(1)}(\omega):=\bm{U}(\omega) \cap \bm{L}^\infty(\omega)$ is dense in the set $\bm{U}(\omega)$ with respect to the norm $\|\cdot\|_{\bm{H}^1(\omega)}$.}
	
	Let $\bm{\eta}\in\bm{U}(\omega)$ be given. For each integer $k\ge 1$, define the vector field $\bm{\eta}^{(k)}=\eta_i^{(k)}$ in such a way that its components satisfy the following relations:
	\begin{equation*}
		\eta_i^{(k)}(y)=
		\begin{cases}
			\eta_i(y)&, \textup{ for a.a. }y\in\omega \textup{ such that }|\eta_j(y)\bm{a}^j(y)|\le k,\\
			\\
			\dfrac{k\eta_i(y)}{|\eta_j(y)\bm{a}^j(y)|}&, \textup{ for a.a. }y\in\omega \textup{ such that }|\eta_j(y)\bm{a}^j(y)|> k.
		\end{cases}
	\end{equation*}
	
	Therefore, it results $|\eta_i^{(k)}\bm{a}^i|\le k$ if $|\eta_j\bm{a}^j|\le k$, and $|\eta_i^{(k)}\bm{a}^i|\equiv k$ if $|\eta_j\bm{a}^j|> k$.
	An application of the Cauchy-Schwarz inequality then gives
	\begin{equation*}
		|\eta_i^{(k)}|\le |\eta_j^{(k)}\bm{a}^j| |\bm{a}_i| \le k \max_{1\le \ell \le 3}\|\bm{a}_\ell\|_{\bm{\mathcal{C}}^1(\overline{\omega})},\quad\textup{ for a.a. }y\in\omega,
	\end{equation*}
	thus showing that $\eta_i^{(k)} \in L^\infty(\omega)$. If $y\in\omega$ is such that $|\eta_j(y)\bm{a}^j(y)|\le k$, then
	\begin{equation*}
		(\bm{\theta}(y)+\eta_i^{(k)}(y)\bm{a}^i(y))\cdot\bm{q}=(\bm{\theta}(y)+\eta_i(y)\bm{a}^i(y))\cdot\bm{q} \ge 0,
	\end{equation*}
	whereas, if $|\eta_j(y)\bm{a}^j(y)|>k$, then
	\begin{equation*}
		(\bm{\theta}(y)+\eta_i^{(k)}(y)\bm{a}^i(y))\cdot\bm{q}=\left(\bm{\theta}(y)+\dfrac{k}{|\eta_j(y)\bm{a}^j(y)|}\eta_i(y)\bm{a}^i(y)\right)\cdot\bm{q} \ge \left(1-\dfrac{k}{|\eta_j(y)\bm{a}^j(y)|}\right)\bm{\theta}\cdot\bm{q}> 0.
	\end{equation*}
	
	Observe that we can write
	\begin{equation*}
		\eta_i^{(k)}=g^{(k)}(u) \eta_i,
	\end{equation*}
	where $u:=|\eta_j\bm{a}^j|^2=\eta_j\eta_\ell a^{j\ell}$ and the function $g^{(k)}:[0,\infty)\to\mathbb{R}$ is defined by:
	\begin{equation*}
		g^{(k)}(u):=
		\begin{cases}
			1&, \textup{ if }0 \le u \le k^2,\\
			\\
			\dfrac{k}{\sqrt{u}}&, \textup{ if }u>k^2.
		\end{cases}
	\end{equation*}
	
	The function $g^{(k)}$ is continuous and it is differentiable in the classical sense in $(0,k^2) \cup (k^2,\infty)$, with derivative given by:
	\begin{equation*}
		\dfrac{\dd g^{(k)}}{\dd u}(u)=
		\begin{cases}
			0&, \textup{ if }0 < u < k^2,\\
			\\
			\dfrac{-k}{2u^{3/2}}&, \textup{ if }u>k^2.
		\end{cases}
	\end{equation*}
	
	Since $\eta_j \in H^1(\omega)$, the Sobolev embedding theorem in the limiting case (Corollary~9.11 in~\cite{Brez11}) ensures that $\eta_j \in L^r(\omega)$ for all $2\le r <\infty$. An application of the generalised H\"older's inequality (cf., e.g., Exercise~3.7 in~\cite{CanDAp}) thus shows that $u\in W^{1,q}(\omega)$, for any $1\le q <2$. Since 
	\begin{equation*}
		\left|\frac{\dd g^{(k)}}{\dd u}\right|<\frac{1}{2k^2},
	\end{equation*}
	we have obtained that $g^{(k)} \in W^{1,\infty}(0,\infty)$ and, therefore, that $g^{(k)}$ is Lipschitz continuous (cf., e.g., Proposition~8.4 of~\cite{Brez11}). Therefore, we are in a position to apply Theorem~2.2 in~\cite{MarMiz1972}, so as to obtain that:
	\begin{equation*}
		g^{(k)}(u) \in W^{1,q}(\omega),\quad 1<q <2.
	\end{equation*}
	
	Furthermore, the weak derivatives of $g^{(k)}(u)$ are given by the standard chain rule (cf., Theorem~2.1 in~\cite{MarMiz1972}):
	\begin{equation*}
		\partial_\beta(g^{(k)}(u))=
		\begin{cases}
			0&,\textup{ if }0<u<k^2,\\
			\\
			-\dfrac{k}{2u^{3/2}}\partial_\beta u&,\textup{ if }u>k^2.
		\end{cases}
	\end{equation*}
	
	In order to show that $\eta_i^{(k)} \in H^1(\omega)$, we will compute the distributional gradient of $\eta_i^{(k)}$ and show that it is square integrable. To this aim, we proceed by density. In light of Theorem~\ref{DoktorDensity}, we can find a sequence $\{\eta_{i,\ell}\}_{\ell\ge 1} \subset \mathcal{C}^\infty(\overline{\omega})$ such that $\eta_{i,\ell}=0$ on $\gamma_0$ and 
	\begin{equation}
		\label{conv1}
		\eta_{i,\ell}\to\eta_i,\quad\textup{ in }H^1(\omega) \textup{ as }\ell\to\infty.
	\end{equation}
	
	We also have
	\begin{equation*}
		\label{est:1}
		\|g^{(k)}(u)\eta_{j,\ell}-g^{(k)}(u)\eta_j\|_{L^2(\omega)} \le \|g^{(k)}(u)\|_{L^\infty(\omega)}\|\eta_{j,\ell}-\eta_j\|_{L^2(\omega)},
	\end{equation*}
	and we note that, thanks to~\eqref{conv1}, the right-hand side above approaches zero as $\ell\to\infty$. Consequently:
	\begin{equation}
		\label{conv4}
		g^{(k)}(u)\eta_{j,\ell} \to g^{(k)}(u)\eta_j,\quad\textup{ in }L^2(\omega) \textup{ as }\ell\to\infty.
	\end{equation}
	
	By H\"older's inequality, we have the following estimate:
	\begin{equation*}
		\|g^{(k)}(u)\partial_\beta\eta_{i,\ell}-g^{(k)}(u)\partial_\beta\eta_i\|_{L^2(\omega)}
			\le \|g^{(k)}(u)\|_{L^\infty(\omega)}\|\partial_\beta\eta_{i,\ell}-\partial_\beta\eta_i\|_{L^2(\omega)}.
	\end{equation*}
	
	Since the right-hand side above approaches zero as $\ell\to\infty$ thanks to~\eqref{conv1}, it follows that:
	\begin{equation}
		\label{est:3}
		g^{(k)}(u)\partial_\beta\eta_{i,\ell} \to g^{(k)}(u)\partial_\beta\eta_i,\quad\textup{ in }L^2(\omega) \textup{ as }\ell\to\infty.
	\end{equation}
	
	Noting that $\partial_\beta u \in L^q(\omega)$ for any $1<q<2$, we define the positive numbers $r:=3q/(2q-2)$ and $p:=3q/(2q+1)$ and note that $1<p<2$, and we note that
	\begin{equation*}
		\dfrac{1}{p}=\dfrac{1}{q}+\dfrac{1}{r}.
	\end{equation*}
	
	An application of the generalised H\"older's inequality then gives:
	\begin{equation}
		\label{est:4}
		\|\partial_\beta g^{(k)}(u)\eta_{i,\ell}-\partial_\beta g^{(k)}(u)\eta_i\|_{L^p(\omega)} \le \|\partial_\beta g^{(k)}(u)\|_{L^q(\omega)}\|\eta_{i,\ell}-\eta_i\|_{L^r(\omega)}\le \dfrac{\|\partial_\beta u\|_{L^q(\omega)}}{2k^2}\|\eta_{i,\ell}-\eta_i\|_{L^r(\omega)},
	\end{equation}
	and note that the right-hand side approaches zero as $\ell\to\infty$ thanks to~\eqref{conv1}. Consequently,
	\begin{equation}
		\label{conv6}
		\partial_\beta g^{(k)}(u)\eta_{i,\ell} \to \partial_\beta g^{(k)}(u)\eta_i,\quad\textup{ in }L^p(\omega) \textup{ as }\ell\to\infty.
	\end{equation}
	
	In light of~\eqref{conv4}--\eqref{conv6}, we have obtained that
	\begin{equation}
		\label{conv7}
		g^{(k)}(u)\eta_{j,\ell} \to g^{(k)}(u)\eta_j,\quad\textup{ in }W^{1,p}(\omega) \textup{ as }\ell\to\infty.
	\end{equation}
	
	Thanks to~\eqref{conv4}--\eqref{conv7}, we note that $\eta_i^{(k)} \in W^{1,p}(\omega)$ with $1<p<2$. Besides, we can express the weak derivatives $\partial_\beta\eta_i^{(k)}$ by means of the classical chain rule by exploiting the density recalled in Theorem~\ref{DoktorDensity}:
	\begin{equation}
		\label{wdk}
		\partial_\beta \eta_i^{(k)}=\partial_\beta(g^{(k)}(u)) \eta_i+g^{(k)}(u) \partial_\beta \eta_i.
	\end{equation}
	
	In light of~\eqref{conv7}, we can thus assert that:
	\begin{equation}
		\label{tr1}
		g^{(k)}(u) \eta_{i,\ell} \to g^{(k)}(u) \eta_i,\quad\textup{ in }L^p(\gamma_0) \textup{ as }\ell\to\infty.
	\end{equation}
	
	Since $g^{(k)}(u) \in W^{1,q}(\omega)$ for some $1<q<2$, there exists a sequence $\{\psi_n\}_{n\ge 0} \subset \mathcal{C}^\infty(\overline{\omega})$ such that $\psi_n \to g^{(k)}(u)$ in $W^{1,q}(\omega)$ as $n\to\infty$. Therefore:
	\begin{equation}
		\label{tr2}
		\psi_n \eta_{i,\ell} \to g^{(k)}(u) \eta_{i,\ell},\quad\textup{ in }W^{1,q}(\omega) \textup{ as }n\to\infty.
	\end{equation}
	
	Since the functions $\psi_n$ and $\eta_{i,\ell}$ are both smooth and since $\eta_{i,\ell}$ vanishes along $\gamma_0$, it follows that $\psi_n \eta_{i,\ell} = 0$ on $\gamma_0$. Therefore, combining \eqref{tr1} and~\eqref{tr2}, we obtain that $\eta_i^{(k)}=0$ on $\gamma_0$.
	
	Let us now show that $\partial_\beta\eta_i^{(k)} \in L^2(\omega)$. Thanks to the uniform boundedness of $g^{(k)}$, the following estimate holds:
	\begin{equation*}
		\left(\int_{\omega}|g^{(k)}(u) \partial_\beta\eta_i|^2\dd y\right)^{1/2}\le \|\partial_\beta\eta_i\|_{L^2(\omega)}<\infty.
	\end{equation*}
	
	A density argument like the ones exploited beforehand allows us to explicitly compute $\partial_\beta g^{k}(u)$ in terms of the components $\eta_i$ (cf. \cite{CiaMarPie2018}), and to obtain the following estimate:
	\begin{equation*}
		\begin{aligned}
			&\left(\int_{\omega}|\partial_\beta g^{(k)}(u) \eta_i|^2\dd y\right)^{1/2}\le\dfrac{\max_{1\le i \le 3}\left(\|\bm{a}_i\|_{\bm{\mathcal{C}}^0(\overline{\omega})}\right)}{2k}\|\partial_\beta a^{j\ell}\|_{\mathcal{C}^0(\overline{\omega})}\|\eta_j\eta_\ell\|_{L^2(\{|\eta_j\bm{a}^j|>k\})}\\
		&\quad+\max_{1\le i \le 3}\left(\|\bm{a}_i\|_{\bm{\mathcal{C}}^0(\overline{\omega})}\right) \|\bm{a}^j\|_{\bm{\mathcal{C}}^0(\overline{\omega})}\|\partial_\beta \eta_j\|_{L^2(\{|\eta_j\bm{a}^j|>k\})}<\infty.
		\end{aligned}
	\end{equation*}
	
	It remains to show that $\|\bm{\eta}^{(k)}-\bm{\eta}\|_{\bm{H}^1(\omega)} \to 0$ as $k \to \infty$.
	To begin with, we observe that the relations
	\begin{equation*}
		\begin{aligned}
		\eta_i^{(k)} - \eta_i = 0&,\quad \textup{ if } |\eta_j \bm{a}^j| \le k ,\\
		|\eta_i^{(k)} - \eta_i| = \left(1-\frac{k}{|\eta_j \bm{a}^j|}\right) |\eta_i| \le |\eta_i|&,\quad \textup{ if }|\eta_j\bm{a}^j|>k,
		\end{aligned}
	\end{equation*}
	imply that
	\begin{equation*}
		\|\eta_i^{(k)}-\eta_i \|_{L^2(\omega)} \le \|\eta_i\|_{L^2(\{|\eta_j\bm{a}^j|>k\})},
	\end{equation*}
	so that the following convergence holds:
	\begin{equation*}
		\|\eta_i^{(k)}-\eta_i\|_{L^2(\omega)} \to 0,\quad \textup{ as } k \to \infty.
	\end{equation*}
	
	Second, we observe that the definition of the functions $g^{(k)}$, the expressions of the weak derivatives $\partial_\beta \eta_i^{(k)}$, and the estimates established above, altogether show that
	\begin{equation}
		\label{chainrule}
		\partial_\beta \eta_i^{(k)} - \partial_\beta \eta_i = (g^{(k)}(u)-1) \partial_\beta \eta_i +\partial_\beta g^{(k)}(u)\eta_i.
	\end{equation}
	
	Since
	\begin{equation*}
		\|(g^{(k)}(u)-1) \partial_\beta \eta_i\|_{L^2(\omega)}=
		\left\|\left( \frac{k}{|\eta_j\bm{a}^j|}-1\right) \partial_\beta\eta_i\right\|_{L^2(\{|\eta_j\bm{a}^j|>k\})},
	\end{equation*}
	and since, in light of~\eqref{chainrule},
	\begin{equation*}
		\partial_\beta g^{(k)}(u)\eta_i\in L^2(\omega),
	\end{equation*}
	the convergence $\|\partial_\beta \eta_i^{(k)} - \partial_\beta \eta_i\|_{L^2(\omega)}\to 0$ as $k\to\infty$ easily follows.
	
	(ii) \emph{Define the set:}
	\begin{equation*}
		\begin{aligned}
			\bm{U}_M^{(2)}(\omega)&:=\{\bm{\eta}=(\eta_i)\in\bm{U}_M^{(1)}(\omega); \textup{ there exists a number }\delta(\bm{\eta})>0\\
			&\qquad\textup{ for which }\bm{\eta}(y)=\bm{0} \textup{ for a.a. } y\in\omega \textup{ such that }\textup{dist}(y,\gamma_0)<\delta(\bm{\eta})\\
			&\qquad\textup{ and such that }(\bm{\theta}+\eta_i\bm{a}^i)\cdot\bm{q}\ge d\delta(\bm{\eta}) \textup{ a.e. in }\omega\}.
		\end{aligned}
	\end{equation*}
	
	\emph{Then, the set $\bm{U}_M^{(2)}(\omega)$ is dense in the set $\bm{U}_M^{(1)}(\omega)$ with respect to the norm $\|\cdot\|_{\bm{H}^1(\omega)}$.}
	
	Let $\bm{\eta} \in \bm{U}_M^{(1)}(\omega)$ be given. For each integer $k\ge 1$, define the function $f^{(k)}:\overline{\omega}\to\mathbb{R}$ by:
	\begin{equation*}
		f^{(k)}(y):=
		\begin{cases}
			0&,\textup{ if }0\le \textup{dist}(y,\gamma_0)\le \dfrac{1}{k},\\
			\\
			k\textup{dist}(y,\gamma_0)-1&,\textup{ if }\dfrac{1}{k}\le \textup{dist}(y,\gamma_0)\le\dfrac{2}{k},\\
			\\
			1&,\textup{ if } \textup{dist}(y,\gamma_0)>\dfrac{2}{k}.
		\end{cases}
	\end{equation*}
	
	Clearly $f^{(k)} \in \mathcal{C}^0(\overline{\omega})$ and $0\le f^{(k)}(y)\le 1$, for all $y\in\overline{\omega}$. Since the function $\textup{dist}(\cdot,\gamma_0):\overline{\omega} \to \mathbb{R}$ is Lipschitz continuous with Lipschitz constant $L=1$, an application of the Rademacher theorem (cf., e.g., Theorem~9.2-2 in~\cite{Ciarlet2025}) gives:
	\begin{equation*}
		\left|\partial_\beta\left(\textup{dist}(\cdot,\gamma_0)\right)\right|\le 1,\quad\textup{ a.e. in }\omega.
	\end{equation*}
	
	As a result
	\begin{equation*}
		\partial_\beta f^{(k)}(y)=
		\begin{cases}
			0&,\textup{ if }0< \textup{dist}(y,\gamma_0)< \dfrac{1}{k},\\
			\\
			\le k&,\textup{ if }\dfrac{1}{k}< \textup{dist}(y,\gamma_0)<\dfrac{2}{k},\\
			\\
			0&,\textup{ if }\textup{dist}(y,\gamma_0)>\dfrac{2}{k},
		\end{cases}
	\end{equation*}
	which in turn implies that $f^{(k)}\in W^{1,\infty}(\omega)$ for each integer $k\ge 1$. Since the given vector field $\bm{\eta}=(\eta_i)$ is such that $\eta_i\in H^1(\omega)\cap L^\infty(\omega)$, we can apply the chain rule formula in Sobolev spaces (cf., e.g., Proposition~9.4 in~\cite{Brez11}) so as to infer that the functions
	\begin{equation*}
		\eta_i^{(k)}:=\left(1-\dfrac{1}{k}\right)f^{(k)}\eta_i \in H^1(\omega).
	\end{equation*}
	
	For each $s>0$, define the following set:
	\begin{equation*}
		\omega_s:=\{y\in\omega;\,\textup{dist}(y,\gamma_0)>s\}.
	\end{equation*}
	
	Clearly $\eta_i^{(k)}=0$ a.e. in $\omega\setminus\overline{\omega_{1/k}}$ and hence $\eta_i^{(k)}=0$ on $\gamma_0$ \emph{a fortiori}.
	It is also easy to show that, for a.a. $y\in\omega$, the geometrical constraint is valid:
	\begin{equation*}
		\begin{aligned}
			&(\bm{\theta}(y)+\eta_i^{(k)}(y)\bm{a}^{i}(y))\cdot\bm{q}=\dfrac{1}{k}\bm{\theta}(y)\cdot\bm{q}+\left(1-\dfrac{1}{k}\right)\left(1-f^{(k)}(y)\right)\bm{\theta}(y)\cdot\bm{q}\\
			&\qquad+\left(1-\dfrac{1}{k}\right)f^{(k)}(y)(\bm{\theta}(y)+\eta_i(y)\bm{a}^i(y))\cdot\bm{q}\ge \dfrac{d}{k}>0.
		\end{aligned}
	\end{equation*}
	
	Therefore, we have shown that $\bm{\eta}^{(k)} \in \bm{U}_M^{(2)}(\omega)$, for each integer $k\ge 1$. Let us now consider the difference
	\begin{equation*}
		\bm{\eta}^{(k)}-\bm{\eta}=
		\begin{cases}
			-\bm{\eta}&, \textup{ in }\omega\setminus\overline{\omega_{1/k}},\\
			\\
			\left[\left(1-\dfrac{1}{k}\right)f^{(k)}-1\right]\bm{\eta}&, \textup{ in }\omega_{1/k}\setminus\overline{\omega_{2/k}},\\
			\\
			-\dfrac{1}{k}\bm{\eta}&, \textup{ in }\omega_{2/k},
		\end{cases}
	\end{equation*}
	and let us observe that the following estimates hold:
	\begin{equation*}
		\begin{aligned}
			&\|\bm{\eta}-\bm{\eta}^{(k)}\|_{\bm{L}^2(\omega)}=\|\bm{\eta}\|_{\bm{L}^2(\omega\setminus\overline{\omega_{1/k}})}+\left\|\left[\left(1-\dfrac{1}{k}\right)f^{(k)}-1\right]\bm{\eta}\right\|_{\bm{L}^2(\omega_{1/k}\setminus\overline{\omega_{2/k}})}+\dfrac{1}{k}\|\bm{\eta}\|_{\bm{L}^2(\omega_{2/k})}\\
			&\le \|\bm{\eta}\|_{\bm{L}^2(\omega\setminus\overline{\omega_{1/k}})}+2\left(\int_{\omega}|\bm{\eta}|^2 \chi_{\omega_{1/k}\setminus\overline{\omega_{2/k}}}\dd y\right)^{1/2} +\dfrac{1}{k}\|\bm{\eta}\|_{\bm{L}^2(\omega_{2/k})}.
		\end{aligned}
	\end{equation*}
	
	Observe that the right-hand side of the previous estimate tends to zero as $k\to\infty$; indeed, the convergence of the third term is straightforward and the convergence of the first two terms follows directly from the dominated convergence theorem. This shows that $\bm{\eta}^{(k)} \to \bm{\eta}$ in $\bm{L}^2(\omega)$ as $k\to\infty$.
	Then, the functions
	\begin{equation*}
		\partial_\beta\eta_i^{(k)}-\partial_\beta\eta_i=
		\begin{cases}
			-\partial_\beta\eta_i&, \textup{ in }\omega\setminus\overline{\omega_{1/k}},\\
			\\
			\left[\left(1-\dfrac{1}{k}\right)f^{(k)}-1\right]\partial_\beta\eta_i+\left(1-\dfrac{1}{k}\right)(\partial_\beta f^{(k)}) \eta_i&, \textup{ in }\omega_{1/k}\setminus\overline{\omega_{2/k}},\\
			\\
			-\dfrac{1}{k}\partial_\beta\eta_i&, \textup{ in }\omega_{2/k},
		\end{cases}
	\end{equation*}
	satisfy
	\begin{equation}
		\label{est:5}
		\begin{aligned}
			&\|\partial_\beta\eta_i^{(k)}-\partial_\beta\eta_i\|_{L^2(\omega)}\le\|\partial_\beta\eta_i\|_{L^2(\omega\setminus\overline{\omega_{1/k}})}+\dfrac{1}{k}\|\partial_\beta\eta_i\|_{L^2(\omega_{2/k})}\\
			&\qquad+\left\|\left(\left(1-\dfrac{1}{k}\right)f^{(k)}-1\right)\partial_\beta\eta_i\right\|_{L^2(\omega_{1/k}\setminus\overline{\omega_{2/k}})}\\
			&\qquad+\left\|\left(1-\dfrac{1}{k}\right)(\partial_\beta f^{(k)})\eta_i\right\|_{L^2(\omega_{1/k}\setminus\overline{\omega_{2/k}})}\\
			&\le \|\partial_\beta\eta_i\|_{L^2(\omega\setminus\overline{\omega_{1/k}})}+\dfrac{1}{k}\|\partial_\beta\eta_i\|_{L^2(\omega_{2/k})}+2\|\partial_\beta\eta_i\|_{L^2(\omega_{1/k}\setminus\overline{\omega_{2/k}})}+k\|\eta_i\|_{L^2(\omega_{1/k}\setminus\overline{\omega_{2/k}})}.
		\end{aligned}
	\end{equation}
	
	It is clear that the first three terms in the right-hand side of~\eqref{est:5} converge to zero as $k\to\infty$ by the dominated convergence theorem.
	The analysis of the last term is more involving and hinges on the \emph{geometrical assumptions} on the set $\omega$ set forth in the statement of this theorem.
	
	In what follows, we limit ourselves to considering case~(b), i.e., the case where $\omega$ is a rectangle and $\gamma_0$ is either an edge of the rectangle or contains the union of two parallel edges. We also assume that the origin is placed at one of the vertices of $\gamma_0$. We observe that, for each integer $k\ge 1$, the set $\omega\setminus\overline{\omega_{1/k}}$ is the union of rectangles such that one of their edges is one of the edges constituting the portion of the boundary $\gamma_0$ where the boundary conditions of place are imposed. Examples of admissible configurations for the sets $\omega$ and $\omega\setminus\overline{\omega_{2/k}}$ are depicted in Figure~\ref{fig:1} below.
	\begin{figure}[H]
		\captionsetup[subfigure]{justification=centering}
		\centering
		\begin{subfigure}[b]{0.4\linewidth}
			\centering
			\includegraphics[width=0.4\textwidth]{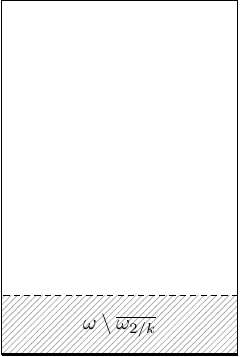}
			\subcaption{The set $\gamma_0$ consists of one edge.}
		\end{subfigure}%
		\begin{subfigure}[b]{0.4\linewidth}
			\centering
			\includegraphics[width=0.4\textwidth]{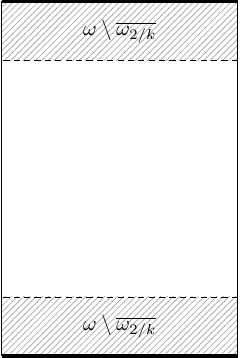}
			\subcaption{The set $\gamma_0$ is the union of two parallel edges.}
		\end{subfigure}%
	\end{figure}
	\begin{figure}[H]
		\ContinuedFloat
		\captionsetup[subfigure]{justification=centering}
			\centering
		\begin{subfigure}[b]{0.4\linewidth}
			\centering
			\includegraphics[width=0.4\textwidth]{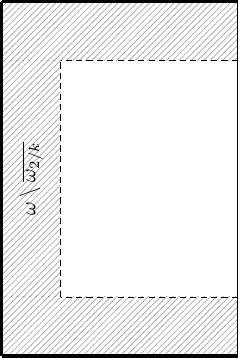}
			\subcaption{The set $\gamma_0$ is the union of three edges.}
		\end{subfigure}%
		\begin{subfigure}[b]{0.4\linewidth}
			\centering
			\includegraphics[width=0.4\textwidth]{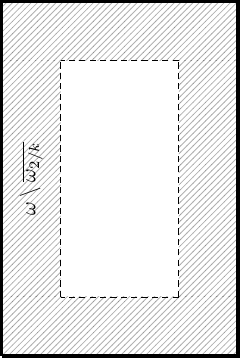}
			\subcaption{The set $\gamma_0$ coincides with $\gamma$.}
		\end{subfigure}%
		\caption{The sets $\omega$ and $\omega\setminus\overline{\omega_{2/k}}$ for different instances of $\gamma_0$, which is denoted here by a thick black line. In~(a), the set $\gamma_0$ coincides with one edge of the rectangular domain, in~(b) the set $\gamma_0$ is the union of two parallel edges, in~(c) the set $\gamma_0$ contains the union of two parallel edges and, finally, in~(d), the set $\gamma_0$ coincides with the entire boundary $\gamma$.}
		\label{fig:1}
	\end{figure}
	
	The assumption on the geometry of the domain will turn out to be essential for establishing the following inequality:
	\begin{equation}
		\label{Poincare}
		k\|\xi\|_{L^2(\omega\setminus\overline{\omega_{2/k}})} \le C\|\xi\|_{H^1(\omega\setminus\overline{\omega_{2/k}})},
	\end{equation}
	for all $\xi\in H^1(\omega)$ such that $\xi=0$ on $\gamma_0$, for some $C>0$ independent of $k$. To establish inequality~\eqref{Poincare}, we carry out some of the computations necessary to establish the Poincar\'e-Friedrichs inequality (cf., e.g., Theorem~8.3-3 in~\cite{Ciarlet2025}). For the sake of clarity, we present the main steps for establishing the sought inequality~\eqref{Poincare}.
	Fix $y_0\in\gamma_0$, and let $y\in\omega\setminus\overline{\omega_{2/k}}$ be such that $y=y_0-c\nu(y_0)$, for some $c>0$.
	To fix the ideas, we consider the strip of width $2/k$ in Figure~\ref{fig:1}(a).
	The absolute continuity along the lines (cf., e.g., \cite{Leoni2017}) enjoyed by the function $\xi$ and the fact that $\xi(y_0)=0$ give:
	\begin{equation*}
		\xi(y)=\int_{y_{0,2}}^{y_2} \partial_2\xi(y_1,t) \dd t,\quad\textup{ for a.a. } y_1\in(0,\textup{length }\gamma_0).
	\end{equation*}
	
	Consequently, we have that:
	\begin{equation*}
		\begin{aligned}
		&|\xi(y)|^2\le \left(\int_{y_{0,2}}^{y_2} |\partial_2\xi(y_1,t)| \dd t\right)^2\le(y_2-y_{0,2})\int_{y_{0,2}}^{y_2} |\partial_2\xi(y_1,t)|^2 \dd t\\
		&\le (y_2-y_{0,2})\int_{y_{0,2}}^{y_{0,2}-\frac{2}{k}\nu_2(y_0)} |\partial_2\xi(y_1,t)|^2 \dd t,
		\end{aligned}
	\end{equation*}
	where the second estimate is due to H\"older's inequality.
	The latter estimate in turn implies that
	\begin{equation*}
		\int_{y_{0,2}}^{y_{0,2}-\frac{2}{k}\nu_2(y_0)} |\xi(y)|^2 \dd y_2 \le \dfrac{2}{k^2} \int_{y_{0,2}}^{y_{0,2}-\frac{2}{k}\nu_2(y_0)} |\partial_2\xi(y_1,t)|^2 \dd t,\quad\textup{ for a.a. }y_1\in(0,\textup{length }\gamma_0),
	\end{equation*}
	so that it is immediate to establish the following estimate:
	\begin{equation*}
		\begin{aligned}
		&\int_{0}^{\textup{length }\gamma_0} \int_{y_{0,2}}^{y_{0,2}-\frac{2}{k}\nu_2(y_0)} |\xi(y)|^2 \dd y_2 \dd y_1 \le \dfrac{2}{k^2}\int_{0}^{\textup{length }\gamma_0} \int_{y_{0,2}}^{y_{0,2}-\frac{2}{k}\nu_2(y_0)} |\partial_2\xi(y_1,t)|^2 \dd t\\
		&= \dfrac{2}{k^2} \|\partial_2\xi\|_{L^2(\omega\setminus\overline{\omega_{2/k}})}^2.
		\end{aligned}
	\end{equation*}
	
	Therefore, the claimed inequality~\eqref{Poincare} holds with $C:=\sqrt{2}$. We are now in a position to analyse the last term in the right-hand side of~\eqref{est:5}:
	\begin{equation*}
		k\|\eta_i\|_{L^2(\omega\setminus\overline{\omega_{2/k}})}\le \sqrt{2}\|\eta_i\|_{H^1(\omega\setminus\overline{\omega_{2/k}})},
	\end{equation*}
	and we observe that the right-hand side of the latter term converges to zero as $k\to\infty$ by the dominated convergence theorem, thus proving that $\partial_\beta\eta_i^{(k)} \to\partial_\beta\eta_i$ in $L^2(\omega)$ as $k\to\infty$. Therefore, the sought density holds.
	
	The case corresponding to Figures~\ref{fig:1}(b)--(d) can be treated analogously, as well as the case where the domain has smooth boundary and the boundary condition of place is imposed along the entire boundary $\gamma$.
	
	(iii) \emph{Denote by $\mathcal{C}_{\gamma_0}^\infty(\overline{\omega})$ the space of continuously differentiable functions that vanish in a neighbourhood of $\gamma_0$.
		Then, the set $\bm{U}_M^{(3)}(\omega):=\bm{\mathcal{C}}_{\gamma_0}^\infty(\overline{\omega}) \cap \bm{U}(\omega)$ is dense in the set $\bm{U}_M^{(2)}(\omega)$ with respect to the norm $\|\cdot\|_{\bm{H}^1(\omega)}$.}
	
	Given two domains $\tilde{\Omega}\subset\mathbb{R}^N$ and $\Omega\subset\mathbb{R}^N$, $N \ge 1$, the notation $\tilde{\Omega}\subset\subset\Omega$ means that $\tilde{\Omega}\subset\Omega$ \emph{and} that $\textup{dist}(\partial\tilde{\Omega},\partial\Omega)>0$.
	
	To fix ideas, let us once again consider the case where $\omega$ is a rectangle and the boundary conditions under consideration are as in Figure~\ref{fig:1}(a), and let $\bm{\eta}\in\bm{U}_M^{(2)}(\omega)$ be given. Let $k>2/\delta(\bm{\eta})$ be an integer, where $\delta(\bm{\eta})$ is the positive number appearing in the definition of $\bm{U}_M^{(2)}(\omega)$. Let $\rho_{1/k}$ be a standard mollifier (cf., e.g., \cite{Brez11}) such that $\textup{supp }\rho_{1/k} \subset\subset B(0;1/k)$. Following the ideas leading to Figure~6 on page~275 of~\cite{Brez11}, we extend the original domain $\omega$ by reflection along the edges, so as to obtain a new rectangular domain $\hat{\omega}$ like the one depicted in Figure~\ref{fig:2} below.
	
	\begin{figure}[H]
		\captionsetup[subfigure]{justification=centering}
		\centering
		\begin{subfigure}[t]{0.3\linewidth}
			\centering
			\includegraphics[width=0.55\textwidth]{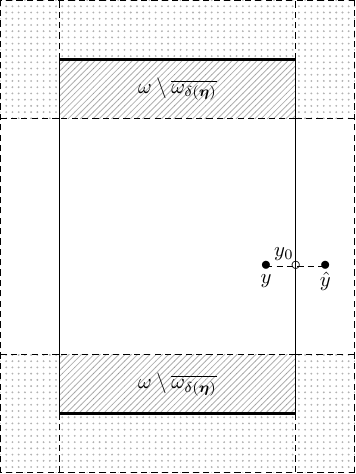}
			\subcaption{Extension in the case where the set $\gamma_0$ is the union of two parallel edges.}
		\end{subfigure}%
		\hspace{0.5cm}
		\begin{subfigure}[t]{0.3\linewidth}
			\centering
			\includegraphics[width=0.55\textwidth]{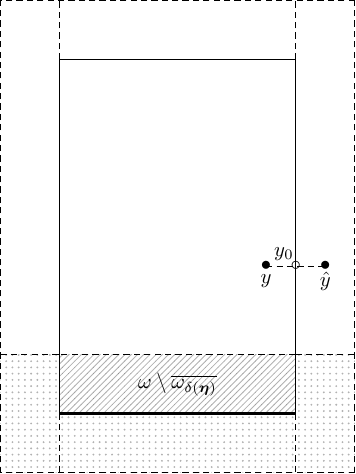}
			\subcaption{Extension in the case where the set $\gamma_0$ consists of one edge.}
		\end{subfigure}%
		\hspace{0.5cm}
		\begin{subfigure}[t]{0.3\linewidth}
			\centering
			\includegraphics[width=0.55\textwidth]{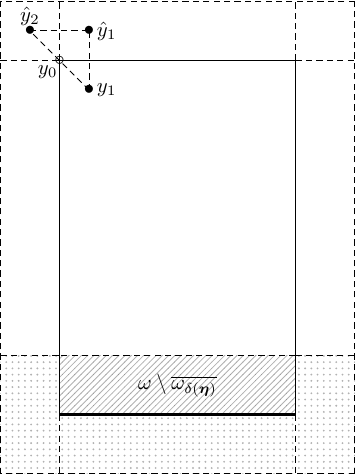}
			\subcaption{Extension in the case where the set $\gamma_0$ consists of one edge. Reflection in proximity of a corner of $\omega$.}
		\end{subfigure}%
		\caption{The set $\hat{\omega}$ is an extension by reflection of the original domain $\omega$. First, we reflect the strips of width $\delta(\bm{\eta})$ across the edge of $\gamma_0$ it is adjacent to. Second, we reflect the strips of width $\delta(\bm{\eta})$ across the edges of the intermediary extension that contain the portion of boundary $\gamma\setminus\gamma_0$. The hatched patterns denote the portion of $\omega$ where the given function $\bm{\eta}\in\bm{U}_M^{(2)}(\omega)$ vanishes. The dotted pattern denotes the points in the domain extension where the extension of $\bm{\eta}$ by reflection vanishes (cf., e.g., Theorem~9.7 in~\cite{Brez11}). The point $\hat{y}$, which is represented by a black dot, is the antipodal point corresponding to $y\in \omega$ with respect to $y_0\in\gamma$, denoted by a hollow dot. This means that there exists a number $c>0$ such that $\hat{y}=y+2c\nu(y_0)$. In the case illustrated in~(a), where the set $\gamma_0$ is the union of two parallel edges, the extension vanishes near the corners of $\omega$ and the extension by reflection of the contravariant basis with respect to the remaining edges is performed in the manner described at the beginning of section~\ref{Sec:5}. If the set $\gamma_0$ consists of one edge only, then the extension by reflection of the contravariant basis is performed in the usual manner when $y_0$ is not a corner point~(b). Otherwise, the extension by reflection of the contravariant basis is carried out with respect to two points $y_1$ and $\hat{y}_2$ that are antipodal with respect to the corner point $y_0$ under consideration~(c). The latter is equivalent to considering the extension by reflection of the contravariant basis at the point $y_1$ to the point $\hat{y}_1$ first, and then considering the extension by reflection of the contravariant basis at the point $\hat{y}_1$ to the point $\hat{y}_2$.}
		\label{fig:2}
	\end{figure}
	
	After multiplying the extension by reflection (cf. Section~9.2 of~\cite{Brez11}) of $\bm{\eta}$ to $\hat{\omega}$ (cf., e.g., Theorem~9.7 of~\cite{Brez11}) by an appropriate cut-off function which is equal to 1 in a set $\omega_1$ such that $\omega \subset\subset\omega_1 \subset\subset\hat{\omega}$, we infer that the regularisation of $\eta_i$, which are denoted and defined by
	\begin{equation*}
		\eta_i^{(k)}:=\rho_{1/k} \ast \eta_i,
	\end{equation*}
	are of class $\mathcal{C}^\infty(\overline{\omega})$ and satisfy $\eta_i^{(k)}=0$ in $\omega\setminus\overline{\omega_{\delta(\bm{\eta})/2}}$. Moreover, the regularisation theory (cf., e.g., \cite{Brez11}) gives that:
	\begin{equation*}
		\eta_i^{(k)} \to \eta_i,\quad\textup{ in }H^1(\omega)\textup{ as }k\to\infty.
	\end{equation*}
	
	To complete the proof, we need to verify that $\bm{\eta}^{(k)}$ satisfies the geometrical constraint ensuring that the deformed reference configuration of the shell is contained in the prescribed half-space. The case where $\textup{dist}(y,\gamma)>\delta(\bm{\eta})/2$ is straightforward to treat, as the support of the mollifier $\rho_{1/k}$ does not intersect the complement of $\omega$. To complete the proof, let us then consider the case where $y\in \omega$ such that $\textup{dist}(y,\gamma)\le\delta(\bm{\eta})/2$ and, in the same spirit as Figure~\ref{fig:2}, we consider the point $\hat{y}$ that is antipodal to $y$ with respect to the edge of $\gamma$ whose distance from $y$ is minimal. 
	
	The definition of $\bm{U}_M^{(2)}(\omega)$, the assumed properties for $\hat{\bm{\theta}}$, assumption~\eqref{continuity}, the property of the reflection and the fact that $\eta_i$ extends by zero in the vicinity of the edges where the boundary condition of place is imposed give:
	\begin{equation}
		\label{est:6}
		\begin{aligned}
			&\left(\bm{\theta}(y)+\eta_i^{(k)}(y)\bm{a}^i(y)\right)\cdot\bm{q}
			=\bm{\theta}(y)\cdot\bm{q}+\left(\int_{B(0;1/k)} \rho_{1/k}(z) \hat{\eta}_i(y-z) \dd z\right)\bm{a}^i(y)\cdot\bm{q}\\
			&=\int_{B(0;1/k)}\rho_{1/k}(z)\left(\hat{\bm{\theta}}(y-z)+\hat{\eta}_i(y-z)\hat{\bm{a}}^i(y-z)\right) \cdot\bm{q}\dd z\\
			&\quad+\int_{B(0;1/k)}\rho_{1/k}(z)(\hat{\bm{\theta}}(y)-\hat{\bm{\theta}}(y-z))\cdot\bm{q}\dd z
			+\int_{B(0;1/k)}\rho_{1/k}(z)\hat{\eta}_i(y-z)(\hat{\bm{a}}^i(y)-\hat{\bm{a}}^i(y-z))\cdot\bm{q}\dd z\\
			&\ge d\delta(\bm{\eta})-\max_{z\in B(0;1/k)}|\hat{\bm{\theta}}(y)-\hat{\bm{\theta}}(y-z)|-\|\hat{\eta}_i\|_{L^2(\mathbb{R}^2)}\left(\max_{\substack{1\le i\le3\\z\in B(0;1/k)}}\left|\hat{\bm{a}}^i(y)\cdot\bm{q}-\hat{\bm{a}}^i(y-z)\cdot\bm{q}\right|\right).
		\end{aligned}
	\end{equation}
	
	The continuity of $\hat{\bm{\theta}}$ and the continuity property assumed in~\eqref{continuity} in turn show that the second and third term in the right-hand side of~\eqref{est:6} tend to zero as $k\to\infty$. As a result, for $k$ sufficiently large, we obtain that:
	\begin{equation*}
		\left(\bm{\theta}(y)+\eta_i^{(k)}\bm{a}^i(y)\right)\cdot\bm{q}\ge 0,
	\end{equation*}
	so that $\bm{\eta}^{(k)} \in \bm{U}_M^{(3)}(\omega)$ by the arbitrariness of $y\in\omega$.
	
	(iv) \emph{Conclusion of the proof.}
	
	In parts~(i)-(iii), we have shown that, for a given $\bm{\eta}\in \bm{U}(\omega)$, there exists a sequence $\left\{\bm{\eta}^{(k)}\right\}_{k\ge 1} \subset \bm{\mathcal{C}}^\infty_{\gamma_0}(\overline{\omega}) \cap\bm{U}(\omega)$ such that:
	\begin{equation*}
		\bm{\eta}^{(k)} \to \bm{\eta},\quad\textup{ in }\bm{H}^1(\omega) \textup{ as }k\to\infty.
	\end{equation*}
	
	Thanks to the inequality~\eqref{norms}, we have that $\left|\bm{\eta}^{(k)}-\bm{\eta}\right|_\omega^M \to 0$, as $k\to\infty$.
	Therefore, we observe that for every $\bm{\eta}^\sharp \in \bm{U}_M^\sharp(\omega)$ and for every $\varepsilon>0$ there exists, by definition, an element $\bm{\eta}\in \bm{U}(\omega)$ such that $|\bm{\eta}^\sharp-\bm{\eta}|_\omega^M <\varepsilon/2$. Up to \emph{ad hoc} isometry (cf., e.g. Theorems~1.12-4 and~3.1-2 in~\cite{Ciarlet2025}), inequality~\eqref{norms} gives that in correspondence of the fixed $\varepsilon$ and of the element $\bm{\eta}\in\bm{U}(\omega)$, we can find another element $\tilde{\bm{\eta}}\in \bm{\mathcal{C}}^\infty_{\gamma_0}(\overline{\omega}) \cap \bm{U}(\omega)$ such that $|\bm{\eta}-\tilde{\bm{\eta}}|_\omega^M <\varepsilon/2$. In conclusion, we have that for every $\bm{\eta}^\sharp \in \bm{U}_M^\sharp(\omega)$ and for every $\varepsilon>0$ there exists $\tilde{\bm{\eta}}\in \bm{\mathcal{C}}^\infty_{\gamma_0}(\overline{\omega}) \cap \bm{U}(\omega)$ such that $|\bm{\eta}^\sharp-\tilde{\bm{\eta}}|_\omega^M <\varepsilon$, thus showing that:
	\begin{equation*}
		\overline{\bm{\mathcal{C}}^\infty_{\gamma_0}(\overline{\omega}) \cap \bm{U}(\omega)}^{|\cdot|_\omega^M}=\bm{U}_M^\sharp(\omega).
	\end{equation*}
	
	Therefore, the announced equality $\bm{U}_M^\sharp(\omega)=\bm{U}^\sharp(\omega)$ immediately follows by observing that the iterates constructed in~(iii) vanish near $\gamma_0$, thus satisfying a homogeneous boundary condition of Neumann type along $\gamma_0$.
\end{proof}

We now comment the assumption of \emph{geometrical nature} set forth in the statement of Theorem~\ref{th:density}.

To begin with, let us now consider the assumption on the shape of the domain $\omega$ and of the portion $\gamma_0$ of the boundary $\gamma$ where the boundary conditions of place are imposed. Observe that if $\gamma$ is smooth and $\textup{meas}(\gamma\setminus\gamma_0) >0$, then the set $\omega\setminus\overline{\omega_{2/k}}$ considered in step~(ii) of the proof would include a portion of the set $\gamma\setminus\gamma_0$. By doing so, it is not possible to carry out the calculations leading to the inequality~\eqref{Poincare} since the function under consideration does not vanish on $\gamma \setminus \gamma_0$.

Second, assuming that $\omega$ is a rectangle and $\gamma_0$ is the union of its edges allows us to apply the argument based on the absolute continuity along the lines for establishing part~(ii) of Theorem~\ref{th:density}. For general polygons and for instances of $\gamma_0$ that cover only a portion of an edge, these computations cannot be carried out.

Third, and finally, the fact that the linearly elastic generalised membrane shells we are considering are of the first kind renders the mapping $|\cdot|_\omega^M$ a norm on $\bm{V}(\omega)$, thus ensuring the claimed density.

A first example of middle surface satisfying the assumptions critical to establish Theorem~\ref{th:density} is, for instance, a portion of a spherical cap such that the portion of contour where the boundary conditions of place are imposed lies on a plane that is parallel to the plane normal to the \emph{given} unit-vector $\bm{q}$. For $\bm{q}=(0,0,1)$, a parametrisation for one such surface is given by
\begin{equation*}
	y=(y_1,y_2) \in [0,\pi] \times \left[0.1,c\dfrac{\pi}{2}\right] \mapsto \bm{\theta}(y):=\left(\cos y_1\sin y_2, \sin y_1 \sin y_2, \cos y_2\right),\quad\textup{ where }0<c<1,
\end{equation*}
and the portion $\gamma_0$ of the boundary $\gamma$ where the boundary conditions are imposed is given by $\gamma_0:=[0,\pi]\times \{0.1\}$.

A second example of middle surface satisfying the assumptions critical to establish Theorem~\ref{th:density} is, for instance, a portion of a cylinder whose straight edges lie on the same plane, and this plane is parallel to plane orthogonal to the \emph{given} unit-vector $\bm{q}$. For $\bm{q}=(0,0,1)$, a parametrisation for one such surface is given by
\begin{equation*}
	y=(y_1,y_2) \in [0.1,\pi-0.1] \times [0,2] \mapsto \bm{\theta}(y):=\left(\cos y_1,y_2,\sin y_1\right),
\end{equation*}
and the portion $\gamma_0$ of the boundary $\gamma$ where the boundary conditions are imposed is given by $\gamma_0:=[0.1,\pi-0.1]\times \{-2\}$.

A third example of middle surface satisfying the assumptions critical to establish Theorem~\ref{th:density} is, for instance, a portion of a one-sheet hyperboloid of revolution whose boundary is the union of two circles, and these circles lie on two distinct planes which are both either orthogonal to the \emph{given} unit-vector $\bm{q}$ or they are orthogonal to the orthogonal complement of the unit-vector $\bm{q}$.
For $\bm{q}=(0,1,0)$, a parametrisation for one such surface is given by
\begin{equation*}
	y=(y_1,y_2) \in [0.1,\pi-0.1] \times [-2,2] \mapsto \bm{\theta}(y):=\left(\sqrt{1+y_2^2} \cos y_1,\sqrt{1+y_2^2} \sin y_1, y_2\right),
\end{equation*}
and the portion $\gamma_0$ of the boundary $\gamma$ where the boundary conditions are imposed is given by $\gamma_0:=(\{0\} \times [-2,2]) \cup (\{\pi-0.1\}\times [-2,2])$.

Despite the assumptions of geometrical nature required to bring the proof of Theorem~\ref{th:density} to a completion, the surfaces satisfying these assumptions are non-trivial in number and in type.

\begin{figure}[H]
	\captionsetup[subfigure]{justification=centering}
	\centering
	\begin{subfigure}[b]{0.3\linewidth}
		\centering
		\includegraphics[width=0.6\textwidth]{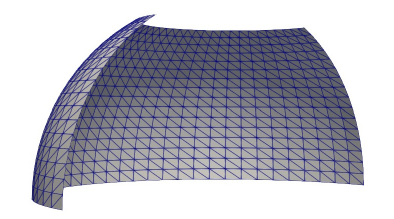}
	\end{subfigure}%
	\begin{subfigure}[b]{0.3\linewidth}
		\centering
		\includegraphics[width=0.6\textwidth]{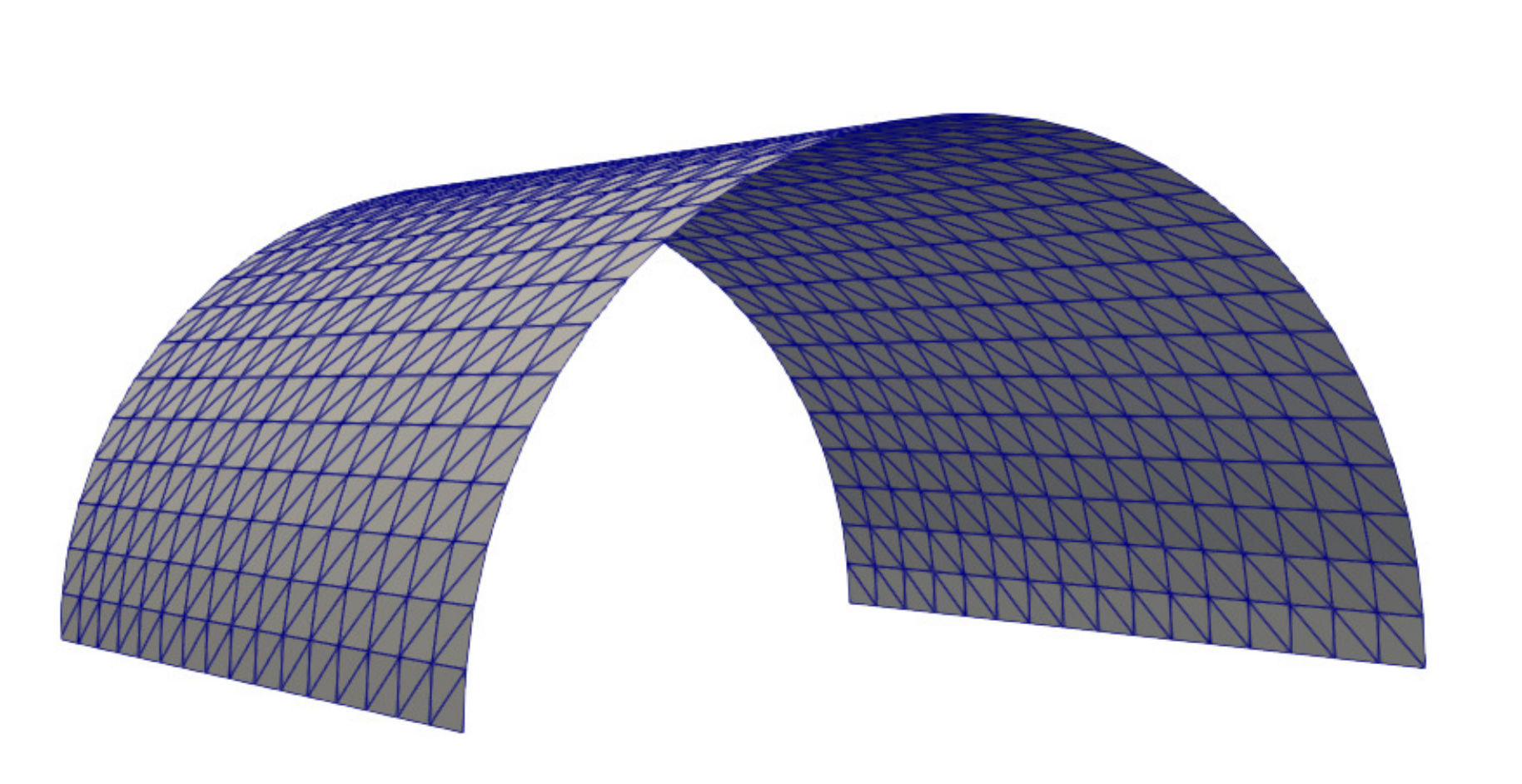}
	\end{subfigure}%
	\begin{subfigure}[b]{0.3\linewidth}
		\centering
		\includegraphics[width=0.6\textwidth]{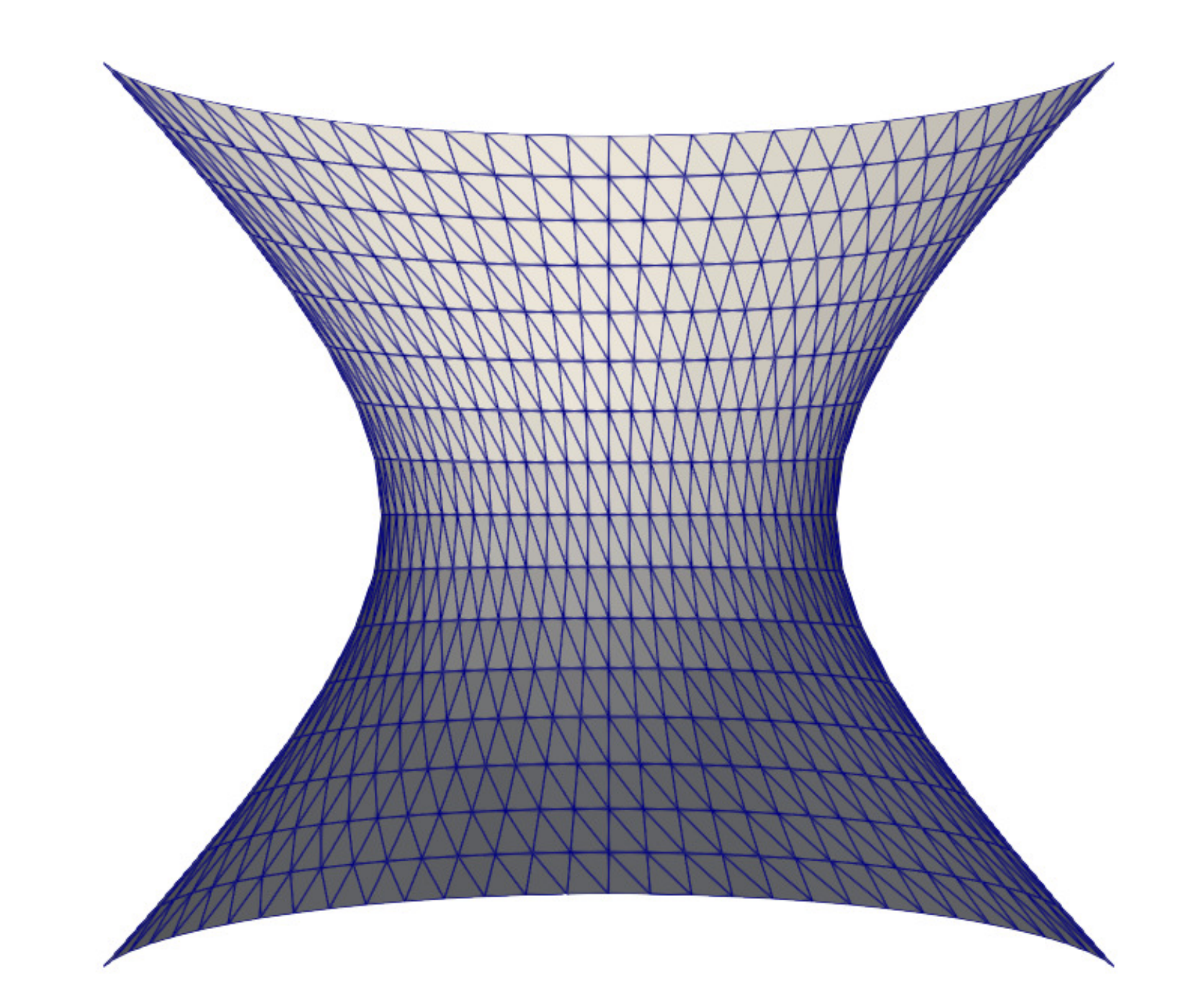}
	\end{subfigure}%
\caption{Three examples of surfaces defined over a rectangular domain $\omega \subset\mathbb{R}^2$ to which Theorem~\ref{th:density} can be applied. (a) A portion of a spherical cap such that the contour where the boundary conditions of place are imposed lies on a plane parallel to the orthogonal complement of $\bm{q}=(0,0,1)$; (b) A portion of a cylinder such that the image of the edges of $\gamma$ that are orthogonal to $\gamma_0$ lies on a plane that is parallel to the orthogonal complement of $\bm{q}=(0,0,1)$; (c) A portion of a hyperboloid of revolution such that the symmetry axis director is parallel to $\bm{q}=(0,1,0)$. In order to apply Theorem~\ref{asymptotics}, we need to require the validity of condition~\eqref{dpcmp}. Note that this is in line with the conclusion obtained for linearly elastic elliptic membrane shells~\cite{CiaMarPie2018}. Assuming the validity of~\eqref{dpcmp} certainly affects the choices for the middle surfaces for which the asymptotic analysis presented in Theorem~\ref{asymptotics} holds.}
\label{fig:3}
\end{figure}

\section*{Conclusion and final remarks}

In this paper we identified a set of two-dimensional variational inequalities that model the displacement of a linearly elastic generalised membrane shell of the first kind subjected to a confinement condition, expressing that all the points of the admissible deformed configurations remain confined in a prescribed half-space.

The starting point of the rigorous asymptotic analysis we carried out is a set of variational inequalities based on the classical energy of three-dimensional linearised elasticity, and posed over a non-empty, closed and convex subset of a suitable Sobolev space.
A rigorous asymptotic analysis departing from one such three-dimensional model led to the identification of a two-dimensional model posed over an abstract completion of a non-empty, closed and convex subset of the space $\bm{H}^1(\omega)$. We observed that the two-dimensional variational inequalities recovered as a result of the asymptotic analysis are the same as the one recovered in~\cite{CiaPie2018bCR,CiaPie2018b}, where the departure point was Koiter's model subjected to the same confinement condition as in this paper.

However, the sets where the solutions for the two-dimensional limit model are not the same, in the sense that the asymptotic analysis departing from the three-dimensional model \emph{with obstacle} and the asymptotic analysis departing from Koiter's model \emph{with obstacle} use in their definitions two sets, denoted by $\bm{U}_M^\sharp(\omega)$ and $\bm{U}^\sharp(\omega)$, respectively, which in general might not coincide.

The result announced in Theorem~\ref{th:density} establishes that, under suitable assumptions on the geometry of the linearly elastic generalised membrane shell of the first kind under consideration, these sets actually \emph{do coincide}, and we may thus regard the justification of Koiter's model for linearly elastic generalised membrane shells of the first kind subjected to remaining confined in a prescribed half-space as complete.

Finally, in connection with the results presented in~\cite{CiaPie2018b,CiaPie2018bCR,CiaMarPie2018b,CiaMarPie2018}, we observe that if the shell middle surface is elliptic and $\gamma_0=\gamma$, then the linearly elastic shell under consideration is a linearly elastic elliptic membrane shell so that, thanks to Korn's inequality for elliptic surfaces (cf., e.g., Theorem~2.7-3 in~\cite{Ciarlet2000}), it follows that $\bm{U}_M^\sharp(\omega)=\bm{U}_M(\omega)=\{\bm{\eta}=(\eta_i)\in H^1_0(\omega)\times H^1_0(\omega) \times L^2(\omega); (\bm{\theta}+\eta_i \bm{a}^i)\cdot\bm{q}\ge 0 \textup{ a.e. in }\omega\}$, provided that $\min_{y\in\overline{\omega}}(\bm{\theta}(y)\cdot\bm{q})>0$ and that $\min_{y\in\overline{\omega}}(\bm{a}^3(y)\cdot\bm{q})>0$. Note that, by contrast with the case of linearly elastic elliptic membrane shells, the condition $\min_{y\in\overline{\omega}}(\bm{a}^3(y)\cdot\bm{q})>0$, denoted here by~\eqref{dpcmp}, has to be assumed for carrying out the asymptotic analysis presented in Theorem~\ref{asymptotics} while, in~\cite{CiaMarPie2018b,CiaMarPie2018}, this condition has to be assumed to establish the validity of the ``density property''.
The most significant outcome of the alternative approach employing~\eqref{cc-new} is that the two-dimensional limit model obtained through a rigorous asymptotic analysis is exactly the \emph{same} as the model derived under the Kirchhoff-Love assumptions in Theorem~\ref{asymptotics}. This convergence of results from two distinct mathematical pathways strongly corroborates the genuineness and robustness of our main findings.
Additionally, note that Theorem~\ref{th:density} must still be established in the proposed simplified framework, underlining its importance in the justification of Koiter's model for linearly elastic generalised membrane shells of the first kind subjected to remaining confined in a prescribed half-space.

\section*{Declarations}

\subsection*{Authors' Contribution}

Not applicable.

\subsection*{Acknowledgements}

The Author is extremely grateful to the Anonymous Referees for the helpful comments and the suggested improvements.

\subsection*{Ethical Approval}

Not applicable.

\subsection*{Availability of Supporting Data}

Not applicable.

\subsection*{Competing Interests}

All authors certify that they have no affiliations with or involvement in any organisation or entity with any competing interests in the subject matter or materials discussed in this manuscript.

\subsection*{Funding}

The research of P.P. is currently being supported by the following agencies:
\begin{itemize}
	\item Natural Science Foundation of China (NSFC), grant number W2533011;
	\item ZJ Talent Program of the Guangdong Province, grant number 2024QN11X057;
	\item Peacock Grant - Type~C of the Shenzhen Municipality.
\end{itemize}
	
	\bibliographystyle{abbrvnat} 
	\bibliography{references.bib}	
	
\end{document}